\definecolor{green(munsell)}{rgb}{0.0, 0.66, 0.47}
\definecolor{BlueGreenn}{rgb}{0.3,0.5,0.8}
\definecolor{DB}{rgb}{0.3,0.3,0.3}
\definecolor{DOr}{rgb}{0.7,0.3,0.3}
\definecolor{DGr}{rgb}{0.3,0.7,0.3}
\definecolor{DBl}{rgb}{0.1,0.3,0.5}
\definecolor{arylideyellow}{rgb}{0.91, 0.84, 0.42}
\definecolor{burntorange}{rgb}{0.8, 0.33, 0.0}
\definecolor{chromeyellow}{rgb}{1.0, 0.65, 0.0}
\numberwithin{equation}{section}
\renewcommand{\phi}{\varphi}
\newtheorem{theorem}{Theorem}[section]
\newtheorem{proposition}[theorem]{Proposition}
\newtheorem{lemma}[theorem]{Lemma}
\newtheorem{corollary}[theorem]{Corollary}
\newtheorem{definition}[theorem]{Definition}
\newcommand{\ot}{\otimes}
\newcommand{\tp}[1]{^{\otimes #1}}    
\renewcommand{\op}{\oplus}
\newcommand{\scp}[2]{\langle #1, #2 \rangle}
\DeclareMathOperator{\Aut}{Aut}
\DeclareMathOperator{\Ad}{Ad}
\DeclareMathOperator{\dom}{dom}
\DeclareMathOperator{\sgn}{sgn}
\newcommand{\Cl}{\mathbb{C}}
\newcommand{\Rl}{\mathbb{R}}
\newcommand{\Nl}{\mathbb{N}}
\newcommand{\Zl}{\mathbb{Z}}
\newcommand{\Zlt}{\mathbb{Z}_2}
\newcommand{\bom}{\boldsymbol{\omega}}
\newcommand{\bomcan}{\boldsymbol{\omega}^{\rm can}}
\newcommand{\bomcanb}{\boldsymbol{\omega}^{\rm can}_\beta}
\newcommand{\bomGibbsb}{\boldsymbol{\omega}^{\rm Gibbs}_\beta}
\newcommand{\balpha}{{\boldsymbol{\alpha}}}
\newcommand{\bA}{\boldsymbol{\A}}
\newcommand{\bunit}{\boldsymbol{1}}
\newcommand{\States}{{\mathcal S}}
\newcommand{\KMS}{{\States}_\beta}
\newcommand{\CO}{{\mathcal O}}
\newcommand{\CC}{{\mathcal C}}
\newcommand{\Ham}{H}
\newcommand{\id}{\mathbbm{1}}
\newcommand{\M}{\mathfrak{M}}
\newcommand{\B}{\mathcal{B}}
\newcommand{\T}{\mathcal{T}}
\newcommand{\Hil}{\mathcal{H}}
\newcommand{\Fock}{\mathcal{F_-(\Hil)}}
\newcommand{\A}{\mathcal{A}}
\newcommand{\Zen}{\mathcal{Z}}
\newcommand{\Om}{\Omega}
\newcommand{\om}{\omega}
\newcommand{\la}{\lambda}
\newcommand{\eps}{\varepsilon}
\newcommand{\te}{\theta}
\newcommand{\CAR}{{\rm CAR}(\Hil)}
\newcommand{\CARDyn}{{\rm{CAR}}(\Hil, U)}
\newcommand{\CARDyng}{({\rm{CAR}}(\Hil), \alpha^H, \gamma_G)}
\newcommand{\CP}{{\mathcal P}}
\newif\ifshow 
\title{KMS states on $\mathbb{Z}_2$-crossed products \linebreak and twisted KMS functionals}
\author{Ricardo Correa da Silva}
\address{Department Mathematik, FAU Erlangen-Nürnberg, ricardo.correa.silva@fau.de}
\author{Johannes Große}
\address{Department Mathematik, FAU Erlangen-Nürnberg, joh.grosse@fau.de}
\author{Gandalf Lechner}
\address{Department Mathematik, FAU Erlangen-Nürnberg, gandalf.lechner@fau.de}
\date{March 14, 2024.}
\begin{document}

\begin{abstract}
	KMS states on $\mathbb{Z}_2$-crossed products of unital $C^*$-algebras~$\mathcal{A}$ are characterized in terms of KMS states and twisted KMS functionals of $\A$. These functionals are shown to describe the extensions of KMS states $\omega$ on~$\mathcal{A}$ to the crossed product $\mathcal{A} \rtimes \mathbb{Z}_2$ and can also be characterized by the twisted center of the von Neumann algebra generated by the GNS representation corresponding to $\omega$.
	
	As a particular class of examples, KMS states on $\mathbb{Z}_2$-crossed products of CAR algebras with dynamics and grading given by Bogoliubov automorphisms are analyzed in detail. In this case, one or two extremal KMS states are found depending on a Gibbs type condition involving the odd part of the absolute value of the Hamiltonian.
	
	As an application in mathematical physics, the extended field algebra of the Ising QFT is shown to be a $\mathbb{Z}_2$-crossed product of a CAR algebra which has a unique KMS state.
\end{abstract}

\maketitle

\section{Introduction}

KMS states of $C^*$-dynamical systems provide on the one hand the general formalization of thermal equilibrium states \cite{BratteliRobinson:1997}, and are on the other hand natural generalizations of tracial states that are intimately connected to Tomita-Takesaki modular theory \cite{Takesaki:2003}. Notwithstanding their importance in physics and mathematics, it can be quite difficult to decide about existence and uniqueness questions for KMS states for a given $C^*$-dynamical system $(\B,\alpha)$ consisting of a $C^*$-algebra $\B$ and dynamics (automorphic $\Rl$-action) $\alpha$, i.e. to determine whether the system allows for thermal equilibrium at a given temperature, or to decide whether several pure thermodynamical phases exist.

This is also true when relevant information about a subsystem is given, i.e. when an $\alpha$-invariant $C^*$-subalgebra $\A\subset\B$ and its KMS states are specified: KMS states of $\A$ might extend uniquely, non-uniquely, or not at all to KMS states of $\B$. 

A special situation of interest arises when the inclusion $\A\subset\B$ is given by a crossed product, i.e. when $\B=\A\rtimes_\gamma\Gamma$ is the crossed product of $\A$ by the action $\gamma$ of a discrete group $\Gamma$ (see \cite{Williams:2007} for a general account of $C^*$-crossed products, and \cite{DoplicherKastlerRobinson:1966,ArakiKastlerTakesakiHaag:1977,ArakiEvans:1983} for some early applications in mathematical physics). 
In this context, the natural question is about the extension of a KMS state~$\om$ from $\A$ to its crossed product $\A\rtimes_\gamma \Gamma$. In this situation, the KMS condition is often of no immediate computational advantage. One rather faces a situation in which the dynamics $\alpha$, the action $\gamma$, the structure of the group $\Gamma$ and the structure of the $C^*$-algebra $\A$ interact in a non-trivial manner.

The nature of the extension problem for KMS states depends in particular on how the dynamics is chosen. In this article, we will always start with a dynamics $\alpha$ on~$\A$ which commutes with the action $\gamma$. Then $\alpha$ extends naturally to $\A\rtimes_\gamma\Gamma$ by acting pointwise; this extended action acts trivially on $\Gamma$. From the point of view of physics, this situation amounts to enlarging the observable algebra $\A$ of a system by elements that are invariant under the dynamics, and ask for the equilibrium states of the enlarged system. Every KMS state on the crossed product then restricts to a KMS state on $\A$.

A different and in some sense opposite choice of dynamics has also been studied in the literature: Instead of starting from a dynamics on $\A$, one defines a dynamics on $\B=\A\rtimes_\gamma\Gamma$ from a 1-cocycle of $\Gamma$, which acts non-trivial on $\Gamma$ but trivial on $\A$. This construction has been investigated in particular in the context of groupoids by Neshveyev \cite{Neshveyev:2014}, and generalized to so-called $\alpha$-regular inclusions $\A\subset\B$ by Christensen and Thomsen \cite{ChristensenThomsen:2020}. As the dynamics is trivial on $\A$, KMS states of $\B$ restrict to (special) traces on $\A$. The two papers mentioned above describe the extension problem for traces on $\A$ to KMS states on $\B$, and provide various conditions under which the extension is unique. In the book of Thomsen \cite[Chapt.~7]{Thomsen:2023}, also a dynamics on $\A\rtimes_\gamma\Gamma$ that combines a dynamics on $\A$ with a cocyle is studied, and a method to construct KMS weights on the crossed product starting from KMS weights on $\A$ is presented. We also refer to  \cite{LacaNeshveyev:2003,Ursu:2021,NeshveyevStammeier:2022,LiZhang:2024} for more related recent work, partially restricted to the case of traces rather than general KMS states.

\medskip 

Motivated by the desire to have a criterion for extensions of KMS states to crossed products (w.r.t. the canonically extended dynamics) that can be efficiently checked in relevant examples, we here propose another point of view on the extension problem. We characterize the KMS states on the crossed product in terms of $\gamma$-twisted KMS functionals as they appear by restriction of a KMS state on the crossed product to the fiber over a fixed group element. 
In this article, we restrict ourselves to crossed products that are as simple as possible as far as the group $\Gamma$ is concerned, namely $\Gamma=\Zl_2$. That is, we consider general (unital) $C^*$-algebras $\A$ with dynamics~$\alpha$ and a commuting involutive automorphism $\gamma$ describing the $\Zl_2$-action, and study the extension problem for KMS states from~$\A$ to $\A\rtimes_\gamma\Zl_2$.

In this case, a $\gamma$-twisted (or graded) KMS functional $\rho:\A\to\Cl$ satisfies by definition
\begin{align}
	\rho(a\alpha_{i\beta}(b))=\rho(b\gamma(a)),\qquad a,b\in\A_\alpha,
\end{align}
in standard notation (see Def.~\ref{def:KMStwisted}). Such ``super'' KMS functionals have been studied in the context of supersymmetry \cite{JaffeLesniewskiWisniowski:1989,Kastler:1989,BuchholzLongo:1999,Hillier:2015}. For us, they play the role of an auxiliary object to describe the untwisted KMS states of the crossed product.

Our general analysis (general $C^*$-algebra $\A$) is contained in Section~\ref{section:abstract}. As we show in Section~\ref{section:twistedKMS}, the extensions of a KMS state $\om$ of $\A$ to the crossed product can be fully characterized in terms of the hermitian $\gamma$-twisted KMS functionals dominated by $\om$ (Theorem~\ref{thm:KMSStateCrossedProduct}). Every KMS state $\om$ on $\A$ has a canonical extension to the crossed product (given by the canonical conditional expectation), but this extension may not be unique. In our approach, the twisted functionals $\rho$ encode in how many different ways a KMS state $\om$ on $\A$ extends to a KMS state on $\A\rtimes_\gamma\Zl_2$, with the choice $\rho=0$ corresponding to the canonical extension. 

In Section~\ref{subsec:GNS_Abstract} we focus on the grading $\gamma$ in the GNS representation $\pi$ of $\om$ and the unitary $V$ implementing it. We show that the selfadjoint elements of the unit ball of the twisted center
\begin{align}
	\Zen(\M,V)=\{x\in\M\,:\,Jx^*J=xV\}
\end{align}
of $\M:=\pi(\A)''$ are in bijection with the extensions of $\om$ (Theorem~\ref{thm:TwistedKMSFunctional}). There are essentially two cases to consider: If $\gamma$ is weakly inner (i.e. inner on $\M$), the extensions of $\om$ are labeled by the selfadjoint elements in the unit ball of the center of $\M$. If, on the other hand, $\gamma$ is freely acting (i.e. outer in the factor case) on $\M$, then $\om$ has a unique extension. In general, $\M$ splits into a direct sum of two components corresponding to these cases, as we show via a theorem of Kallman \cite{Kallman:1969} (Theorem~\ref{thm:AbstractGNS_Split}).
As a corollary, we show that an extremal KMS state on $\A$ has at most two extremal extensions to the crossed product (Corollary~\ref{cor:atmost2}). In the tracial case, a related analysis for general discrete groups has recently been carried out by Ursu \cite{Ursu:2021}.

In Section~\ref{section:asymptotic-abelian}, we focus on the dynamics rather than the grading and investigate the case of (graded) asymptotically abelian dynamics which is of interest in many situations in physics. The connection of such dynamics with twisted KMS functionals has previously been studied by Buchholz and Longo \cite{BuchholzLongo:1999}. In line with their work, we show that in the case of (graded) asymptotically abelian dynamics and non-trivial grading, $\om$ has a unique extension (Corollary~\ref{cor:asymptoticallyabelian}).

\medskip

The results described so far provide a satisfactory analysis of the extension problem for KMS states on $\Zl_2$-crossed products in an abstract setting. Complementing this, we show in Section~\ref{section:CAR} how our criterion can be checked efficiently by spectral analysis in concrete situations of interest in physics. To this end, we specialize to a particular type of $C^*$-dynamical system, given by the CAR algebra $\CAR$ over a Hilbert space. We consider dynamics and grading  implemented by Bogoliubov automorphisms, i.e. by a unitary one-parameter group $(e^{itH})_{t\in\Rl}$ on $\Hil$ and a selfadjoint unitary $G$ commuting with it.

In the non-tracial case without zero modes ($\ker H=\{0\}$), we completely determine all twisted KMS functionals and fully understand the KMS states on the crossed products (Section~\ref{section:KMS-CAR}). It turns out that the Gibbs type condition
\begin{align}
 \Tr_{\Hil_{\rm odd}}(e^{-|\beta H_{\rm odd}|})<\infty
\end{align}
(with $\Hil_{\rm odd}$ the eigenspace of $G$ with eigenvalue $-1$, and $H_{\rm odd}=H|_{\Hil_{\rm odd}}$ the $G$-odd part of the Hamiltonian) holds if and only if there is more than one extension of the unique KMS state of $\CAR$ to its crossed product. All extensions are explicitly determined in Theorem~\ref{thm:CrossedProduct_CAR_KMSStates}. We also analyze the simpler tracial case ($H=0$) in which we have a unique tracial extension (Proposition \ref{prop:tracialcase}).

In Section~\ref{section:physics} we explain how our results apply in examples from mathematical physics. In particular, we show that the extended field algebra of the Ising QFT has the structure of a $\Zl_2$-crossed product with a CAR algebra, and has a unique KMS state at each inverse temperature.

\section{KMS states on $\Zl_2$-crossed products of $C^*$-algebras}\label{section:abstract}

\subsection{General setup}\label{section:generalsetup}
In this section we describe the question we are investigating in a general setting. We will work with unital $C^*$-algebras $\A$ throughout, and write $\States(\A)$ for the state space of $\A$.

\begin{definition}
	\label{def:GradedDynamicalSystem}
	A graded $C^*$-dynamical system is a triple $(\A, \alpha, \gamma)$ consisting of a $C^*$-algebra $\A$, a strongly continuous automorphic $\Rl$-action $\alpha$, and an involutive automorphism $\gamma\in\Aut\A$ satisfying
	\begin{equation*}
		\alpha_t \circ \gamma = \gamma \circ \alpha_t, \qquad t \in \Rl.
	\end{equation*}
\end{definition}

When the grading given by $\gamma$ is absent or not relevant, we refer to $(\A,\alpha)$ as a $C^*$-dynamical system. The action $\alpha$ is also called the dynamics of the system, and $\gamma$ is called the grading. As usual, we write $\A_\alpha\subset\A$ for the norm dense ${}^*$-subalgebra of entire analytic elements for $\alpha$.

\begin{definition} \label{def:KMS}
	Let $(\A, \alpha)$ be a $C^*$-dynamical system.	

	 A state $\omega \in \mathcal{S}(\A)$ is called $\alpha$-KMS state at inverse temperature $\beta \in \Rl$, or $(\alpha,\beta)$-KMS state, if 
	\begin{align}
		\om(a\alpha_{i\beta}(b))
		=
		\om(ba),\qquad a,b\in\A_\alpha.
	\end{align}
	We denote by $\mathcal{S}_\beta(\A, \alpha)$ the set of all $(\alpha, \beta)$-KMS states and by $\KMS(\A,\alpha,\gamma)$ the subset of all $\gamma$-invariant (even) $(\alpha,\beta)$-KMS states.
\end{definition}

We note that if the dynamics $\alpha_t={\rm id}_\A$ is trivial and $\beta\in\Rl$ arbitrary, $\KMS(\A,{\rm id}_\A)$ coincides with the set $\T(\A)$ of tracial states on $\A$. Similarly, if $\beta=0$, for any dynamics $\alpha$ we have ${\mathcal S}_0(\A,\alpha)=\T(\A)$. Without loss of generality, we therefore assume from now on without further mentioning $$\beta\neq0.$$

We next describe $\Zl_2$-crossed products to establish our notation. Given a graded $C^*$-dynamical system $(\A,\alpha,\gamma)$, we consider
\begin{align*}
 \bA:=\A\oplus\A
\end{align*}
as a linear space. This space is a unital ${}^*$-algebra with the product, ${}^*$-operation, and unit 
\begin{align}
	(a,b)(a',b')&=(aa'+b\gamma(b'),\,ab'+b\gamma(a')),\\
	(a,b)^*&=(a^*,\gamma(b^*)),\\
	\bunit&=(1,0).
\end{align}
To describe a norm on $\bA$, we map $(a,b)\in\bA$ to 
\begin{equation*}
	\begin{pmatrix}
		a & b \\
		\gamma(b) & \gamma(a) \\
	\end{pmatrix}
	\in M_2(\A)
\end{equation*}
and define $\|(a,b)\|$ as the operator norm of this matrix acting on the Hilbert $\A$-module $\A^2$. Then $\bA$ is a $C^*$-algebra.

Note that we can equivalently view $\bA$ as the algebra of functions $f:\Zl_2\to\A$ with convolution product by associating with $(a,b)\in\bA$ the function $f(0)=a$, $f(1)=b$. Thus $\bA$ coincides with the crossed product 
\begin{align}
 \bA=\A\rtimes_\gamma\Zl_2
\end{align}
of $\A$ by $\Zl_2$ w.r.t. the $\Zl_2$-action given by $\gamma$, and the above constructed norm with the universal norm \cite[Section~2.5]{Williams:2007}. 

Thanks to the assumption that the dynamics commutes with the grading (Def.~\ref{def:GradedDynamicalSystem}), 
\begin{align}
	\balpha_t((a,b))
	:=
	(\alpha_t(a),\alpha_t(b)),\qquad (a,b)\in\bA,
\end{align}
defines a strongly continuous $\Rl$-action on $\bA$ (we omit the easy proof of the strong continuity), with analytic elements $\bA_\balpha=\A_\alpha\oplus\A_\alpha$. 

\bigskip 

We are interested in understanding the KMS states of the $C^*$-dynamical system $(\bA,\balpha)$ and begin with the following simple observation.

\begin{proposition}\label{proposition:canonical}
	Let $(\A,\alpha,\gamma)$ be a graded $C^*$-dynamical system.
	\begin{enumerate}
		\item Any KMS state $\bom\in\KMS(\bA,\balpha)$ of the crossed product restricts to a $\gamma$-invariant KMS state $\om:=\bom|_{\A}\in\KMS(\A,\alpha,\gamma)$ of $\A$. \label{item:Restriction}
		\item For every $\gamma$-invariant KMS state $\om\in\KMS(\A,\alpha,\gamma)$, there exists a KMS state $\bom\in\KMS(\bA,\balpha)$ such that $\bom|_\A=\om$.
	\end{enumerate}	
\end{proposition}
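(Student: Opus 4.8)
For part \eqref{item:Restriction}, I would check the two things that need checking: that $\om := \bom|_\A$ is again a KMS state, and that it is $\gamma$-invariant. The first is immediate since $\A \hookrightarrow \bA$, $a \mapsto (a,0)$, is a unital $*$-homomorphism intertwining $\alpha$ and $\balpha$, and $\bA_\balpha = \A_\alpha \oplus \A_\alpha$ restricts the analytic elements correctly; hence the KMS identity $\bom((a,0)\balpha_{i\beta}((b,0))) = \bom((b,0)(a,0))$ specializes to $\om(a\alpha_{i\beta}(b)) = \om(ba)$. For $\gamma$-invariance, the standard trick is to observe that the generator of the $\Zl_2$-action, the element $u := (0,1) \in \bA$, is a selfadjoint unitary with $u(a,0)u^* = (\gamma(a),0)$, and $u$ is $\balpha$-invariant (since $\gamma$, being an involutive automorphism, has $\alpha_t(\gamma(1)) = \gamma(1)$... more precisely $\balpha_t(u) = u$ because $\alpha_t$ acts trivially on the $\Zl_2$-fiber). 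Then a KMS state is invariant under the modular automorphism group, but more elementarily one uses that $\bom$ is invariant under conjugation by $\balpha$-invariant unitaries: $\bom(u(a,0)u^*) = \bom(u^* u (a,0)) = \bom((a,0))$ by the $\beta$-KMS condition applied to $u$ and $u^*(a,0)$ together with $\balpha_{i\beta}(u) = u$. This gives $\om \circ \gamma = \om$.

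For part \eqref{b}, the plan is to exhibit the canonical extension explicitly. Given $\om \in \KMS(\A,\alpha,\gamma)$, define $\bom : \bA \to \Cl$ by $\bom((a,b)) := \om(a)$; equivalently $\bom = \om \circ E$ where $E : \bA \to \A$, $E((a,b)) = a$, is the canonical conditional expectation onto the fiber over $0 \in \Zl_2$. One checks $\bom$ is positive (since $\bom((a,b)^*(a,b)) = \om(a^*a + \gamma(b)^*\gamma(b)) = \om(a^*a) + \om(b^*b) \geq 0$, using $\om \circ \gamma = \om$) and unital, hence a state. The KMS property is then verified on analytic elements $(a,b), (a',b') \in \A_\alpha \oplus \A_\alpha$ by direct computation: expanding the product formula, $\bom((a,b)\balpha_{i\beta}((a',b'))) = \om(a\alpha_{i\beta}(a') + b\gamma(\alpha_{i\beta}(b')))$, while $\bom((a',b')(a,b)) = \om(a'a + b'\gamma(b))$. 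The first summands match by the KMS property of $\om$; the second requires $\om(b\gamma(\alpha_{i\beta}(b'))) = \om(b'\gamma(b))$, which follows from $[\alpha,\gamma] = 0$ and the KMS property applied to $b$ and $\gamma(b') \in \A_\alpha$, after using $\om \circ \gamma = \om$ once more: $\om(b\,\alpha_{i\beta}(\gamma(b'))) = \om(\gamma(b')\, b) = \om(b'\,\gamma(b))$.

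I do not anticipate a serious obstacle here — both parts are direct verifications. The one point requiring a little care is the use of the hypotheses $\om \circ \gamma = \om$ and $\alpha_t \circ \gamma = \gamma \circ \alpha_t$ (the latter giving $\gamma(\alpha_{i\beta}(b')) = \alpha_{i\beta}(\gamma(b'))$ on analytic elements, by uniqueness of analytic continuation applied to the two entire functions $t \mapsto \gamma(\alpha_t(b'))$ and $t \mapsto \alpha_t(\gamma(b'))$ which agree on $\Rl$) in the cross term; forgetting either breaks the argument. A remark worth including is that this canonical extension is generally not the only one — quantifying the failure of uniqueness in terms of twisted KMS functionals is precisely the content of Theorem~\ref{thm:KMSStateCrossedProduct} below — so the proposition should be read only as an existence statement.
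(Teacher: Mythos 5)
Your proposal is correct and follows essentially the same route as the paper: part a) uses the $\balpha$-invariant selfadjoint unitary $(0,1)$ and the KMS condition to get $\gamma$-invariance, and part b) is exactly the canonical extension $\bom=\om\circ E$ with the cross term handled via $[\alpha,\gamma]=0$ and $\om\circ\gamma=\om$. The only cosmetic slip is labeling the KMS pair as ``$u$ and $u^*(a,0)$'' rather than $u(a,0)$ and $u^*$, but the displayed computation is the right one.
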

\begin{proof}
	a) The dynamics $\balpha$ restricts to $\alpha$ on the subalgebra $\A\simeq \A\op 0\subset\bA$. Thus $\om:=\bom|_\A\in\KMS(\A,\alpha)$. To see that $\om$ is $\gamma$-invariant, we first note that $(0,1)(a,0)(0,1)=(\gamma(a),0)$, $a\in\A$. As $(0,1)$ is $\balpha$-invariant and squares to the identity, the KMS condition implies for any $a\in\A$
	\begin{align*}
		\om(\gamma(a))
		&=
		\bom(\gamma(a),0)
		=
		\bom((0,1)(a,0)\balpha_{i\beta}(0,1))
		=
		\bom(a,0)
		=
		\om(a).
	\end{align*}
	b) The canonical faithful conditional expectation
	\begin{align}\label{eq:E}
		E:\bA\to\A,\qquad (a,b)\mapsto a,
	\end{align}
	satisfies $E\circ\balpha_t=\alpha_t\circ E$, $t\in\Rl$. We consider the state $\bom:=\om\circ E$ on $\bA$, and entire elements $(a_1,a_2),(b_1,b_2)\in\bA_\balpha$. Using that $\gamma$ and $\alpha_t$ commute and that $\om$ is $\gamma$-invariant, we find
	\begin{align*}
		\bom((a_1,a_2)\balpha_{i\beta}(b_1,b_2))
		&=
		\om(a_1\alpha_{i\beta}(b_1)+a_2\alpha_{i\beta}(\gamma(b_2)))
		\\
		&=
		\om(b_1a_1+b_2\gamma(a_2))
		\\
		&=
		\bom((b_1,b_2)\,(a_1,a_2)).
	\end{align*}
	Thus $\bom\in\KMS(\bA,\balpha)$.
\end{proof}

We therefore see that understanding $\KMS(\bA,\balpha)$ amounts to understanding the extensions of $\gamma$-invariant KMS states $\om\in\KMS(\A,\alpha,\gamma)$ to $\balpha$-KMS states on $\bA$. As just shown, any $\om$ has a {\em canonical extension}, denoted
\begin{align}\label{eq:CanonicalExtension}
	\bomcan := \om\circ E\in\KMS(\bA,\balpha).
\end{align}
In general, this extension is however not unique. We explain this non-uniqueness in an example below. To prepare this, we first need a lemma on concretely represented crossed products.

\begin{lemma}\label{lemma:simple}

	Let $\Hil$ be a Hilbert space, $\A\subset\B(\Hil)$ a simple unital $C^*$-algebra, and $G$ a selfadjoint unitary on $\Hil$ satisfying $G\A G=\A$ and $G\not\in\A$. Then the $C^*$-algebra $C^*(\A,G)$ generated by $\A$ and $G$ is isomorphic to $\A\rtimes_{\Ad_G}\Zl_2$.
\end{lemma}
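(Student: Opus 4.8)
The plan is to exhibit an explicit surjective $*$-homomorphism $\Phi\colon \A\rtimes_{\Ad_G}\Zl_2\to C^*(\A,G)$ and then prove injectivity. Since $\A\rtimes_{\Ad_G}\Zl_2$ carries the universal norm, there is by definition a $*$-homomorphism from it onto $C^*(\A,G)\subset\B(\Hil)$ determined by the covariant pair $(\iota,u)$, where $\iota\colon\A\hookrightarrow\B(\Hil)$ is the inclusion and $u\in\{\pm1\}\mapsto u^{\#}$ sends the generator of $\Zl_2$ to $G$; covariance is exactly the relation $G\iota(a)G=\iota(\Ad_G(a))$, which holds by hypothesis. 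Concretely, in the $(a,b)$-notation of the excerpt, $\Phi(a,b)=a+bG$. One checks directly that $\Phi$ respects the product, the $*$-operation and the unit: $\Phi((a,b)(a',b'))=\Phi(aa'+b\Ad_G(b'),\,ab'+b\Ad_G(a'))=aa'+bGb'G+abG+bGa'GG=(a+bG)(a'+b'G)$, using $G^2=1$; similarly $\Phi((a,b)^*)=\Phi(a^*,Ga^*G)$ wait — here $\gamma=\Ad_G$, so $(a,b)^*=(a^*,\Ad_G(b^*))=(a^*,Gb^*G)$, and $\Phi(a^*,Gb^*G)=a^*+Gb^*GG=a^*+Gb^*=(a+bG)^*$. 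Surjectivity is then immediate since the range of $\Phi$ is a $*$-algebra containing $\A$ and $G$, hence dense in $C^*(\A,G)$, and it is closed because a $*$-homomorphism between $C^*$-algebras has closed range.

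The substantive part is injectivity, i.e. that $\Phi$ is isometric. Here I would use that $\A$ is simple. Suppose $\Phi(a,b)=a+bG=0$. Conjugating by $G$ and using $G\A G=\A$ gives $GaG+GbGG=GaG+Gb=0$, i.e. $\Ad_G(a)=-Gb$ as well as $bG=-a$; combining, $a = -bG$ and $\Ad_G(a) = -Gb = -(bG)G \cdot$ hmm, cleaner: from $a=-bG$ we get $b=-aG$, so the element $(a,b)=(a,-aG)$ and moreover $\Ad_G(a)=GaG=-GbGG\cdot$— let me instead argue structurally. The kernel of $\Phi$ is a closed two-sided ideal $\mathcal I$ of $\bA=\A\rtimes_{\Ad_G}\Zl_2$. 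Its intersection $\mathcal I\cap\A$ (identifying $\A\simeq\A\oplus0$) is a closed two-sided ideal of the simple algebra $\A$, hence is either $0$ or all of $\A$; it cannot be all of $\A$ since $\Phi|_\A$ is injective (it is the inclusion). So $\mathcal I\cap\A=0$. Now if $(a,b)\in\mathcal I$, then also $(a,b)^*(a,b)=(a^*a+b^*\cdot\text{--}\,,\,\ldots)\in\mathcal I$; applying the conditional expectation $E$ from \eqref{eq:E}, which maps onto $\A$, I want to conclude $E\big((a,b)^*(a,b)\big)=0$ and hence $(a,b)=0$ by faithfulness of a suitable state. The point is that $E$ restricted to the positive element $(a,b)^*(a,b)$ lands in $\mathcal I\cap\A=0$ — but $E$ is not an ideal-preserving map, so this needs the extra input that $\mathcal I$ is generated by $\mathcal I\cap\A$.

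That last implication — a closed ideal of a $\Zl_2$-crossed product that meets the base trivially must itself be trivial — is the main obstacle, and it is exactly where simplicity of $\A$ (and $G\notin\A$, which makes the crossed product nontrivial, i.e. $2$-dimensional over $\A$) is really used. I would handle it by a direct computation: if $(a,b)\in\mathcal I$ with, say, $\|(a,b)\|=1$, then for every $c,d\in\A$ the element $(c,0)(a,b)(d,0)=(cad,\,ca\,\Ad_G(d)\,)$ wait, $(c,0)(a,b)=(ca,cb)$ and $(ca,cb)(d,0)=(cad,\,ca\cdot 0 + cb\,\Ad_G(d))=(cad,\,cb\,\Ad_G(d))$ lies in $\mathcal I$; by simplicity one can write $1$ as a finite sum $\sum_i c_i a c_i'$ (Kadison's theorem / simplicity), and use this to show that the ``$(a,0)$-component'' of $\mathcal I$ would have to contain $1$ unless $a=0$, forcing first $a=0$ and then, by the same argument applied after multiplying by $(0,1)$ (legitimate since $G$, hence $(0,1)$, is a unitary in the represented algebra — though not in $\A$), $b=0$. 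Alternatively, and perhaps more cleanly, I would invoke that the dual action of $\widehat{\Zl_2}=\Zl_2$ on $\bA$ scales $(a,b)\mapsto(a,-b)$, note that $\ker\Phi$ need not be invariant under this action a priori, but $\ker\Phi + \sigma(\ker\Phi)$ is an invariant ideal whose intersection with $\A$ is still $0$, and then cite that for the $\Zl_2$-crossed product of a simple algebra the only dual-invariant ideals are $0$ and everything — which combined with $\ker\Phi\cap\A=0$ gives $\ker\Phi=0$. I expect the write-up to choose whichever of these is shortest given the conventions already fixed in the paper.
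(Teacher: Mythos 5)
Your construction of $\Phi(a,b)=a+bG$, the check that it is a unital $*$-homomorphism, and surjectivity all match the paper and are fine. The problem is injectivity, which you yourself flag as ``the main obstacle'' but never actually close; both of your proposed routes have a gap exactly there. In the direct route you use simplicity to produce from $(a,b)\in\ker\Phi$, $a\neq 0$, a kernel element with first component $1$, say $(1,b')$, and then assert this ``forces $a=0$'' --- but you never say what is contradicted. It does not contradict $\ker\Phi\cap(\A\oplus 0)=\{0\}$, since $(1,b')$ does not lie in $\A\oplus 0$ unless $b'=0$. The actual contradiction is that $\Phi(1,b')=1+b'G=0$ gives $G=-b'\in\A$; this is precisely where the hypothesis $G\notin\A$ must enter, and your aside that $G\notin\A$ ``makes the crossed product $2$-dimensional over $\A$'' misplaces its role (the crossed product is $2$-dimensional over $\A$ by construction; $G\notin\A$ is what prevents the \emph{represented} algebra $C^*(\A,G)$ from collapsing). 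The dual-action alternative is worse: the assertion that $\ker\Phi+\sigma(\ker\Phi)$ meets $\A$ trivially is unjustified and is essentially the statement to be proved. Indeed $\ker\Phi=\{(a,-aG):a\in\A\cap\A G\}$ and $\sigma(\ker\Phi)=\{(a,aG):a\in\A\cap\A G\}$, so the sum contains $(2a,0)$ for every $a\in\A\cap\A G$; its intersection with $\A\oplus 0$ therefore contains $\A\cap\A G$, and showing that this set is zero is the whole content of injectivity. As written, that route is circular.

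For comparison, the paper closes the gap with a one-line observation that avoids ideals of the crossed product altogether: $(a,b)\in\ker\pi$ means $a=-bG$, so $a$ lies in ${\mathcal I}:=\{a\in\A\,:\,\exists\, b\in\A,\ a=bG\}=\A\cap\A G$, which is a closed two-sided $*$-ideal of $\A$ (using $G\A G=\A$ and that multiplication by the unitary $G$ is isometric, so $\A G$ is closed). If $1\in{\mathcal I}$ then $G\in\A$, contradicting the hypothesis; hence ${\mathcal I}\neq\A$, so ${\mathcal I}=\{0\}$ by simplicity, giving $a=0$ and then $b=0$ since $G$ is unitary. Your first sketch can be repaired to a (longer) variant of this by adding the missing step ``first component $1$ in the kernel forces $G\in\A$,'' but without that step the proof is incomplete.
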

\begin{proof}
	The map $\pi:\A\rtimes_{\Ad_G}\Zl_2\to C^*(\A,G)$, $\pi(a,b):=a+bG$ is a surjective homomorphism. To show that $\pi$ is injective, note that $(a,b)\in\ker\pi$ implies $a\in{\mathcal I}:=\{a\in\A\,:\,(\exists b\in\A\,:\,a=bG) \}$, which is a closed ${}^*$-ideal in $\A$. By assumption, $G\not\in\A$, which implies $1\not\in{\mathcal I}$. Since $\A$ is simple, this yields ${\mathcal I}=\{0\}$; hence $\pi$ is an isomorphism.
\end{proof}

We now give an example of a situation in which a KMS state has more than one extension to the crossed product.

\begin{lemma}\label{lemma:Gibbs}
	Let $\A\subset\B(\Hil)$ and $G$ form a concrete realization of $\A\rtimes_{\Ad_G}\Zl_2$ as in the previous lemma, and let $U(t)=e^{itH}$ be a strongly continuous unitary one-parameter group on $\Hil$ commuting with $G$ such that $\alpha_t:=(\Ad_{U(t)})|_\A$ is a dynamics on $\A$.

	If the Gibbs condition
	\begin{align}\label{eq:Gibbs}
		\Tr_\Hil(e^{-\beta\Ham})<\infty
	\end{align}
	is satisfied for some $\beta$, and 
	\begin{align}\label{eq:Gnot0}
		\Tr_\Hil(e^{-\beta\Ham}G)\neq0,
	\end{align}
	then the KMS state $\om_\beta\in\KMS(\A,\alpha)$ defined by
	\begin{align}
		\om_\beta(a) := 
		\frac{\Tr_\Hil(e^{-\beta\Ham }\,a)}{\Tr_\Hil(e^{-\beta\Ham})}
	\end{align}
	does not have a unique extension to the crossed product $\bA=\A\rtimes_{\Ad_G}\Zl_2$.
\end{lemma}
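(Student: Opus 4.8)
The strategy is to exhibit two distinct $\balpha$-KMS states on $\bA=\A\rtimes_{\Ad_G}\Zl_2$ that both restrict to $\om_\beta$ on $\A$, so that by Proposition~\ref{proposition:canonical} the extension cannot be unique. One of them will be the canonical extension $\bomcan=\om_\beta\circ E$ from \eqref{eq:CanonicalExtension}; the other will be the Gibbs state of the concretely represented crossed product acting on $\Hil$. To get started, note that $\om_\beta$ is in fact $\gamma$-invariant: writing $Z:=\Tr_\Hil(e^{-\beta H})\in(0,\infty)$ and using that $G$ commutes with $H$, hence with $e^{-\beta H}$, we get $\om_\beta(\Ad_G(a))=Z^{-1}\Tr_\Hil(Ge^{-\beta H}Ga)=Z^{-1}\Tr_\Hil(e^{-\beta H}a)=\om_\beta(a)$. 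Hence part~(b) of Proposition~\ref{proposition:canonical} applies and $\bomcan\in\KMS(\bA,\balpha)$.

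For the second extension, I would use Lemma~\ref{lemma:simple} to identify $\bA$ with $C^*(\A,G)\subseteq\B(\Hil)$ via the isomorphism $\pi(a,b)=a+bG$. Since $U(t)$ commutes with $G$, one checks $\pi\circ\balpha_t=\Ad_{U(t)}\circ\pi$, i.e. $\balpha$ corresponds to $(\Ad_{U(t)})|_{C^*(\A,G)}$. As $e^{-\beta H}$ is trace class,
\[
	\bomGibbs(x):=\frac{\Tr_\Hil(e^{-\beta H}x)}{Z},\qquad x\in C^*(\A,G),
\]
is a well-defined state on $C^*(\A,G)\cong\bA$, and it clearly restricts to $\om_\beta$ on $\A$.

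It remains to verify that $\bomGibbs$ is a $\balpha$-KMS state, which is the standard fact that Gibbs states are KMS states. For entire analytic $x,y\in\bA_\balpha$ one has $\balpha_{i\beta}(y)=e^{-\beta H}ye^{\beta H}$ as a bounded operator, so
\begin{align*}
	\bomGibbs(x\,\balpha_{i\beta}(y))
	&=\frac{1}{Z}\Tr_\Hil(e^{-\beta H}xe^{-\beta H}ye^{\beta H})\\
	&=\frac{1}{Z}\Tr_\Hil(xe^{-\beta H}y)
	=\frac{1}{Z}\Tr_\Hil(e^{-\beta H}yx)
	=\bomGibbs(yx),
\end{align*}
where the second equality moves $e^{\beta H}$ cyclically past the trace-class operator $e^{-\beta H}xe^{-\beta H}y$. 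Rigorously this last step is handled by first replacing $e^{\beta H}$ with the bounded operator $e^{\beta H}P_n$, where $P_n$ is the spectral projection of $H$ onto $[-n,n]$ (which commutes with $e^{\pm\beta H}$), cycling it to the front, and letting $n\to\infty$.

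Finally, the two extensions are genuinely different: with $G=\pi(0,1)$ we have $\bomcan(G)=(\om_\beta\circ E)(G)=\om_\beta(0)=0$, whereas $\bomGibbs(G)=Z^{-1}\Tr_\Hil(e^{-\beta H}G)\neq0$ by hypothesis~\eqref{eq:Gnot0}. Thus $\om_\beta$ admits the two distinct KMS extensions $\bomcan$ and $\bomGibbs$ to $\bA$, proving the claim. I expect the only real technical point to be making the trace-cyclicity step above precise on a suitable core of analytic elements; the rest is bookkeeping with Lemma~\ref{lemma:simple} and Proposition~\ref{proposition:canonical}.
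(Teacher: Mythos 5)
Your proposal is correct and takes essentially the same route as the paper: both exhibit the canonical extension $\om_\beta\circ E$ and the Gibbs state of the concretely represented crossed product $C^*(\A,G)\subset\B(\Hil)$ (via Lemma~\ref{lemma:simple}), and distinguish them by their value on $G$ using \eqref{eq:Gnot0}. The only difference is that you spell out the KMS verification for the Gibbs state and the intertwining of $\balpha$ with $\Ad_{U(t)}$, which the paper simply cites as well known.
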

\begin{proof}
	It is well known that the trace class operator $e^{-\beta\Ham}$ defines a KMS state $g_\beta$ at inverse temperature $\beta$ of the $C^*$-dynamical system $(\B(\Hil),\Ad_U)$ given by $g_\beta(a)=\frac{\Tr_\Hil(e^{-\beta\Ham }\,a)}{\Tr_\Hil(e^{-\beta\Ham})}$. This state restricts to $\bomGibbsb:=(g_\beta)|_{\bA}\in\KMS(\bA,\balpha)$ and further to $\om_\beta:=(g_\beta)|_{\A}\in\KMS(\A,\alpha)$. Thus $\bomGibbsb$ is an extension of $\om_\beta$ to the crossed product.
	
	In view of \eqref{eq:Gnot0}, we have $\bomGibbsb(G)\neq0$. Since the canonical extension \eqref{eq:CanonicalExtension} satisfies $\bomcanb(G) = \om_\beta(E(0,1))=0$, we have two different extensions $\bomcanb$ and $\bomGibbsb$ of $\om_\beta$.
\end{proof}

\subsection{Characterization of extensions through twisted KMS-functionals}\label{section:twistedKMS}
The canonical extension $\om\mapsto\bomcan$ provides us with an injective mapping $\KMS(\A,\alpha,\gamma)\to\KMS(\bA,\balpha)$ that is not surjective in general. To obtain a bijective characterization of $\bom\in\KMS(\bA,\balpha)$, we also need to include information about $\rho:=\bom|_{0\oplus\A}$ which is not contained in $\bom|_{\A\oplus0}$. Such a characterization will be given below. The main points are that $\rho$ will be hermitian in the sense $\rho^*(a):=\overline{\rho(a^*)}=\rho(a)$ and dominated by $\om:=\bom|_{\A\oplus0}$, and that $\rho$ satisfies a $\gamma$-twisted version of the KMS condition.

\begin{definition}\label{def:KMStwisted}
	Let $(\A,\alpha)$ be a $C^*$-dynamical system and $\alpha\in\Aut\A$. A functional $\rho$ on $\A$ is called $\gamma$-twisted $\alpha$-KMS functional at inverse temperature $\beta \in \Rl$, or $(\alpha,\beta,\gamma)$-KMS functional, if 
	\begin{align}
		\rho(a\alpha_{i\beta}(b))
		=
		\rho(b\gamma(a)),\qquad a,b\in\A_\alpha.
	\end{align}
	We denote by $\mathcal{F}_\beta(\A, \alpha, \gamma)$ the set of all $\gamma$-twisted $(\alpha, \beta)$-KMS functionals.

	Given a state $\omega \in \mathcal{S}(\A)$, we denote by
	\begin{equation*}
		\mathcal{F}_{\beta}(\A, \alpha, \gamma, \omega) := \{\rho \in \mathcal{F}_\beta(\A, \alpha, \gamma) \: | \: \rho = \rho^* , \,\abs{\rho(a^*b)}^2 \leq \omega(a^*a) \omega(b^*b) \}
	\end{equation*}
	the set of hermitian $\gamma$-twisted KMS functionals dominated by $\omega$.
\end{definition}

Our main interest lies in KMS states, and twisted KMS functionals will only appear as a tool to characterize KMS states on crossed products. We refer to \cite{JaffeLesniewskiWisniowski:1989,Kastler:1989,BuchholzLongo:1999,Hillier:2015} for works more directly concerned with twisted KMS functionals.

Note that the functionals $\rho\in\mathcal{F}_{\beta}(\A, \alpha, \gamma, \omega)$ are typically not positive \cite{BuchholzLongo:1999}, but always continuous because we have
\begin{align}
	|\rho(b)| \leq \sqrt{\om(b^*b)} \leq \|b\|,\qquad b\in\A,\quad\rho\in\mathcal{F}_{\beta}(\A, \alpha, \gamma, \omega)
\end{align}
by the domination property $\abs{\rho(a^*b)}^2 \leq \omega(a^*a) \omega(b^*b)$. This property is equivalent to $\om-\rho$ and $\om+\rho$ being positive, but we will not need this characterization. We also recall that $\gamma$-twisted KMS functionals on a unital graded $C^*$-dynamical system $(\A,\alpha,\gamma)$ are automatically $\gamma$-invariant and $\alpha$-invariant, see \cite{BuchholzLongo:1999}.

With these definitions, the KMS states of the crossed product $\bA$ can be characterized in terms of KMS states and twisted KMS functionals of $\A$.

\begin{theorem} \label{thm:KMSStateCrossedProduct}
Let $(\A, \alpha, \gamma)$ be a graded $C^*$-dynamical system and let $\bA$ be its $\Zl_2$-crossed product. There is a bijection
\begin{equation*}
	\KMS(\bA,\balpha)
	\simeq
	\{ (\omega, \rho) \, | \, \omega \in \mathcal{S}_\beta(\A, \alpha, \gamma), \rho \in \mathcal{F}_{\beta}(\A, \alpha, \gamma, \omega) \}
\end{equation*}
carrying a KMS state $\bom\in\KMS(\bA,\balpha)$ into the pair $(\omega, \rho)$ given by
\begin{align}
	\om=\bom|_{\A\oplus0},\quad \rho=\bom|_{0\oplus\A},
\end{align}
and a pair $(\om,\rho)$ into the KMS state
\begin{align}\label{eq:extension}
	\bom(a,b)=\om(a)+\rho(b),\qquad a,b\in\A.
\end{align}
\end{theorem}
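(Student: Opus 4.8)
The plan is to establish the bijection by checking that the two maps described are mutually inverse and well-defined, with the bulk of the work lying in the backward direction.

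\medskip

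\textbf{Forward direction.} Starting from $\bom\in\KMS(\bA,\balpha)$, set $\om:=\bom|_{\A\oplus0}$ and $\rho:=\bom|_{0\oplus\A}$. By Proposition~\ref{proposition:canonical}\,a) we already know $\om\in\KMS(\A,\alpha,\gamma)$. It remains to verify $\rho\in\mathcal{F}_\beta(\A,\alpha,\gamma,\om)$. For the twisted KMS relation, I would apply the KMS condition for $\bom$ to the pair of analytic elements $(0,a),(0,b)\in\bA_\balpha$: using the product formula $(0,a)(0,b)=(b\gamma(b')|_{\ldots})$ — more precisely $(0,a)(0,b')=(a\gamma(b'),0)$ after inserting the correct multiplication — and comparing with $(0,b')(0,a)=(b'\gamma(a),0)$, the identity $\bom((0,a)\balpha_{i\beta}(0,b))=\bom((0,b)(0,a))$ collapses exactly to $\rho(a\alpha_{i\beta}(\gamma(b)))=\rho(b\gamma(a))$; relabelling $b\mapsto\gamma(b)$ and using $\gamma$-invariance of $\rho$ (or just redoing the bookkeeping) yields the twisted KMS identity of Definition~\ref{def:KMStwisted}. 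Hermiticity $\rho=\rho^*$ follows since $\bom$ is a state hence hermitian, combined with the ${}^*$-operation $(0,a)^*=(0,\gamma(a^*))$ and $\gamma$-invariance. The domination bound $|\rho(a^*b)|^2\le\om(a^*a)\om(b^*b)$ is the Cauchy–Schwarz inequality for the positive semidefinite form $(x,y)\mapsto\bom(x^*y)$ applied to $x=(0,a)$, $y=(b,0)$ — note $x^*y=(0,\gamma(a^*))(b,0)=(0,\gamma(a^*)\gamma(b))=(0,\gamma(a^*b))$ so $\bom(x^*y)=\rho(\gamma(a^*b))=\rho(a^*b)$ — together with $\bom(x^*x)=\om(\gamma(a^*a))=\om(a^*a)$ and $\bom(y^*y)=\om(b^*b)$.

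\medskip

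\textbf{Backward direction.} Given a pair $(\om,\rho)$ with $\om\in\KMS(\A,\alpha,\gamma)$ and $\rho\in\mathcal{F}_\beta(\A,\alpha,\gamma,\om)$, define $\bom(a,b):=\om(a)+\rho(b)$. This is clearly linear, and $\bom(\bunit)=\bom(1,0)=\om(1)=1$. The two substantive points are \emph{positivity} and the \emph{KMS condition}. For positivity, compute $\bom((a,b)^*(a,b))=\bom((a^*,\gamma(b^*))(a,b))=\bom(a^*a+\gamma(b^*)\gamma(b),\,a^*b+\gamma(b^*)\gamma(a))=\om(a^*a)+\om(b^*b)+\rho(a^*b)+\rho(\gamma(b^*a))$; using $\gamma$-invariance of $\rho$ the last term is $\rho(b^*a)=\overline{\rho(a^*b)}$ (by hermiticity), so the expression equals $\om(a^*a)+\om(b^*b)+2\re\rho(a^*b)\ge\om(a^*a)+\om(b^*b)-2|\rho(a^*b)|\ge(\sqrt{\om(a^*a)}-\sqrt{\om(b^*b)})^2\ge0$ by the domination inequality. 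Hence $\bom$ is a state. For the KMS condition, the computation is essentially the one already performed in the proof of Proposition~\ref{proposition:canonical}\,b) with the extra $\rho$-terms carried along: expand $\bom((a_1,a_2)\balpha_{i\beta}(b_1,b_2))$, split into the $\om$-part (which closes by the ordinary KMS condition for $\om$ exactly as before) and the $\rho$-part (the cross terms $\rho(a_1\alpha_{i\beta}(b_2))$ and $\rho(a_2\alpha_{i\beta}(\gamma(b_1)))$), and check these match $\bom((b_1,b_2)(a_1,a_2))$ using the twisted KMS identity for $\rho$ together with its $\gamma$- and $\alpha$-invariance (recalled in the text after Definition~\ref{def:KMStwisted}).

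\medskip

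\textbf{Mutual inverseness and the main obstacle.} Finally I would note the two maps are inverse to each other: starting from $\bom$, forming $(\om,\rho)$, and reassembling gives back $\bom$ because every element of $\bA$ is of the form $(a,0)+(0,b)$ and $\bom$ is linear; conversely $\om(a)+\rho(b)$ restricted to $\A\oplus0$ and $0\oplus\A$ returns $\om$ and $\rho$. The main obstacle is purely the combinatorial bookkeeping in the KMS verification of the backward direction — making sure the $\gamma$'s land in the right places when the crossed-product multiplication is expanded inside an analytic-continuation argument, and that one may legitimately apply the twisted KMS identity (stated only for $\A_\alpha$) after recognizing $\bA_\balpha=\A_\alpha\oplus\A_\alpha$. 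No deep functional-analytic input beyond Cauchy–Schwarz and the stated invariance properties of twisted KMS functionals is needed.
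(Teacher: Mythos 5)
Your backward direction and the Cauchy--Schwarz argument for domination are fine and essentially parallel the paper's proof (which gets hermiticity and domination from positivity of $\bom$ via a $2\times2$ positive semi-definite matrix instead of Cauchy--Schwarz). However, the forward direction contains a genuine error at its key step: to derive the twisted KMS condition for $\rho$ you apply the KMS condition of $\bom$ to the pair of \emph{odd} elements $(0,a),(0,b)$. As you yourself compute, $(0,a)(0,b')=(a\gamma(b'),0)$ and $(0,b')(0,a)=(b'\gamma(a),0)$ lie in the even fiber $\A\oplus 0$, so both sides of the resulting identity are values of $\om$, not of $\rho$: the identity reads $\om(a\,\alpha_{i\beta}(\gamma(b)))=\om(b\gamma(a))$, which is just a reformulation of the ordinary KMS condition for the $\gamma$-invariant state $\om$ and carries no information about $\rho$ whatsoever. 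The conclusion ``collapses exactly to $\rho(a\alpha_{i\beta}(\gamma(b)))=\rho(b\gamma(a))$'' is therefore unjustified. The correct choice is a \emph{mixed} pair, one even and one odd element: applying the KMS condition to $(a',0)$ and $(0,b)$ gives $\bom((a',0)\balpha_{i\beta}(0,b))=\rho(a'\alpha_{i\beta}(b))$ and $\bom((0,b)(a',0))=\rho(b\gamma(a'))$, which is precisely the twisted KMS condition; this is what the paper does by setting $a=b'=0$ in its calculation \eqref{eq:KMScalc}.

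A secondary consequence: in the forward direction you invoke $\gamma$-invariance of $\rho$ (for hermiticity and for identifying $\rho(\gamma(a^*b))=\rho(a^*b)$ in the domination bound) before $\rho$ has been shown to be a twisted KMS functional, so the automatic-invariance remark after Definition~\ref{def:KMStwisted} is not yet available to you. Once the twisted-KMS step is repaired as above this appeal becomes legitimate; alternatively you can prove $\rho\circ\gamma=\rho$ directly by the same trick as in Proposition~\ref{proposition:canonical}\,a), using the $\balpha$-invariant selfadjoint unitary $(0,1)$, since $(0,1)(0,a)(0,1)=(0,\gamma(a))$, or follow the paper's route, which extracts hermiticity and domination from positivity of $\bom$ without needing invariance of $\rho$ at that stage.
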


Before giving the proof, we remark that the sets $\mathcal{S}_{\beta}(\A, \alpha, \gamma)$ and $\mathcal{F}_{\beta}(\A, \alpha, \gamma, \omega)$ are convex and that if $\rho \in \mathcal{F}_{\beta}(\A, \alpha, \gamma, \omega)$, so is the line segment $[- \rho, \rho] = \{\lambda \rho \, | \, \lambda \in [-1,1]\}$. Especially $0 \in \mathcal{F}_{\beta}(\A, \alpha, \gamma, \omega)$ for every KMS state $\omega$, which recovers the canonical extension $\bomcan\cong(\om,0)$. Thus $\mathcal{F}_{\beta}(\A, \alpha, \gamma, \omega)$ is in bijection with all KMS extensions of $\om\in\KMS(\A,\alpha,\gamma)$ to $\bA$.

We also remark that in case $\om\in\mathcal{S}_{\beta}(\A, \alpha, \gamma)$ admits a non-canonical extension $\bom=(\om,\rho)$ given by a twisted functional $\rho\neq0$, $\bomcan$ is not an extremal KMS state because it can be realized as the convex combination
\begin{align}
	\bomcan=\frac12(\om,\rho)+\frac12(\om,-\rho).
\end{align}

\begin{proof}
Take a KMS state $\omega \in \mathcal{S}_{\beta}(\A, \alpha, \gamma)$ and a twisted KMS functional \linebreak $\rho \in \mathcal{F}_{\beta}(\A, \alpha, \gamma, \omega)$. Then $\bom$ \eqref{eq:extension} is clearly linear and $\bom(\bunit)=1$. To show positivity of $\bom$, let $a,b\in\A$. Using the $\gamma$-invariance of $\om$ and $\rho$ and the fact that $\om$ dominates $\rho$, we find
\begin{align}\label{eq:KMSpos}
	\bom((a,b)^*(a,b))
	&=
	\om(a^*a+\gamma(b^*b))+\rho(a^*b+\gamma(b^*a))
	\\
	&\geq \omega(a^*a) + \omega(b^*b)-
	|\rho(a^*b)|-|\rho(b^*a)|
	\nonumber
	\\
	&\geq \omega(a^*a) + \omega(b^*b)
	-
	2\,\omega(a^*a)^{\frac12}\omega(b^*b)^\frac{1}{2}
	\nonumber
	\\
	&=
	\left(\omega(a^*a)^\frac{1}{2}-\omega(b^*b)^\frac{1}{2}\right)^2
	\geq0.
	\nonumber
\end{align}
Therefore $\bom$ is a state on $\bA$.

To verify the KMS condition, let $(a,b),(a',b')\in\bA_{\balpha}=\A_\alpha\oplus\A_\alpha$ be entire analytic for $\balpha$. Then,
\begin{align}\label{eq:KMScalc}
	\bom((a',b')\balpha_{i\beta}(a,b))
	&=
	\om(a'\alpha_{i\beta}(a)+b'\gamma(\alpha_{i\beta}(b)))
	+
	\rho(a'\alpha_{i\beta}(b)+b'\gamma(\alpha_{i\beta}(a)))
	\nonumber
	\\
	&=
	\om(a'\alpha_{i\beta}(a))+\om(\gamma(b')\alpha_{i\beta}(b))
	+
	\rho(a'\alpha_{i\beta}(b))+\rho(\gamma(b')\alpha_{i\beta}(a))
	\nonumber
	\\
	&=
	\om(a a')+\om(b\gamma(b'))
	+
	\rho(b\gamma(a'))+\rho(a b')
	\\
	&=
	\bom((a,b)(a',b')),
	\nonumber
\end{align}
so $\bom$ is a KMS state for $\balpha$.

Conversely, let $\bom \in \KMS(\bA,\balpha)$ be a KMS state. By Lemma~\ref{proposition:canonical}~\ref{item:Restriction}, its restriction $\om:=\bom|_{\A \op 0}$ is a $\gamma$-invariant KMS state for the restricted dynamics $\alpha$.

It is clear that $\rho:=\bom|_{0\oplus\A}$ is a linear functional on $\A$. A look at calculation \eqref{eq:KMScalc} with $a=b'=0$ yields $\rho(a'\alpha_{i\beta}(b))=\rho(b\gamma(a'))$ as a consequence of the KMS property of $\bom$. Thus $\rho$ is a $\gamma$-twisted KMS functional on $\A$.

It remains to show that $\rho$ is hermitian and dominated by $\om$. This is a consequence of the positivity of $\bom$. Indeed, the first line of \eqref{eq:KMSpos} is positive by positivity of $\bom$. As $a,b\in\A$ are arbitrary, this implies that the matrix
\begin{equation*}
M=
\begin{pmatrix}
\omega(a^*a) & \rho(a^*b) \\
\rho(b^*a ) & \omega(b^*b) \\
\end{pmatrix}
\geq0
\end{equation*}
is positive semi-definite. In particular, it is selfadjoint, implying that $\rho$ is hermitian, and
\begin{equation*}
0\leq\det(M) = \omega(a^*a)\omega(b^*b) - \rho(a^*b)\rho(b^*a )
= \omega(a^*a)\omega(b^*b) - |\rho(a^*b)|^2,
\end{equation*}
shows that $\rho$ is dominated by $\omega$.

The operations relating $\bom$ to $(\om,\rho)$ and vice versa are clearly inverses of each other, concluding the proof.
\end{proof}

\medskip 

We mention in passing that various aspects of this analysis generalize to crossed products with more general groups $\Gamma$ as long as a conditional expectation $\A\rtimes\Gamma\to\A$ exists; this is in particular the case for any discrete group. We leave a detailed investigation of such a generalization to a future work.

\subsection{Twisted KMS functionals and the twisted center}
\label{subsec:GNS_Abstract}

In this section, we characterize the set of twisted KMS functionals dominated by a given KMS state of a $C^*$-algebra $\A$ in terms of their associated noncommutative Radon-Nikod\'ym derivatives. 

Given a graded $C^*$-dynamical system $(\A, \alpha, \gamma)$ with a $\gamma$-invariant KMS state~$\omega$, we consider the associated GNS triple $(\Hil, \pi, \Omega)$. It is well known that $\Omega$ is a cyclic and separating vector for the enveloping von Neumann algebra $\M := \pi(\mathcal{A})'' \subset \B(\Hil)$ \cite{BratteliRobinson:1997}. The corresponding modular objects are denoted by $(J,\Delta)$. The $\Zlt$-grading of $\A$ defines a unique unitary $V \in \B(\Hil)$ satisfying
\begin{equation}
V \pi(a) V = \pi(\gamma(a)), \quad V \Omega = \Omega \quad \text{and} \quad V^2 = 1_{\Hil},
\end{equation}
which extends the grading to $\M$. The modular group $(\Delta^{it})_{t \in \Rl}$ implements the (rescaled) dynamics $\alpha$ on the GNS space via
\begin{equation}
	\Delta^{it} \pi(a) \Omega = \pi(\alpha_{-t \beta}(a)) \Omega.
\end{equation}
As $\gamma$ commutes with $\alpha$, the associated unitary $V$ commutes with both the modular conjugation and group. We now shift the focus to the von Neumann algebra $\M$ and introduce some notation.

\begin{definition}\label{def:TwistedCenter}
	Let $\M \subset \B(\Hil)$ be a von Neumann algebra with cyclic and separating vector $\Omega$ and $(J,\Delta)$ the associated modular data. 
	
	A $\Zlt$-grading of $(\M, \Om)$ is an operator $V \in \B(\Hil)$ s.t.
	\begin{equation}
	V= V^* = V^{-1}, \quad \Ad_V : \M \to \M \qand V \Omega = \Omega.
	\end{equation}
	We then call $(\M, \Omega, V)$ a graded $W^*$-dynamical system or simply graded von Neumann algebra. Further, we define the $V$-twisted center of $\M$ by
	\begin{equation}\label{eq:ZenV}
	\Zen(\M, V) := \{x \in \M \, | \, Jx^*J = xV \}.
	\end{equation}
\end{definition}

A small calculation shows that the $\Zlt$-grading $V$ and the Tomita operator $S$ commute on $\M \Om$, so 
$V$ commutes with both the modular conjugation $J$ and the modular group $(\Delta^{it})_{t \in \Rl}$.

Given a graded von Neumann algebra $(\M, \Om, V)$, we denote the center of $\M$ by $\Zen(\M)$ and recall that $\Zen(\M) = \{z \in \M \, | \, Jz^*J = z \}$ which motivates the term $V$-twisted center for \eqref{eq:ZenV}. As usual, $\M^\alpha$ denotes the elements invariant under the modular group, $\M^\gamma$ denotes the $\Ad_V$-invariant elements and correspondingly $\Zen(\M)^\gamma$ the $\Ad_V$-invariant central elements. Furthermore, the centralizer of $(\M, \Om)$ is denoted by $\Zen_\Omega(\M)$ which coincides with $\M^\alpha$ by modular theory, see \cite[Proposition 5.3.28]{BratteliRobinson:1997}.

We now show that the set $\mathcal{F}_{\beta}(\A, \alpha, \gamma, \omega)$ which characterizes the extensions of the KMS state $\omega$ to the crossed product $\bA$ can be identified with the selfadjoint elements of $\Zen(\M, V)$ lying in the unit ball $B_\M$.

\begin{theorem} \label{thm:TwistedKMSFunctional}
	Let $(\A, \alpha, \gamma)$ be a graded C$^*$-dynamical system and \linebreak $\omega \in \mathcal{S}_{\beta}(\A, \alpha, \gamma)$ and $(\M, \Om, V)$ the enveloping graded $W^*$-dynamical system.
	
	Then there exists a bijection
	\begin{align*}
		\Xi:\mathcal{F}_{\beta}(\A, \alpha, \gamma, \om) &\to B_\M\cap \Zen(\M, V)_{s.a.}\\
		 \rho&\mapsto R_\rho
	\end{align*}
	preserving convex combinations, where $R_\rho$ is defined by $ \rho(a) := \scp{\Om}{\pi(a) R_\rho \Om}$.
\end{theorem}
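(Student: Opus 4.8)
The plan is to use the standard noncommutative Radon--Nikod\'ym correspondence for the KMS state $\om$ and then identify which operators arise from twisted KMS functionals. First I would recall that, since $\Om$ is cyclic and separating for $\M$, a hermitian functional $\rho$ on $\A$ dominated by $\om$ corresponds (via the order isomorphism between the predual of $\M$ and functionals dominated by $\om$, together with $\|\cdot\|$-density of $\pi(\A)$) to a unique selfadjoint $R_\rho\in\M$ with $\rho(a)=\scp{\Om}{\pi(a)R_\rho\Om}$; the domination $|\rho(a^*b)|^2\le\om(a^*a)\om(b^*b)$ translates exactly into $\|R_\rho\|\le1$, i.e. $R_\rho\in B_\M$. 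This gives a well-defined injective affine map $\rho\mapsto R_\rho$ from $\mathcal{F}_\beta(\A,\alpha,\gamma,\om)$ into $B_\M$, and I should check it lands in and surjects onto the selfadjoint part of $\Zen(\M,V)$.

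The key step is to translate the two algebraic constraints defining $\mathcal{F}_\beta(\A,\alpha,\gamma,\om)$ — the $\gamma$-twist and the KMS identity — into the single equation $JR^*J=RV$. For this I would use the two implementation facts recalled before the theorem: $V\pi(a)V=\pi(\gamma(a))$, $V\Om=\Om$, and $\Delta^{it}\pi(a)\Om=\pi(\alpha_{-t\beta}(a))\Om$, hence $\Delta^{1/2}\pi(a)\Om=\pi(\alpha_{i\beta/2}(a))\Om$ on analytic elements, together with $S=J\Delta^{1/2}$ and $J\pi(a)\Om = \pi(\alpha_{i\beta/2}(a))^*\Om = \pi(\alpha_{-i\beta/2}(a^*))\Om$ up to the usual care with analyticity. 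Writing $\rho(a)=\scp{\Om}{\pi(a)R\Om}$ and recalling (as noted in the excerpt, following Buchholz--Longo) that a $\gamma$-twisted KMS functional is automatically $\gamma$- and $\alpha$-invariant, I would compute $\rho(b\gamma(a))$ and $\rho(a\alpha_{i\beta}(b))$ for analytic $a,b$, move one factor across $\Om$ using the modular relations, and see that equality for all analytic $a,b$ is equivalent to $\scp{\pi(c)\Om}{R\,\pi(d)\Om}=\scp{\pi(d)\Om}{R^*V\,\pi(c)\Om}$ (or the equivalent form with $J$) for all $c,d$ in a dense set; by cyclicity and separation this is precisely $JR^*J=RV$, i.e. $R\in\Zen(\M,V)$. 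Hermiticity of $\rho$ gives $R=R^*$. Conversely, given selfadjoint $R\in B_\M\cap\Zen(\M,V)$, the formula $\rho(a):=\scp{\Om}{\pi(a)R\Om}$ runs the same computation backwards: the relation $JR^*J=RV$ yields the twisted KMS identity, $R=R^*$ yields hermiticity, and $\|R\|\le1$ yields the domination bound, so $\rho\in\mathcal{F}_\beta(\A,\alpha,\gamma,\om)$. Affinity in both directions is immediate from linearity of $\rho\mapsto R_\rho$.

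The main obstacle I expect is bookkeeping with unbounded modular operators: the cleanest version of the computation moves $\Delta^{1/2}$ (or $J\Delta^{1/2}=S$) past vectors $\pi(a)\Om$, which is only literally valid for $a$ entire analytic for $\alpha$, and one must argue that testing the functional identities on the norm-dense $*$-subalgebra $\A_\alpha$ (equivalently, on the $\sigma$-weakly dense set $\pi(\A_\alpha)\Om$) suffices to deduce the operator equation on all of $\M$ — this uses continuity of $\rho$ (already established via domination) and the Kaplansky-type density of analytic elements. A secondary technical point is verifying carefully that ``$\rho$ dominated by $\om$'' is genuinely equivalent to ``$R_\rho\in B_\M$'' rather than merely implied: the forward direction follows since $|\scp{\Om}{\pi(a)R\pi(b)\Om}|\le\|R\|\,\|\pi(a^*)\Om\|\,\|\pi(b)\Om\|$, while the reverse uses that the sesquilinear form $(a,b)\mapsto\rho(a^*b)$ being dominated by the inner product forces the representing operator to be a contraction, a standard Cauchy--Schwarz argument on $\M\Om$. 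Once these density and boundedness points are handled, the rest is the linear-algebraic translation sketched above, and preservation of convex combinations is automatic.
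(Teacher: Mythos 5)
Your overall strategy (represent $\rho$ by a contraction and translate the twisted KMS condition via modular theory into $JR^*J=RV$) is the right one, but there is a genuine gap in your first step: the claimed preliminary correspondence between hermitian $\om$-dominated functionals and selfadjoint contractions $R\in\M$ with $\rho(a)=\scp{\Om}{\pi(a)R\Om}$ is false for non-tracial $\om$, and the asserted equivalence ``domination $\Leftrightarrow\|R\|\le1$'' fails on both sides as you have set it up. The sesquilinear form $(\pi(a)\Om,\pi(b)\Om)\mapsto\rho(a^*b)$ produced by the domination bound has its representing contraction naturally in the \emph{commutant} $\M'$, not in $\M$: the identity $\rho(a^*cb)=\rho((c^*a)^*b)$ is what forces $[R',\pi(c)]=0$. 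Your forward bound $|\scp{\Om}{\pi(a)R\pi(b)\Om}|\le\|R\|\,\|\pi(a^*)\Om\|\,\|\pi(b)\Om\|$ silently inserts $R$ between $\pi(a)$ and $\pi(b)$, which presupposes that $R$ commutes (or $V$-twisted-commutes) with $\pi(b)$ --- exactly what is not yet known at that stage. Concretely, for $\M=M_2(\Cl)$ with a faithful non-tracial state $\om=\Tr(\sigma\,\cdot)$ in standard form, the functional $a\mapsto\scp{\Om}{\pi(a)R\Om}$ is dominated by $\om$ iff $\|\sigma^{-1/2}R\sigma^{1/2}\|\le1$, which is not equivalent to $\|R\|\le1$; similarly, hermiticity of $\rho$ is equivalent to $R=R^*$ only once one knows $R$ lies in the centralizer. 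Since $\beta\neq0$ and $\om$ is generally non-tracial here, this is not a removable technicality but a circularity: the membership $R\in\M$ with the stated formula and norm bound only becomes available \emph{after} the twisted-center relation is established.

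The repair is exactly the route the paper takes, and it fits your plan with one reordering: first obtain the contraction $R_\rho'\in\M'$ from the bounded form (domination $\Leftrightarrow\|R_\rho'\|\le1$ is correct there), then run your modular computation with $\Delta^{1/2}$ on analytic elements to turn the twisted KMS identity into $R_\rho'=JR_\rho'VJ$, and only then define $R_\rho:=JR_\rho'J\in\M$, which satisfies $JR_\rho J=R_\rho V$, inherits $\|R_\rho\|=\|R_\rho'\|\le1$ and selfadjointness, and yields $\rho(a)=\scp{\Om}{\pi(a)R_\rho\Om}$ using $V\Om=\Om$. Your converse direction is sound as written, provided you justify the domination bound for $R\in\Zen(\M,V)$ via the twisted commutation $\pi(b)R=R\pi(\gamma(b))$ (Lemma \ref{lem:TwistedCenterProperties}~\ref{item:free}) together with the $\gamma$-invariance of $\om$, and your remarks on analytic-element density and affinity are fine.
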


\begin{proof}
	Let $\rho \in \mathcal{F}_{\beta}(\A, \alpha, \gamma, \omega)$ and consider the densely defined sesquilinear form 
	\begin{equation*}
	h_\rho: \pi(\A) \Omega \times \pi(\A) \Omega \to \Cl, \quad (\pi(a)\Omega, \pi(b) \Omega) \mapsto \rho(a^*b)
	\end{equation*}
	on $\Hil$ and note that $h_\rho$ is well-defined since $\rho$ is dominated by $\om$. The sesquilinear form $h_\rho$ is bounded by $1$, since for all $a,b \in \A$,
	\begin{equation*}
		\abs{h_\rho(\pi(a)\Omega, \pi(b)\Omega)} = \rho(a^* b) \leq (\omega(a^*a)\omega(b^*b))^{\frac{1}{2}} = \norm{\pi(a)\Omega}\norm{\pi(b)\Omega}.
	\end{equation*}
	Therefore, $h_\rho$ can be uniquely extended to a continuous sesquilinear form on $\Hil$ also denoted by $h_\rho$. Therefore, there exists a unique operator $R_\rho' \in \B(\Hil)$ satisfying
	\begin{equation*}
	\rho(a^*b) = h_\rho(\pi(a)\Omega,\pi(b)\Omega) = \scp{\pi(a)\Omega}{R_\rho'\pi(b)\Omega},\ \quad \forall a, b \in \A .
	\end{equation*} 

	It follows from the boundedness of $h_\rho$ that $\norm{R_\rho'} \leq 1$. Moreover, $R_\rho' \in \pi(\A)' = \M'$  because
	\begin{align*}
		\scp{\pi(a)\Omega}{\comm{R_\rho'}{\pi(c)}\pi(b)\Omega} = \rho(a^*cb) - \rho((c^*a)^*b) = 0.
	\end{align*}
	It is clear that $\rho$ is hermitian if and only if the operator $R_\rho'$ is selfadjoint.
	
	We rewrite, for $a, b \in \A_\alpha$, both sides of the twisted KMS condition.
	\begin{align*}
		\rho(a^*\alpha_{i\beta}(b)) &= \rho(\alpha_{\frac{-i\beta}{2}}(a^*)\alpha_{\frac{i\beta}{2}}(b)) = \scp{\Delta^\frac{1}{2} \pi(a) \Omega}{R_\rho' \Delta^\frac{1}{2} \pi(b) \Omega} \\
		\rho(b \gamma(a^*)) &= \scp{\pi(b^*) \Om}{R_\rho' V\pi(a^*) \Om} = \scp{ \Delta^\frac{1}{2} \pi(a) \Om}{ J R_\rho' V J\Delta^\frac{1}{2} \pi(b)\Om}
	\end{align*}
	As $\Delta^\frac{1}{2}\pi(\A_\alpha) \Omega$ is dense in $\Hil$, it follows that $R_\rho' = JR_\rho'VJ$. Rewriting this result in terms of $R_\rho := JR_\rho'J \in \M$ yields
	\begin{equation*}
		 JR_\rho J=R_\rho V.
	\end{equation*}
	The defining equation of $R_\rho'$ can be rewritten as
	\begin{equation*}
		\rho(a) = \scp{\Om}{R_\rho'\pi(a) \Om} = \scp{\Om}{\pi(a) JR_\rho J \Om} = \scp{\Om}{\pi(a) R_\rho V \Om} = \scp{\Om}{\pi(a) R_\rho \Om},
	\end{equation*}
	proving that $\Xi$ is well-defined and injective.
	
	The map $\Xi$ clearly preserves convex combinations. Furthermore, by analogous arguments, an operator $R\in B_\M\cap \Zen(\M, V)_{s.a.}$ defines a unique $\om$-dominated $(\alpha, \beta, \gamma)$-KMS functional $\rho_R$ by $\rho_R(a):=\scp{\Om}{\pi(a)R\Om}$.
\end{proof}

Notice that $\mathcal{F}_{\beta}(\A, \alpha, \gamma, \om)$ is compact in the weak-$^*$ topology, $B_\M\cap \Zen(\M, V)_{s.a.}$ is compact in the weak-operator topology, and $\Xi$ is continuous with respect to these topologies. As a consequence of Krein-Milman theorem, both sets are the closed convex hull of their respective extreme points. Furthermore, since $\Xi$ preserves convex combinations, it also gives a bijection between the extreme points of these two sets.

Theorem \ref{thm:TwistedKMSFunctional} now motivates us to study the $V$-twisted center of $\M$. The results for $\Zen(\M, V)$ can then be applied to the enveloping von Neumann algebra $\M$ for a KMS state $\om$ on a $C^*$-algebra $\A$. We collect some properties of the $V$-twisted center here.

\begin{lemma} \label{lem:TwistedCenterProperties}
	Let $(\M, V, \Omega)$ be a graded von Neumann algebra. Then the following statements hold:
	\begin{enumerate}
		\item $\Zen(\M, V)$ is a $^*$-invariant WOT-closed complex subspace of $\M$;\label{item:Subspace}
		\item $\Zen(\M, V) \subset \M^\gamma \cap \M^\alpha$; \label{item:inclusion}
		\item If $x\in \Zen(\M,V)$, then \label{item:free}
		\begin{align}\label{eq:ZenVinner}
			xy=\Ad_V(y)x,\qquad y\in \M;
		\end{align}
		\item $x_1x_2 \in \Zen(\M)^\gamma$ for all $x_1,x_2 \in \Zen(\M, V)$; \label{item:prodtwo}
		\item $xz = zx \in \Zen(\M, V)$ for all $x \in \Zen(\M, V)$ and $z \in \Zen(\M)$; \label{item:prodcentral}
		\item For $x \in \Zen(\M, V)$ with polar decomposition $x = u\abs{x}$ it follows that ${u \in \Zen(\M, V)}$ and $\abs{x} \in \Zen(\M)^\gamma$. \label{item:PolarDecomposition}
	\end{enumerate}

	In particular, if $\Ad_V$ is an inner action on $\M$, i.e. there exists unitary $u \in \M$ with $\Ad_V = \Ad_u$ on $\M$, then
	\begin{equation*}
		\Zen(\M, V) = u \cdot \Zen(\M).
	\end{equation*}
\end{lemma}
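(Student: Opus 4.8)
The plan is to verify the six enumerated properties in order, each being a short computation using the defining relation $Jx^*J=xV$ together with the standard facts that $V$ commutes with $J$ and $\Delta^{it}$, that $J\M J=\M'$, and that $\Zen(\M)=\{z\in\M:Jz^*J=z\}$. For \ref{item:Subspace}, linearity and WOT-closedness are immediate since $x\mapsto Jx^*J$ and $x\mapsto xV$ are WOT-continuous conjugate-linear (resp.\ linear) maps; $*$-invariance follows by applying $J(\cdot)^*J$ to both sides of $Jx^*J=xV$ and using $V=V^*$, $J^2=1$, and $[V,J]=0$ to get $Jx J = x^*V$, i.e.\ $J(x^*)^*J=x^*V$. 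For \ref{item:inclusion}, $\alpha$-invariance ($x\in\M^\alpha$, equivalently $x\in\Zen_\Omega(\M)$) comes from the commutation of the defining relation with $\Delta^{it}$: conjugating $Jx^*J=xV$ by $\Delta^{it}$ and using $[\Delta^{it},J]=[\Delta^{it},V]=1$ shows $\Delta^{it}x\Delta^{-it}$ satisfies the same equation as $x$, but since $\Zen_\Omega(\M)$ and its complement are... more carefully: apply $J(\cdot)^*J$ to the relation twice to derive $x\in\M$ and $JxJ=x^*V\in\M'$, hence $x^*V\in\M\cap\M'$? No — rather combine with \ref{item:free} below. Let me instead prove \ref{item:free} first and bootstrap.

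The cleanest route: establish \ref{item:free} directly, then deduce the rest. For \ref{item:free}, given $x\in\Zen(\M,V)$ and $y\in\M$, note $Jy^*J\in\M'$ so it commutes with $x$; thus $x(Jy^*J)=(Jy^*J)x$, and applying $J(\cdot)J$... actually the standard trick is: for any $z\in\M$, $Jz^*J\in\M'$ commutes with every element of $\M$, and also $J\Omega=\Omega$ with $\Omega$ cyclic/separating gives $Jz^*J\,\Omega = z'\Omega$-type identities. Concretely, for $m\in\M$ one has $m\,Jx^*J\,\Omega = Jx^*J\,m\,\Omega$; using $Jx^*J=xV$ and $V\Omega=\Omega$ this reads $m x\Omega = xV m\Omega = x\Ad_V(m)V\Omega = x\Ad_V(m)\Omega$, and since $\Omega$ is separating for $\M$ and $mx,\ x\Ad_V(m)\in\M$, we get $mx=x\Ad_V(m)$ for all $m\in\M$, which is \eqref{eq:ZenVinner}. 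Then \ref{item:inclusion}: taking $y=x$ in \ref{item:free} and $\Ad_V$-invariance — put $y\in\M'$... hmm, \ref{item:free} gives $\M^\gamma$-membership by choosing... actually $x\in\M^\gamma$ means $VxV=x$; from $Jx^*J=xV$ left-multiply by $V$, use $[V,J]=0$: $J(Vx^*)J \cdot$ — cleaner: $xV = Jx^*J = J(x^*)J$; apply \ref{item:free} with the already-proved $*$-invariance to $x^*$ too. I will write $\M^\gamma\cap\M^\alpha$ containment by: \ref{item:free} with $m=V x^* V\in\M$ combined with the relation yields $VxV=x$; $\alpha$-invariance then follows since $\M^\gamma\ni x$ and conjugating the defining relation by $\Delta^{it}$ shows $\alpha_t(x)\in\Zen(\M,V)$ with the same "$V$", and one checks $\Zen(\M,V)\cap\ker(\text{modular derivative})$... simplest: $x\Omega\in\M\Omega$ and $Jx^*J\Omega=xV\Omega=x\Omega$ means $x\Omega$ is fixed by $J\,$hence (by the characterization of the centralizer via $J$-fixed vectors in $\M\Omega$) $x\in\Zen_\Omega(\M)=\M^\alpha$. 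For \ref{item:prodtwo}: $J(x_1x_2)^*J = Jx_2^*JJx_1^*J = x_2Vx_1V = x_2\Ad_V(x_1)V^2 = x_2\Ad_V(x_1) = x_1x_2$ using \ref{item:inclusion} for $\M^\gamma$ and \ref{item:free}; so $x_1x_2\in\Zen(\M)$, and it is $\gamma$-invariant by \ref{item:inclusion}. \ref{item:prodcentral}: direct from \ref{item:free} with $y=z$ central (so $\Ad_V(z)=z$ by \ref{item:inclusion} applied to... no, $z\in\Zen(\M)$ need not be $\gamma$-invariant — but \ref{item:free} gives $zx=x\Ad_V(z)$; and $\Ad_V(z)\in\Zen(\M)$; to get $xz=zx$ use instead that $x$ commutes with $Jz^*J\in\M'$ and $z\in\M$... this needs a line). \ref{item:PolarDecomposition}: from the relation $Jx^*J=xV$ and uniqueness of polar decomposition, with $x=u|x|$, compute $J x^* J = J|x|J \cdot JuJ$... rather use that $|x|^2=x^*x\in\Zen(\M)^\gamma$ by applying \ref{item:prodtwo} to $x^*,x$ (legitimate once $*$-invariance is known), so $|x|=(x^*x)^{1/2}\in\Zen(\M)^\gamma$; then $u=x|x|^{-1}$ on $\overline{\ran|x|}$ extended by $0$, and since $|x|\in\Zen(\M)$ commutes appropriately, $u\in\Zen(\M,V)$ follows from the relation. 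Finally the inner case: if $\Ad_V=\Ad_u$ with $u\in\M$ unitary, then $Ju^*J=uV$? No — check: for $x=u\cdot z$, $z\in\Zen(\M)$: $J(uz)^*J = Jz^*JJu^*J = z\cdot Ju^*J$, and we'd want this $=uzV=uVz$ (as $z$ central, $\Ad_V(z)=z$... need $z$ $\gamma$-invariant, but actually $uVzV^{-1}u^{-1}=z$ so $VzV=u^{-1}zu=z$, giving $\gamma$-invariance of central elements automatically when $\Ad_V$ inner). So it reduces to showing $Ju^*J=uV$, equivalently $u\in\Zen(\M,V)$; this is the one genuinely substantive point.

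The main obstacle, then, is the final claim $\Zen(\M,V)=u\cdot\Zen(\M)$ when $\Ad_V$ is inner, which hinges on proving $u\in\Zen(\M,V)$, i.e.\ $Ju^*J=uV$, for a suitable choice of implementing unitary $u$. The subtlety is that $u$ is only determined up to a central unitary, and not every implementing $u\in\M$ satisfies the relation; one must exploit $V\Omega=\Omega$ and $[V,J]=0$. I would argue: since $\Ad_u=\Ad_V$ on $\M$, the unitary $w:=Vu^{-1}\in\B(\Hil)$ satisfies $\Ad_w=\mathrm{id}$ on $\M$, hence $w\in\M'$; moreover $[V,J]=0$ and $[u,\text{--}]$ relations force $w$ to interact with $J$ controllably. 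Then $Ju^*J = Jw^*VJ\cdot(\dots)$ — working this through, $Ju^*J\in\M'$ and one computes its action on $\Omega$: $Ju^*J\Omega = Ju^*\Omega$; using that the vector state is $\gamma$-invariant and $u$ implements $\gamma$, one shows $u^*\Omega$ is related to $\Omega$ by $V$, ultimately yielding $Ju^*J\,m\Omega = uVm\Omega$ for all $m\in\M$ and hence $Ju^*J=uV$ after possibly adjusting $u$ by a central phase (which is absorbed into $\Zen(\M)$). Once $u\in\Zen(\M,V)$ is established, $\supseteq$ is \ref{item:prodcentral}, and $\subseteq$ follows because for any $x\in\Zen(\M,V)$, \ref{item:prodtwo} gives $u^*x\in u^*\Zen(\M,V)\subseteq\Zen(\M)^\gamma\subseteq\Zen(\M)$ (as $u^*\in\Zen(\M,V)$), so $x=u(u^*x)\in u\cdot\Zen(\M)$. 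I expect the bookkeeping around the choice of $u$ and the $J$-twist to be where care is needed; everything else is a one- or two-line manipulation of $Jx^*J=xV$.
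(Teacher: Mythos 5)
Your overall route is essentially the paper's: every item is a short manipulation of the defining relation $Jx^*J=xV$ together with $[V,J]=[V,\Delta^{it}]=0$, item \ref{item:free} via the separating vector (the paper does it purely algebraically, $xy=(Jx^*J)Vy=\Ad_V(y)x$, but your version is fine), item \ref{item:inclusion}'s modular invariance via $\Delta^{1/2}x\Om=x\Om$, item \ref{item:PolarDecomposition} via uniqueness of the polar decomposition, and the inner case via ``the implementing unitary lies in $\Zen(\M,V)$''. Two of your micro-steps, however, do not go through as written. First, the operations are swapped: applying $a\mapsto JaJ$ to $Jx^*J=xV$ gives $x^*=(JxJ)V$, i.e. $JxJ=x^*V$, which is exactly $*$-invariance, whereas applying $a\mapsto Ja^*J$ gives $x=VxV$, i.e. the $\M^\gamma$ part of \ref{item:inclusion}; your later proposal to get $VxV=x$ from \ref{item:free} with $m=Vx^*V$ only yields $Vx^*Vx=xx^*$, which is not the claim. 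The repair is the paper's one-liner $VxV=V(Jx^*J)=JVx^*J=J(xV)^*J=J(Jx^*J)^*J=x$, or equivalently comparing the $J$-conjugate $JxJ=x^*V$ with the adjoint $JxJ=Vx^*$ of the defining relation.

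Second, for the inner case you correctly isolate the substantive point ($Ju^*J=uV$) but leave it as a gesture, and the phrase ``$u^*\Om$ is related to $\Om$ by $V$'' is not the right statement; also no adjustment of $u$ by a central phase is needed (and for a scalar phase it would change nothing, since $J$ is antilinear). The precise argument, which is the paper's: since $V\Om=\Om$, the vector state is $\gamma$-invariant, so for any implementing unitary $u\in\M$ one has $\scp{\Om}{uy\Om}=\scp{\Om}{\Ad_V(yu)\Om}=\scp{\Om}{yu\Om}$, i.e. $u\in\Zen_\Om(\M)=\M^\alpha$; hence $\Delta^{1/2}u\Om=u\Om$ and $Ju^*J\Om=u\Om=uV\Om$. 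Since $Vu^*\in\M'$ (it implements the identity on $\M$), both $uV$ and $Ju^*J$ lie in $\M'$, and $\Om$ is separating for $\M'$, so $Ju^*J=uV$ and $u\in\Zen(\M,V)$; the two inclusions $\Zen(\M,V)=u\cdot\Zen(\M)$ then follow from \ref{item:prodtwo} and \ref{item:prodcentral} exactly as you say. With these two repairs (and carrying out the kernel comparison you implicitly invoke in \ref{item:PolarDecomposition}, $\ker(Ju^*VJ)=\ker(u)$), your proof is complete and coincides with the paper's.
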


\begin{proof}
	a) Clearly, $\Zen(\M, V)$ is a $^*$-invariant complex subspace of $\M$ and the defining relation depends WOT-continuously on $x \in \Zen(\M, V)$.
	
	b) Take $x \in \Zen(\M, V)$, then 
	\begin{equation*}
	VxV = VJx^*J = J Vx^*J = J(xV)^*J = J(Jx^*J)^*J = x,
	\end{equation*}
	proving $\Zen(\M, V) \subset \M^\gamma$. The invariance under the modular group follows by first considering $J \Delta^{\frac{1}{2}} x \Omega = S x \Omega = x^* \Omega = JxJV \Omega = J x \Omega,$
	where $S$ is the Tomita operator. Thus $x \Omega$ is eigenvector of $\Delta^\frac{1}{2}$ to the eigenvalue $1$. Hence, it is invariant under the modular group and $x \Om = \Delta^{it} x \Om = \Delta^{it} x \Delta^{-it} \Om$.
	As $\Om$ is separating for $\M$, it follows that $x = \Delta^{it} x \Delta^{-it}$ and $x \in \M^\alpha$.
	
	c) Let $x \in \Zen(\M, V)$. For $y \in \M$
	\begin{equation*}
	xy = (Jx^*J)Vy = (Jx^*J)\Ad_V(y)V = \Ad_V(y)(Jx^*J)V = \Ad_V(y)x.
	\end{equation*}
	
	d) From item \ref{item:free},  $xy=\Ad_V(y)x$ for all $x\in \Zen(\M,V)$ and $y\in \M$. As $\Ad_V \circ \Ad_V = \Ad_{V^2} = \id$ and $\Zen(\M, V) \subset \M^\gamma$, it follows that $x_1x_2$ is central and even for $x_1,x_2 \in \Zen(\M, V)$. 
	
	e) Take $x \in \Zen(\M, V)$ and $z \in \Zen(\M)$ and consider
	\begin{equation*}
	JxzJ = JxJJzJ = x^*V z^* = x^*\Ad_V(z^*)V = (\Ad_V(z)x)^*V = (xz)^*V.
	\end{equation*}
	
	f) Take $x \in \Zen(\M, V)$ with polar decomposition $x = u\abs{x}$. Then, $\abs{x} = \sqrt{x^*x} \in \Zen(\M)^\gamma$, since $x^*x \in \Zen(\M)^\gamma$ as a consequence of \ref{item:prodtwo} and $\Zen(\M)^\gamma$ is a weakly closed subalgebra of $\M$.	The partial isometry $u$ satisfies $u\abs{x} = x =Jx^*VJ = Ju^*VJ \abs{x}$. By the uniqueness of the polar decomposition and
	\begin{equation*}
	\ker(Ju^*VJ) = JV\ker(u^*) = JV\ker(x^*) = \ker(Jx^*VJ) = \ker(x) = \ker(u),
	\end{equation*}	
	one concludes $Ju^*J = uV$.
	
	Assume now there exists a unitary $u \in \M$ with $\Ad_V = \Ad_u$ on $\M$. Then $u \in \M^\gamma$ and furthermore $u \in \M^\alpha = \Zen_\Om(\M)$ because
	\begin{equation*}
		\scp{\Om}{u y \Om} = \scp{\Om}{u(y u) u^* \Om} = \scp{\Om}{\Ad_V(y u) \Om} = \scp{\Om}{yu \Om },\qquad y\in\M.
	\end{equation*}
	Then $uV \in \M'$ and $\Om$ separating implies with $Ju^*J \Om = u \Om = u V \Om$ that $u \in \Zen(\M,V)$. Moreover, for $x \in \Zen(\M,V)$ we have $x = u(u^*x) \in u \cdot \Zen(\M)$ by item \ref{item:prodtwo}. The opposite inclusion follows from item \ref{item:prodcentral}. 
\end{proof}

Note that by Lemma~\ref{lem:TwistedCenterProperties}~\ref{item:PolarDecomposition} every nonzero $x \in \Zen(\M, V)$ gives rise to a KMS state $\nu_x$ on $\M$ by
\begin{equation*}
	\nu_x(y) := \norm{\abs{x}^\frac{1}{2} \Om}^{-2} \scp{ \abs{x}^\frac{1}{2} \Om}{y \abs{x}^\frac{1}{2} \Om},
\end{equation*}
due to the correspondence between normal KMS states on $\M$ and the operators affiliated to $\Zen(\M)$, see \cite[Proposition~5.3.29]{BratteliRobinson:1997}. This coincides with the abstract polar decomposition of twisted KMS functionals. Every nonzero twisted KMS functional $\rho$ defines a KMS state $\abs{\rho}$ by polar decomposition of functionals and subsequent normalization, the resulting grading $\Ad_V$ in the GNS space of $\abs{\rho}$ is then given by a inner unitary, see \cite{BuchholzLongo:1999}. 

Conversely, we have seen above that a weakly inner grading $\gamma$ on $\A$ leads to the non-trivial twisted center $\Zen(\M,V)=u \cdot \Zen(\M)$ with the unitary $u\in\M$ implementing $\gamma$, and hence to a complete clarification of the extension problem. This of course includes the special case of a grading that is inner on the $C^*$-algebra $\A$.

In the interest of a criterion that can be checked in concrete applications, we also note the following more concrete situation.

\begin{proposition}\label{prop:weaklyinnergrading}
	Let $(\A,\alpha,\gamma)$ be a graded $C^*$-dynamical system and $\om\in\KMS(\A,\alpha,\gamma)$ with associated graded von Neumann algebra $(\M, \Om, V)$. Suppose $\A$ contains a sequence $(u_n)_{n\in\Nl}$ of unitary selfadjoint elements that are invariant under the dynamics and grading and further satisfy
	\begin{align}
	\lim_{n,m\to\infty}\om(u_nu_m)&=1,\qquad
	\lim_{n\to\infty}\|u_nau_n-\gamma(a)\|=0,\qquad a\in \A_0,
	\end{align}
	where $\A_0\subset\A_\alpha$ is a norm dense ${}^*$-subalgebra invariant under the dynamics. Then	\begin{align}
		\rho(a):=\lim_{n\to\infty}\om(au_n),\qquad a\in\A,
	\end{align}
	is a hermitian $\gamma$-twisted KMS functional dominated by $\om$, and the family of all such functionals is $\{\rho(\,\cdot\, z)\,:\,z=z^*\in\Zen(\M),\;\|z\|\leq1\}$.
\end{proposition}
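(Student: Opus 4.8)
The plan is to transport the sequence $(u_n)$ into the GNS representation of $\om$, identify the limit of the operators $\pi(u_n)$ with a selfadjoint unitary $T$ lying in the twisted center $\Zen(\M,V)$, and then read off both assertions from Theorem~\ref{thm:TwistedKMSFunctional} and the last part of Lemma~\ref{lem:TwistedCenterProperties}.

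First I would fix the GNS triple $(\Hil,\pi,\Om)$ of $\om$ with grading unitary $V$ and modular data $(J,\Delta)$ as above, and set $\M=\pi(\A)''$. Each $\pi(u_n)$ is a selfadjoint unitary in $\M$ and $\scp{\pi(u_n)\Om}{\pi(u_m)\Om}=\om(u_nu_m)$, so the hypothesis $\om(u_nu_m)\to1$ gives $\|\pi(u_n)\Om-\pi(u_m)\Om\|^2=2-2\re\om(u_nu_m)\to0$; hence $\pi(u_n)\Om$ converges to a unit vector $\xi$, and therefore $\rho(a)=\lim_n\scp{\Om}{\pi(a)\pi(u_n)\Om}=\scp{\Om}{\pi(a)\xi}$ exists for every $a\in\A$. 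The decisive step is to upgrade this to strong convergence of the operators $\pi(u_n)$: for $a\in\A_0$ one writes $\pi(u_n)\pi(a)\Om=\bigl(\pi(u_n)\pi(a)\pi(u_n)\bigr)\pi(u_n)\Om$ and combines the norm convergence $\pi(u_n)\pi(a)\pi(u_n)\to\pi(\gamma(a))$ (from $\|u_nau_n-\gamma(a)\|\to0$) with $\pi(u_n)\Om\to\xi$ to obtain $\pi(u_n)\pi(a)\Om\to\pi(\gamma(a))\xi$. Since $\pi(\A_0)\Om$ is dense in $\Hil$ and $\sup_n\|\pi(u_n)\|=1$, the $\pi(u_n)$ converge strongly to a contraction $T\in\M$ with $T\Om=\xi$, so that $\rho(a)=\scp{\Om}{\pi(a)T\Om}$.

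Next I would show $T$ is a selfadjoint unitary with $\Ad_T=\Ad_V$ on $\M$. Selfadjointness is immediate from $\pi(u_n)^*=\pi(u_n)$; that $T^2=1$ follows from $\pi(u_n)^2=1$ using $\|\pi(u_n)(\pi(u_n)v-Tv)\|\leq\|\pi(u_n)v-Tv\|\to0$; and $T\pi(a)T=\pi(\gamma(a))$ for all $a\in\A$ is obtained by comparing, for $a\in\A_0$, the strong limit $\pi(u_n)\pi(a)\pi(u_n)\to T\pi(a)T$ (products of uniformly bounded strongly convergent sequences converge strongly) with the norm limit $\pi(\gamma(a))$, and then using norm density of $\A_0$ and norm continuity of both sides; WOT-continuity of $\Ad_T$ and $\Ad_V$ extends the identity from $\pi(\A)$ to $\M$. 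Thus $\Ad_V$ is inner on $\M$, implemented by the unitary $T$, and the last part of Lemma~\ref{lem:TwistedCenterProperties} yields $T\in\Zen(\M,V)$ and $\Zen(\M,V)=T\cdot\Zen(\M)$; in particular $T\in B_\M\cap\Zen(\M,V)_{s.a.}$. Since $\rho(a)=\scp{\Om}{\pi(a)T\Om}$, Theorem~\ref{thm:TwistedKMSFunctional} identifies $\rho$ with $\Xi^{-1}(T)$, so $\rho\in\mathcal{F}_\beta(\A,\alpha,\gamma,\om)$, i.e. $\rho$ is a hermitian $\gamma$-twisted KMS functional dominated by $\om$.

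For the final claim I would use that $T$ is a selfadjoint unitary commuting with $\Zen(\M)$ (Lemma~\ref{lem:TwistedCenterProperties}~\ref{item:prodcentral}), so that $z\mapsto Tz$ is a bijection from $\{z=z^*\in\Zen(\M):\|z\|\leq1\}$ onto $B_\M\cap\Zen(\M,V)_{s.a.}$. Under the bijection $\Xi$ of Theorem~\ref{thm:TwistedKMSFunctional}, the operator $Tz$ corresponds to the functional $a\mapsto\scp{\Om}{\pi(a)Tz\Om}=\scp{\Om}{\pi(a)zT\Om}$, which is precisely $\rho(\,\cdot\,z)$ when $\rho$ is regarded as the normal functional $x\mapsto\scp{\Om}{xT\Om}$ on $\M$ precomposed with right multiplication by $z$. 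Since $\Xi^{-1}$ maps $B_\M\cap\Zen(\M,V)_{s.a.}$ onto all of $\mathcal{F}_\beta(\A,\alpha,\gamma,\om)$, it follows that $\mathcal{F}_\beta(\A,\alpha,\gamma,\om)=\{\rho(\,\cdot\,z):z=z^*\in\Zen(\M),\ \|z\|\leq1\}$. The only genuinely non-formal point is the passage from convergence of the vectors $\pi(u_n)\Om$ to strong convergence of the operators $\pi(u_n)$, together with the check that the limit is a unitary implementing $\gamma$ and not merely a contraction; this is where both limit hypotheses on $(u_n)$ enter, and care is needed because strong limits of unitaries are not unitary in general — here it works because the $\pi(u_n)$ are selfadjoint and square to the identity.
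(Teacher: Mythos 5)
Your proof is correct and follows essentially the same route as the paper: show that $\pi(u_n)$ converges strongly in $\M$ to a selfadjoint unitary implementing the grading, and then conclude both claims from Lemma~\ref{lem:TwistedCenterProperties} and Theorem~\ref{thm:TwistedKMSFunctional}. The only (harmless) difference is technical: you get strong convergence from the Cauchy property of $\pi(u_n)\Om$ combined with the identity $\pi(u_n)\pi(a)\Om=\bigl(\pi(u_n)\pi(a)\pi(u_n)\bigr)\pi(u_n)\Om$ on the dense set $\pi(\A_0)\Om$, whereas the paper first extracts a WOT limit $R$ and then upgrades to SOT convergence using cyclicity of $\Om$ for $\M'$.
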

\begin{proof}
	We first show the existence of the limit defining $\rho$. For $a\in\A$, we estimate
	\begin{align*}
	|\om(a(u_n-u_m))|^2
	&\leq
	\om(a^*a)\om(2-u_nu_m-u_mu_n)
	=
	2\om(a^*a)\cdot(1-\om(u_nu_m)),
	\end{align*}
	where we have used the $\alpha$-invariance of the $u_n$ and the KMS property of $\om$ in the last step. As $\om(u_nu_m)\to1$, this shows that $\rho$ exists as a functional on $\A$.
	
	Similarly, for $a\in\A,b\in\A_0$, we have 
	\begin{align*}
	\om(au_nb)
	&=
	\om(a[u_nbu_n-\gamma(b)]u_n)+\om(a\gamma(b)u_n) \to \rho(a\gamma(b))
	\end{align*}
	because $\|u_nbu_n-\gamma(b)\|\to0$. By uniform boundedness, this implies the existence of the WOT-limit $R:=\lim_n\pi(u_n)=R^*$. But as $\|R\Om\|^2=\lim_{n,m}\om(u_nu_m)=1$, we also have $\|\pi(u_n)\Om-R\Om\|^2\to1-\|R\Om\|^2=0$ and hence $\|(\pi(u_n)-R)x'\Om\|\to0$ for all $x'\in\M'$. As $\Om$ is cyclic for $\M'$, we arrive at $\pi(u_n)\to R$ in SOT, and hence at the unitarity of $R$. It is then clear from our assumptions that $RxR^*=VxV$ for all $x\in\M$. The claims now follow from Lemma~\ref{lem:TwistedCenterProperties}.
\end{proof}

Having clarified the case of weakly inner gradings, we now consider the opposite extreme. The following notion of outerness of an automorphism was introduced in \cite{Kallman:1969}. 

\begin{definition}
	A $^*$-automorphism $\gamma$ acting on a von Neumann algebra $\M$ is called freely acting if for $x \in \M$ the relation $xy = \gamma(y)x, \,y \in \M$ implies $x = 0$.
\end{definition}

In \cite[Theorem~1.11]{Kallman:1969} it is shown by a Zorn argument that every \linebreak $^*$-automorphism of a von Neumann algebra can be uniquely decomposed into an inner and a freely acting part through a central projection. We use this result to prove a similar decomposition of the $V$-twisted center by a central projection. For a central $\gamma$-invariant projection $p$ we denote $\M_p = p\M p$, $V_p = pVp$.

\begin{theorem} \label{thm:AbstractGNS_Split}
	Let $(\M, \Om, V)$ be a graded von Neumann algebra. Then there exist a unique maximal central projection $p \in \Zen(\M)^\gamma$, with the property that there exists a partial isometry $u_p \in \Zen(\M_p, V_p)$ with $u_p^*u_p = u_pu_p^*= p$. Furthermore, $V_p = u_pJu_pJ$ and
	\begin{equation*}
	\Zen(\M, V) = \Zen(\M_p, V_p) \op \Zen(\M_{p^\perp}, V_{p^\perp}) = u_p \cdot \Zen(\M_p) \op \{0\}.
	\end{equation*}
\end{theorem}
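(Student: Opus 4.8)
The plan is to apply Kallman's decomposition theorem to the automorphism $\gamma = \Ad_V$ of $\M$, transport the resulting central projection through the modular structure, and then identify the two summands of the twisted center using Lemma~\ref{lem:TwistedCenterProperties}. First I would invoke \cite[Theorem~1.11]{Kallman:1969} to obtain a central projection $q \in \Zen(\M)$ such that $\gamma$ is inner on $\M_q = q\M q$ and freely acting on $\M_{q^\perp}$; by uniqueness of this decomposition and the fact that $\gamma^2 = \id$ commutes with $\gamma$, the projection $q$ is $\gamma$-invariant, so set $p := q$. One then checks that $V_p = pVp$ is a $\Zlt$-grading of the graded von Neumann algebra $(\M_p, \Om_p, V_p)$, where $\Om_p = p\Om$ — here I use that $p \in \Zen(\M)^\gamma \subset \M^\alpha$ by Lemma~\ref{lem:TwistedCenterProperties}~\ref{item:inclusion} applied to the center (or directly, since $p$ commutes with $J$ and the modular group), so $p\Om$ is cyclic and separating for $\M_p$ with modular conjugation $J_p = pJp = Jp$.

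Next I would produce the partial isometry $u_p$. Since $\Ad_{V_p}$ is inner on $\M_p$, there is a unitary $w \in \M_p$ with $\Ad_w = \Ad_{V_p}$ on $\M_p$; the argument in the last paragraph of the proof of Lemma~\ref{lem:TwistedCenterProperties} (the weakly inner case) shows, word for word applied inside $\M_p$, that $w$ can be adjusted — or rather that $w \in \Zen_{\Om_p}(\M_p) = \M_p^\alpha$, so that $wV_p \in \M_p'$, $\Om_p$ is separating, and $J_p w^* J_p = w V_p$; thus $w \in \Zen(\M_p, V_p)$ and $w$ is a unitary in $\M_p$, i.e. $w^*w = ww^* = p$. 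Set $u_p := w$. The relation $V_p = u_p J u_p J$ restricted to $\M_p \Om_p$ follows from $J_p u_p^* J_p = u_p V_p$ together with $V_p \Om_p = \Om_p$, exactly as in the lemma. For the complementary projection, Lemma~\ref{lem:TwistedCenterProperties}~\ref{item:free} says that any $x \in \Zen(\M_{p^\perp}, V_{p^\perp})$ satisfies $xy = \Ad_{V_{p^\perp}}(y)x$ for all $y \in \M_{p^\perp}$; since $\gamma = \Ad_V$ is freely acting on $\M_{p^\perp}$, so is $\Ad_{V_{p^\perp}}$, forcing $x = 0$. Hence $\Zen(\M_{p^\perp}, V_{p^\perp}) = \{0\}$.

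To get the direct sum decomposition of $\Zen(\M, V)$ itself, I would observe that $p \in \Zen(\M)^\gamma$ lies in $\M^\alpha$ and commutes with $J$, so for any $x \in \Zen(\M, V)$ the cut-downs $px = xp$ and $p^\perp x = x p^\perp$ again lie in $\M$ and still satisfy $J(px)^*J = (px)V$ (using that $p$ commutes with $J$, $V$, and $x$), hence $px \in \Zen(\M_p, V_p)$ and $p^\perp x \in \Zen(\M_{p^\perp}, V_{p^\perp}) = \{0\}$; conversely $\Zen(\M_p, V_p) \subset \Zen(\M, V)$ since $p$ is central and commutes with everything in sight. Therefore $\Zen(\M, V) = \Zen(\M_p, V_p)$, and by the inner case of Lemma~\ref{lem:TwistedCenterProperties} this equals $u_p \cdot \Zen(\M_p)$. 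Finally, uniqueness and maximality of $p$: if $p'$ were another central $\gamma$-invariant projection admitting such a unitary $u_{p'} \in \Zen(\M_{p'}, V_{p'})$, then $\Ad_{u_{p'}} = \Ad_{V_{p'}}$ on $\M_{p'}$ (from $xy = \Ad_V(y)x = u_{p'}^* \cdots$; more precisely $u_{p'} \in \Zen(\M_{p'}, V_{p'})$ unitary forces $\gamma$ to be inner on $\M_{p'}$ via item~\ref{item:free}), so $p' \le p$ by the maximality clause in Kallman's theorem; maximality of $p$ among such projections is then exactly Kallman's maximality.

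The main obstacle I expect is the careful bookkeeping around the ``cut-down'' graded von Neumann algebra $(\M_p, \Om_p, V_p)$: one must verify that $\Om_p = p\Om$ is genuinely cyclic and separating for $\M_p$ acting on $p\Hil$ (this uses $p \in \M^\alpha$, equivalently $p$ in the centralizer, so that $p\Om$ is fixed and separating), and that its modular data are the compressions $J_p = Jp$, $\Delta_p = \Delta p$ — a standard but slightly delicate fact — before one can legitimately reuse the inner-case computation of Lemma~\ref{lem:TwistedCenterProperties} inside $\M_p$. Everything else is a transcription of the lemma and of Kallman's theorem.
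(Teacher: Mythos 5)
Your proposal is correct and follows essentially the same route as the paper: apply Kallman's decomposition to $\Ad_V$, take $p$ to be the resulting ($\gamma$-invariant) central projection, check that $(\M_p,p\Om,V_p)$ and $(\M_{p^\perp},p^\perp\Om,V_{p^\perp})$ are again graded von Neumann algebras so that $\Zen(\M,V)$ splits, kill the freely acting summand with Lemma~\ref{lem:TwistedCenterProperties}~\ref{item:free}, identify the inner summand as $u_p\cdot\Zen(\M_p)$ via the weakly inner case of that lemma, and deduce maximality/uniqueness of $p$ from the uniqueness clause in Kallman's theorem. The only cosmetic blemish is the appeal to Lemma~\ref{lem:TwistedCenterProperties}~\ref{item:inclusion} for $p\in\M^\alpha$ (that item concerns $\Zen(\M,V)$, not $\Zen(\M)$), but your parenthetical argument that $p$ commutes with $J$ and the modular group, since the modular data act trivially on the center, is exactly the right justification.
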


\begin{proof}
	By \cite[Theorem~1.11]{Kallman:1969} $\M$ can be uniquely decomposed into $\M = \M_1 \op \M_2$ such that $\Ad_V = \phi_1 \op \phi_2$ where $\phi_i(\M_i) \subset \M_i$, $\phi_1$ is inner on $\M_1$ and $\phi_2$ is freely acting on $\M_2$. We denote the central projection associated to this composition by $p = 1 \op 0$, which is clearly $\Ad_V$-invariant by $\Ad_V(p) = \phi_1(1) \op 0 = p$. Then $\phi_1 \op \phi_2 = \Ad_{V_p} \op \Ad_{V_{p^\perp}}$ by
	\begin{equation*}
		(\phi_1 \op \phi_2)(x \op y) = \Ad_V(x \op y) = \Ad_V(x) \op \Ad_V(y) = (\Ad_{V_p} \op \Ad_{V_{p^\perp}})(x \op y).
	\end{equation*}
	The cyclic and separating vector $\Om$ similarly decomposes into $\Om = p\Om \op p^\perp \Om \in p\Hil \op p^\perp \Hil$, where $\Hil$ is the Hilbert space $\M$ acts on. Using the centrality of $p$, a straightforward calculation shows that $p \Om$ is cyclic and separating for $\M_1 = \M_p$ and similarly $p^\perp$ is cyclic and separating for $\M_2 = \M_{p^\perp}$. Therefore $(\M,\Om,V)$ decomposes as a graded von Neumann algebra and so does
	\begin{equation*}
		\Zen(\M,V) = \Zen(\M_p, V_p) \op \Zen(\M_{p^\perp}, V_{p^\perp}).
	\end{equation*}
	
	The automorphism $\phi_2 = \Ad_{V_{p^\perp}}$ is freely acting on $\M_2$, therefore Lemma \ref{lem:TwistedCenterProperties}~\ref{item:free} directly shows $\Zen(\M_{p^\perp}, V_{p^\perp}) = \{0\}$. As $\phi_1 = \Ad_{V_p}$ is inner on $\M_p$, there exists a $u_p \in \M_p$ with $\Ad_{u_p} = \Ad_{V_p}$ and $u_p^* u_p = u_pu_p^*= p$. Lemma \ref{lem:TwistedCenterProperties} then shows
	\begin{equation*}
		\Zen(\M_p, V_p) = u_p \cdot \Zen(\M_p).
	\end{equation*}
	As $u_p$ is unitary on $p\Hil$, the equation $u_p V_p = Ju_p^*J$ can be rewritten in the form $V_p = u_pJu_pJ$.
	
	We now show that $p$ is the unique maximal projection with the assumed property. This follows from the uniqueness of the decomposition of $\Ad_V$ into an inner and freely acting part in \cite[Theorem~1.11]{Kallman:1969}. Assume there exists another central projection $q$ with a partial isometry $u_q \in \Zen(\M_q, V_q)$ satisfying $u_q^* u_q = q$. Then $\Ad_{V_q}$ is an inner automorphism of $\M_q$ and therefore $\M_q \subset \M_p$ and $q \leq p$.
\end{proof}

We now discuss some direct consequences of Theorem \ref{thm:AbstractGNS_Split}.

\begin{corollary} \label{cor:FreelyActing}
	Let $(\M, \Om, V)$ be a graded von Neumann algebra. Then, $\Zen(\M,V) = \{ 0 \}$ if and only if $\Ad_V$ is freely acting on $\M$.
\end{corollary}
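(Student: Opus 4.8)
The plan is to prove the two implications separately, getting the ``easy'' direction straight from Lemma~\ref{lem:TwistedCenterProperties} and the other direction as an essentially immediate consequence of the decomposition established in Theorem~\ref{thm:AbstractGNS_Split}.

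First I would treat ``$\Ad_V$ freely acting $\Rightarrow \Zen(\M,V)=\{0\}$''. Let $x\in\Zen(\M,V)$. By Lemma~\ref{lem:TwistedCenterProperties}~\ref{item:free} we have $xy=\Ad_V(y)x$ for every $y\in\M$. Since $\Ad_V$ is freely acting, the defining property of a freely acting automorphism forces $x=0$. Hence $\Zen(\M,V)=\{0\}$. This uses nothing beyond the cited item of the lemma and the definition.

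For the converse I would invoke Theorem~\ref{thm:AbstractGNS_Split}: there is a (unique maximal, $\gamma$-invariant) central projection $p\in\Zen(\M)^\gamma$ carrying a partial isometry $u_p\in\Zen(\M_p,V_p)$ with $u_p^*u_p=u_pu_p^*=p$, and $\Zen(\M,V)=u_p\cdot\Zen(\M_p)\op\{0\}$. If $\Zen(\M,V)=\{0\}$, then in particular $u_p=u_p\,p\in u_p\cdot\Zen(\M_p)\subset\Zen(\M,V)=\{0\}$, so $u_p=0$ and therefore $p=u_p^*u_p=0$. Consequently $\M=\M_{p^\perp}$, which by the construction in the proof of Theorem~\ref{thm:AbstractGNS_Split} (the Kallman decomposition, \cite[Theorem~1.11]{Kallman:1969}) is exactly the summand on which $\Ad_V$ is freely acting; this establishes the claim.

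The argument is pure bookkeeping once Theorem~\ref{thm:AbstractGNS_Split} is available, so there is no real obstacle; the only point that deserves a word of care is checking that $p=0$ genuinely identifies all of $\M$ with the freely acting summand $\M_{p^\perp}$ of the Kallman decomposition — i.e. that no residual inner part could be hiding elsewhere — but this is precisely what the uniqueness/maximality clause of Theorem~\ref{thm:AbstractGNS_Split} (equivalently, of \cite[Theorem~1.11]{Kallman:1969}) rules out.
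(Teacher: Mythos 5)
Your proof is correct and follows exactly the route the paper intends: the paper states this corollary as a direct consequence of Theorem~\ref{thm:AbstractGNS_Split} (with the easy direction being Lemma~\ref{lem:TwistedCenterProperties}~\ref{item:free} plus the definition of freely acting), which is precisely your argument. The observation that $\Zen(\M,V)=\{0\}$ forces $u_p=0$, hence $p=u_p^*u_p=0$, so that $\M=\M_{p^\perp}$ is the freely acting summand of the Kallman decomposition, is the intended bookkeeping and is handled correctly, including the appeal to uniqueness/maximality of $p$.
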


A similar result has recently been derived for traces on crossed products by discrete groups, see \cite{Ursu:2021}. Here we emphasize that the enveloping von Neumann algebra $(\M, \Om)$ of a $C^*$-dynamical system $(\A, \alpha)$ with KMS state $\om$ depends on the dynamics $\alpha$. Therefore, whether $\Ad_V$ is freely acting on $\M$ depends on $\alpha$.

\begin{corollary}\label{cor:one-dimensional}
	Let $(\M, \Om, V)$ be a graded von Neumann algebra and $\M$ a factor. Then $\Zen(\M, V)$ is at most one-dimensional.
\end{corollary}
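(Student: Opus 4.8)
The statement to prove is Corollary~\ref{cor:one-dimensional}: if $(\M,\Om,V)$ is a graded von Neumann algebra and $\M$ is a factor, then $\Zen(\M,V)$ is at most one-dimensional. The natural approach is to apply Theorem~\ref{thm:AbstractGNS_Split} and use the hypothesis that $\M$ is a factor to trivialize the center. First I would invoke the theorem to obtain the unique maximal central projection $p\in\Zen(\M)^\gamma$ together with the decomposition
\begin{equation*}
	\Zen(\M,V) = u_p\cdot\Zen(\M_p)\op\{0\}.
\end{equation*}
Since $\M$ is a factor, $\Zen(\M)=\Cl\,1_\Hil$, so the only central projections are $0$ and $1_\Hil$, and in particular $p\in\{0,1_\Hil\}$.

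\textbf{Case analysis.} If $p=0$, then $\M_p=\{0\}$ and $\Zen(\M,V)=\{0\}\op\{0\}=\{0\}$, which is zero-dimensional, hence certainly at most one-dimensional. If $p=1_\Hil$, then $\M_p=\M$, $V_p=V$, and $\Zen(M_p)=\Zen(\M)=\Cl\,1_\Hil$, so $\Zen(\M,V)=u_p\cdot\Cl\,1_\Hil=\Cl\,u_p$, which is exactly one-dimensional (note $u_p$ is unitary, hence nonzero). In either case $\dim\Zen(\M,V)\leq1$, as claimed. One could equivalently phrase this via Corollary~\ref{cor:FreelyActing}: either $\Ad_V$ is freely acting on the factor $\M$, in which case $\Zen(\M,V)=\{0\}$, or $\Ad_V$ is not freely acting, and then by Kallman's dichotomy (no nontrivial central projection to split off, $\M$ being a factor) $\Ad_V$ is inner, say $\Ad_V=\Ad_u$ with $u\in\M$ unitary, whence Lemma~\ref{lem:TwistedCenterProperties} gives $\Zen(\M,V)=u\cdot\Zen(\M)=\Cl\,u$.

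\textbf{Main obstacle.} There is essentially no obstacle here — the corollary is a direct specialization of Theorem~\ref{thm:AbstractGNS_Split} once one observes that a factor has no room for a nontrivial central projection, forcing the freely-acting/inner dichotomy to be global rather than a genuine direct-sum splitting. The only point that requires a moment's care is confirming that in the inner case the resulting space is exactly (not merely at most) one-dimensional, i.e.\ that $u_p\neq0$; this is immediate since $u_p^*u_p=p=1_\Hil$ makes $u_p$ a unitary. So the proof is a two-line deduction, and the write-up should simply cite Theorem~\ref{thm:AbstractGNS_Split} and split on the two possible values of the central projection $p$.
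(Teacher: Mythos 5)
Your proposal is correct and follows essentially the same route as the paper: invoke Theorem~\ref{thm:AbstractGNS_Split}, note that factoriality forces the central projection $p$ to be $0$ or $1$, and conclude $\Zen(\M,V)=\{0\}$ or $\Zen(\M,V)=u\cdot\Cl$ accordingly. The paper merely phrases the dichotomy as ``$\Zen(\M,V)=\{0\}$ is trivial; otherwise the theorem yields a unitary $u$ with $\Zen(\M,V)=u\cdot\Cl$,'' which is your case split in different words.
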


\begin{proof}
	The only central projections in a factor are $0$ and $1$. The case $\Zen(\M, V)=\{0\}$ is trivial. Therefore, suppose now $\Zen(\M, V)\neq\{0\}$. Then, Theorem \ref{thm:AbstractGNS_Split} yields a unitary $u \in \Zen(\M,V)$ and $\Zen(\M,V) = u \cdot \Cl$.
\end{proof}

This corollary can now be applied to the enveloping von Neumann algebra $\M$ corresponding to an extremal KMS state $\om$ on $\A$. 

\begin{corollary}\label{cor:atmost2}
	Let $(\A,\alpha,\gamma)$ be a graded $C^*$-dynamical system and $\om$ extremal in $\mathcal{S}_\beta(\A, \alpha, \gamma)$. Then the set $\mathcal{F}_{\beta}(\A, \alpha, \gamma, \omega)$ has at most two extremal points.
\end{corollary}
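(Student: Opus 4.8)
The plan is to transport everything to the twisted center and count there. By Theorem~\ref{thm:TwistedKMSFunctional} the map $\Xi$ is an affine bijection from $\mathcal{F}_{\beta}(\A, \alpha, \gamma, \om)$ onto $B_\M\cap\Zen(\M,V)_{s.a.}$, where $(\M,\Om,V)$ is the enveloping graded $W^*$-dynamical system of $\om$. An affine bijection carries extreme points onto extreme points, so it suffices to show that $B_\M\cap\Zen(\M,V)_{s.a.}$ has at most two extreme points.

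The decisive structural input I would establish is that extremality of $\om$ in $\mathcal{S}_\beta(\A,\alpha,\gamma)$ forces $\Zen(\M)^\gamma=\Cl 1$. I would prove this by contraposition. Suppose $q\in\Zen(\M)^\gamma$ is a projection with $0\neq q\neq1$. Since $\Om$ separates $\M$, the normal extension of $\om$ satisfies $\om(q)\in(0,1)$, and the functionals $\om_q:=\om(q)^{-1}\scp{\Om}{\pi(\cdot)\,q\Om}$ and $\om_{q^\perp}$ are $(\alpha,\beta)$-KMS states on $\A$: this is \cite[Prop.~5.3.29]{BratteliRobinson:1997}, as $q$ is a central projection and hence lies in the modular centralizer. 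They are $\gamma$-invariant because $Vq=qV$ and $V\Om=\Om$, and they are distinct on $\A$ since, choosing a net $a_i\in\A$ with $\pi(a_i)\to q$ weakly, one gets $\om_q(a_i)\to1$ while $\om_{q^\perp}(a_i)\to0$. Then $\om=\om(q)\,\om_q+\om(q^\perp)\,\om_{q^\perp}$ is a nontrivial convex decomposition inside $\mathcal{S}_\beta(\A,\alpha,\gamma)$, contradicting extremality. Hence $\Zen(\M)$ has no nontrivial $\gamma$-invariant projection, and by the spectral theorem $\Zen(\M)^\gamma=\Cl 1$.

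I would then invoke the splitting of Theorem~\ref{thm:AbstractGNS_Split}: let $p\in\Zen(\M)^\gamma$ be the maximal central projection on which $\Ad_V$ is inner, implemented by a unitary $u_p\in\M_p$. Since $\Ad_{u_p}$ fixes the abelian algebra $\Zen(\M_p)=\Zen(\M)p$ pointwise, every element of $\Zen(\M_p)$ is a $\gamma$-invariant central element of $\M$, so $\Zen(\M_p)\subseteq\Zen(\M)^\gamma=\Cl 1$; as $p\in\Zen(\M_p)$, this forces $p\in\{0,1\}$. If $p=0$, then $\Zen(\M,V)=\{0\}$ by Theorem~\ref{thm:AbstractGNS_Split} and $B_\M\cap\Zen(\M,V)_{s.a.}=\{0\}$, which has one extreme point. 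If $p=1$, then $\Zen(\M)=\Cl 1$, i.e.\ $\M$ is a factor, so by Corollary~\ref{cor:one-dimensional} $\Zen(\M,V)=\Cl u$ for some unitary $u\in\M$; being $^*$-invariant, its self-adjoint part is a one-dimensional real subspace, spanned by a self-adjoint unitary $w$, whence $B_\M\cap\Zen(\M,V)_{s.a.}=[-w,w]$ is a segment with exactly the two extreme points $\pm w$. In either case there are at most two extreme points, and pulling back along $\Xi$ finishes the proof.

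The main obstacle is the ergodicity step of the second paragraph: the delicate point is to verify that the cut-down functionals $\om_q$ are genuine $(\alpha,\beta)$-KMS states on the $C^*$-algebra $\A$ (not merely normal functionals on $\M$) and that they are distinct as states on $\A$ rather than only on $\M$; both rest on standard modular theory together with the weak density of $\pi(\A)$ in $\M$. The remaining steps are essentially bookkeeping with the properties of $\Zen(\M,V)$ already recorded in Lemma~\ref{lem:TwistedCenterProperties}, Theorem~\ref{thm:AbstractGNS_Split}, and Corollary~\ref{cor:one-dimensional}.
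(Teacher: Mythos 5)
Your proof is correct, and it reaches the conclusion by a slightly different and in fact more careful route than the paper. The paper treats this as an immediate consequence of Corollary~\ref{cor:one-dimensional}, i.e.\ it implicitly passes through ``$\om$ extremal $\Rightarrow$ $\M=\pi(\A)''$ is a factor $\Rightarrow$ $\Zen(\M,V)$ is at most one-dimensional $\Rightarrow$ the selfadjoint unit ball of $\Zen(\M,V)$ has at most two extreme points,'' and then pulls back along $\Xi$ exactly as you do. You instead observe that the stated hypothesis is extremality only within the $\gamma$-invariant KMS states $\mathcal{S}_\beta(\A,\alpha,\gamma)$, which does \emph{not} force $\M$ to be a factor (an invariant average of two non-invariant KMS states swapped by $\gamma$ can be extremal in the invariant set while $\Zen(\M)$ is nontrivial); what it does force, by your cut-down argument with a $\gamma$-invariant central projection $q$ (the states $\om_q$ are KMS by \cite[Prop.~5.3.29]{BratteliRobinson:1997} since $\Zen(\M)\subset\Zen_\Om(\M)$, $\gamma$-invariant since $VqV=q$ and $V\Om=\Om$, and distinct by Kaplansky density), is $\Zen(\M)^\gamma=\Cl 1$. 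Combining this with the Kallman splitting of Theorem~\ref{thm:AbstractGNS_Split} then yields the dichotomy: either $p=0$ and $\Zen(\M,V)=\{0\}$, or $p=1$, in which case the grading is inner, $\Zen(\M)=\Zen(\M)^\gamma=\Cl1$, and $\Zen(\M,V)=\Cl u$ with selfadjoint unitary representative $w$, so the relevant convex set is $\{0\}$ or the segment $[-w,w]$, with one resp.\ two extreme points. So your argument proves the corollary under its literal hypothesis, whereas the paper's one-line deduction is airtight verbatim when $\om$ is extremal among \emph{all} KMS states (where factoriality is standard); the price you pay is the extra ergodicity step, which you execute correctly.
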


This corollary in combination with Theorem \ref{thm:KMSStateCrossedProduct} implies that there exist at most two extremal extensions of a given extremal $\gamma$-invariant KMS state $\om$ on $\A$ to a KMS state $\bom$ on $\bA$.

It is clear that this is no longer true for non-extremal $\om$. To give an explicite example, let $(\M, \Om, V)$ be a graded von Neumann algebra with $\M$ a factor and $u \in \Zen(\M,V)$ unitary. $u$ can then be chosen selfadjoint as well, because $u^2 \in \Zen(\M) = \Cl \cdot 1$. Define $\hat{\M}=\M\ot D_n$, where $D_n$ denotes the set of $n\times n$ diagonal matrices, and $\hat{V}=V\ot \id_n$. Then $\hat{u}:=u\ot \id_n\in $ is invertible and $\Zen(\hat\M, \hat V)_{s.a.}= \hat{u} \cdot \Zen(\hat \M)_{s.a} =(\Rl u) \ot (D_n)_{s.a.}$. Therefore, $B_{\hat\M}\cap \Zen(\hat\M,\hat V)_{s.a.}$ has $2^n$ extreme points.

\subsection{Twisted KMS functionals and asymptotically abelian dynamics}\label{section:asymptotic-abelian}

We now shift the focus from the grading $\gamma$ to the dynamics $\alpha$ and discuss weak asymptotic commutativity, as it plays a prominent role in many physical models \cite{BratteliRobinson:1997, BuchholzLongo:1999}.

\begin{definition}
	A graded $C^*$-dynamical system $(\A, \alpha, \gamma)$ is called $\omega$-weakly asymptotically abelian for $\omega \in \mathcal{S}_{\beta}(\A, \alpha)$, if for all $a, b, c \in \A$
	\begin{equation*}
		\omega(a \comm{\alpha_t(b)}{c}) \xrightarrow[t \to \infty]{} 0.
	\end{equation*}
	It is similarly called $\gamma$-graded $\omega$-weakly asymptotically abelian for $\omega \in \mathcal{S}_{\beta}(\A, \alpha, \gamma)$, if
	\begin{equation*}
	\omega(a \comm{\alpha_t(b)}{c}_\gamma) \xrightarrow[t \to \infty]{} 0, \quad \text{where} \quad\comm{b}{c}_\gamma = bc - \gamma(c)b.
	\end{equation*}

	A graded $W^*$-dynamical system $(\M, \Om, V)$ is called asymptotically abelian, if for all $x, y \in \M$ 
	\begin{equation*}
		\comm{\alpha_t(x)}{y} \xrightarrow[t \to \infty]{\text{WOT}} 0,
		\end{equation*}
	where $\alpha$ is the modular flow associated to $\Om$. It is similarly called $V$-graded asymptotically abelian, if
	\begin{equation*}
	\comm{\alpha_t(x)}{y}_V \xrightarrow[t \to \infty]{\text{WOT}} 0, \quad \text{where} \quad \comm{x}{y}_V = xy - \Ad_V(y)x.
	\end{equation*}
\end{definition}

A routine calculation then shows that, if $\A$ is (graded) $\omega$-weakly asymptotically abelian for $\omega$, then the associated von Neumann algebra $\M$ is (graded) asymptotically abelian for the modular flow. For a discussion of graded a\-symp\-totic abelianess, see \cite[Section~3]{BuchholzLongo:1999}. 

We now derive strong implications of (graded) asymptotic abelian dynamics for the twisted center.

\begin{lemma} \label{lem:W*AlgCentralizer}
	Let $(\M, \Om, V)$ be a graded von Neumann algebra. Then the following statements holds:
	\begin{enumerate}
		\item If $\M$ is asymptotically abelian, then $\Zen_\Omega(\M) = \Zen(\M)$ and in particular $\Zen(\M,V) \subset \Zen(\M)$; \label{item:AsymptoticallyAbelian}
		\item If $\M$ is $V$-graded asymptotically abelian, then $\Zen(\M,V) = \Zen_\Omega(\M) \subset \Zen(\M)$; \label{item:GradedAsymptoticallyAbelian}
		\item If $\Zen(\M, V) \subset \Zen(\M)$ and $V|_{p\Hil} \neq 1_{p \Hil}$ for all nonzero central projections $p \in \Zen(\M)^\gamma$, then $\Zen(\M, V) = \{0\}$; \label{item:Central}
		\item If $\M$ is a factor, $V \neq 1$ and $\M$ is either asymptotically abelian or $V$-graded asymptotically abelian, then $\Zen(\M, V) = \{0\}$.
	\end{enumerate}
\end{lemma}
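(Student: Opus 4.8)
**Proof plan for Lemma~\ref{lem:W*AlgCentralizer}.**

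The plan is to prove the four statements essentially in order, since each later item leans on the earlier ones. For item~\ref{item:AsymptoticallyAbelian}, the inclusion $\Zen(\M)\subset\Zen_\Omega(\M)$ is automatic (central elements commute with everything, hence in particular are fixed by the modular flow, using that $\Zen_\Omega(\M)=\M^\alpha$ by modular theory). For the reverse inclusion, take $x\in\Zen_\Omega(\M)=\M^\alpha$ and $y\in\M$; then $\alpha_t(x)=x$ for all $t$, so $\comm{x}{y}=\comm{\alpha_t(x)}{y}\xrightarrow{\mathrm{WOT}}0$, whence $\comm{x}{y}=0$ and $x\in\Zen(\M)$. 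The final clause then follows from Lemma~\ref{lem:TwistedCenterProperties}~\ref{item:inclusion}, which gives $\Zen(\M,V)\subset\M^\alpha=\Zen_\Omega(\M)=\Zen(\M)$.

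For item~\ref{item:GradedAsymptoticallyAbelian}, I first observe $\Zen(\M,V)\subset\M^\alpha=\Zen_\Omega(\M)$ again by Lemma~\ref{lem:TwistedCenterProperties}~\ref{item:inclusion}. For the reverse inclusion, let $x\in\Zen_\Omega(\M)=\M^\alpha$; by the $V$-graded asymptotic abelianness applied to $x$ and any $y\in\M$, together with $\alpha_t(x)=x$, we get $\comm{x}{y}_V = xy-\Ad_V(y)x\xrightarrow{\mathrm{WOT}}0$, hence $xy=\Ad_V(y)x$ for all $y$. Taking $y=1$ gives $x\in\M^\gamma$, and then $xy=\Ad_V(y)x$ says precisely that $x$ satisfies the hypothesis of Lemma~\ref{lem:TwistedCenterProperties}~\ref{item:free}-type relation; to land in $\Zen(\M,V)$ itself I still need $Jx^*J=xV$, which should follow since $x\in\M^\alpha$ means $x\Omega$ is $\Delta^{1/2}$-invariant, so $Sx\Omega=x^*\Omega$, and combining with $xV\Omega=x\Omega$ and the relation $JxJ\in\M'$ (because $xy=\Ad_V(y)x$ makes $xV\in\M'$) yields $Jx^*J\Omega = x\Omega = xV\Omega$, hence $Jx^*J=xV$ by separating-ness. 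Finally $\Zen_\Omega(\M)\subset\Zen(\M)$: for $x\in\M^\alpha$ with $xy=\Ad_V(y)x$, applying this twice gives $xy=\Ad_{V^2}(y)x=yx$, so $x$ is central.

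For item~\ref{item:Central}, suppose $\Zen(\M,V)\subset\Zen(\M)$ and let $x\in\Zen(\M,V)$; by Lemma~\ref{lem:TwistedCenterProperties}~\ref{item:PolarDecomposition} its polar decomposition $x=u|x|$ has $u\in\Zen(\M,V)$ a partial isometry, $|x|\in\Zen(\M)^\gamma$, and $q:=u^*u=uu^*$ is then a central $\gamma$-invariant projection. Since $u\in\Zen(\M)$ by hypothesis, the defining relation $Ju^*J=uV$ restricted to $q\Hil$ reads $u^*=uV$ on $q\Hil$ after using $JuJ=u$ (central), i.e.\ $V_q=u^*u=q$, so $V|_{q\Hil}=1_{q\Hil}$; by the hypothesis on central projections this forces $q=0$, hence $x=0$ and $\Zen(\M,V)=\{0\}$. (I should double-check the manipulation $Ju^*J=u^*$ for central $u$ — it holds because $\Zen(\M)=\{z:Jz^*J=z\}$.) The main obstacle is getting item~\ref{item:GradedAsymptoticallyAbelian}'s reverse inclusion clean, specifically verifying the $Jx^*J=xV$ identity rather than just the weaker commutation relation; everything else is bookkeeping with the earlier lemmas. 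Item~(d) is then immediate: if $\M$ is a factor, its only central projections are $0$ and $1$, and $V\neq1$ means $V|_{1\cdot\Hil}\neq 1_\Hil$, so under either asymptotic abelianness hypothesis items~\ref{item:AsymptoticallyAbelian}/\ref{item:GradedAsymptoticallyAbelian} give $\Zen(\M,V)\subset\Zen(\M)$, and then item~\ref{item:Central} applies with $p=1$ to yield $\Zen(\M,V)=\{0\}$.
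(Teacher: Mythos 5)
Your route is essentially the paper's for items (a), (b) and (d), and in (b) you even supply the detail the paper leaves implicit (that $xV\in\M'$ and $Jx^*J\Om=x\Om=xV\Om$ force $Jx^*J=xV$ by separation); in (c) you replace the splitting Theorem~\ref{thm:AbstractGNS_Split} by the polar-decomposition Lemma~\ref{lem:TwistedCenterProperties}~\ref{item:PolarDecomposition}, which is a legitimate, slightly more elementary variant. However, two steps fail as written. In (b), the closing argument ``applying $xy=\Ad_V(y)x$ twice gives $xy=\Ad_{V^2}(y)x=yx$'' is not a valid manipulation: replacing $y$ by $\Ad_V(y)$ gives $x\Ad_V(y)=yx$, and this together with $xy=\Ad_V(y)x$ only says that $x$ commutes with even elements and anticommutes with odd ones, not that $x$ is central. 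The inclusion $\Zen_\Omega(\M)\subset\Zen(\M)$ is still true, but needs a different argument: since $1\in\Zen_\Omega(\M)=\Zen(\M,V)$, every $x\in\Zen(\M,V)$ is the product $1\cdot x$ of two elements of the twisted center and hence lies in $\Zen(\M)^\gamma$ by Lemma~\ref{lem:TwistedCenterProperties}~\ref{item:prodtwo} (equivalently, $1\in\Zen(\M,V)$ means $J1^*J=1=1\cdot V$, so $V=1$ and one is back in case (a)). Also the aside ``taking $y=1$ gives $x\in\M^\gamma$'' is a non sequitur, since with $y=1$ the twisted commutator vanishes identically; fortunately you never use it.

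In (c) the adjoints are off: centrality means $Ju^*J=u$, equivalently $JuJ=u^*$, not $JuJ=u$, so combining with the defining relation $Ju^*J=uV$ yields $u=uV$, not $u^*=uV$. From the correct identity one gets $q=u^*u=u^*uV=qV$, hence $V_q=qVq=q$, i.e. $V|_{q\Hil}=1_{q\Hil}$, which is exactly the contradiction you want; note that your displayed conclusion ``$V_q=u^*u=q$'' is what the corrected computation produces but does not follow from $u^*=uV$. With these two repairs (and keeping your correct observations that $q=u^*u=uu^*$ is a central, $\gamma$-invariant projection because $u$ is central and even), the proof is complete and agrees in substance with the paper's, the only genuine difference of route being the use of the polar decomposition instead of Kallman's inner/freely-acting splitting in part (c).
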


\begin{proof}
	a) The inclusion $\Zen(\M) \subset \Zen_\Omega(\M)$ is evident. Thus, we show the opposite inclusion. Consider $x \in \Zen_\Omega(\M) = \M^\alpha$ and $y \in \M$, then
	\begin{equation}
	0 \xleftarrow[t \to \infty]{} \comm{\alpha_t(x)}{y} = \comm{x}{y}.
	\end{equation}
	The right hand side is independent of $t$ and thus $x \in \mathcal{Z}(\M)$. The inclusion $\Zen(\M,V) \subset \Zen(\M)$ follows from Lemma \ref{lem:TwistedCenterProperties}~\ref{item:inclusion}.
	
	b) The inclusion $\Zen(\M,V) \subset \Zen_\Omega(\M)$ is clear. Let $x \in \Zen_\Omega(\M)$ and $y \in \M$.  A similar calculation shows $\comm{x}{y}_\gamma = \comm{\alpha_t(x)}{y}_\gamma \xrightarrow[t \to \infty]{} 0$ and $x \in \Zen(\M,V)$.
	
	c) By Theorem \ref{thm:AbstractGNS_Split}, $\Zen(\M,V)$ splits into
	\begin{equation*}
	\Zen(\M, V) = \Zen(\M_p, V_p) \op \Zen(\M_{p^\perp}, V_{p^\perp}) = u \cdot \Zen(\M_p) \op \{0\},
	\end{equation*}
	with a partial isometry $u \in \Zen(\M, V)$ with $u^*u = p \in \Zen(\M)^\gamma$, and $V_p =uJuJ$. Then $u \in \Zen(\M, V) \cap \Zen(\M)$ implies $V_p = uJuJ = Ju^*JJuJ= p$. The assumption $V_p \neq p$ for $p \neq 0$ implies $p = 0$ and $\Zen(\M,V) =\{ 0 \}$.
	
	d) This follows directly by a combination of \ref{item:AsymptoticallyAbelian}, \ref{item:GradedAsymptoticallyAbelian} and \ref{item:Central}.
\end{proof}

These results can again be applied to graded $C^*$-dynamical systems with (graded) weakly asymptotically abelian dynamics. In particular, the following corollary holds by a combination of Theorem \ref{thm:KMSStateCrossedProduct}, \ref{thm:TwistedKMSFunctional} and Lemma \ref{lem:W*AlgCentralizer}.

\begin{corollary}\label{cor:asymptoticallyabelian}
	Let $(\A, \alpha, \gamma)$ be a graded $C^*$-dynamical system and $\om$ be an extremal KMS state in $\mathcal{S}_{\beta}(\A, \alpha, \gamma)$. If $\gamma \neq 1$ and $\A$ is either weakly asymptotically abelian or $\gamma$-graded weakly asymptotically abelian, then $\mathcal{F}_{\beta}(\A, \alpha, \gamma, \omega) = \{0\}$ and $\bomcan$ is the unique extension of $\om$ to $\bA$.
\end{corollary}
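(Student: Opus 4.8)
The plan is to reduce Corollary~\ref{cor:asymptoticallyabelian} to the already-established machinery, so the proof should be short. First I would invoke Theorem~\ref{thm:KMSStateCrossedProduct} and Theorem~\ref{thm:TwistedKMSFunctional} to translate the statement into a question about the $V$-twisted center: since $\om$ is extremal in $\mathcal{S}_\beta(\A,\alpha,\gamma)$, the enveloping von Neumann algebra $\M=\pi(\A)''$ is a factor (extremal KMS states have factorial GNS representations, see \cite[Theorem~5.3.30]{BratteliRobinson:1997}), and $\mathcal{F}_\beta(\A,\alpha,\gamma,\om)$ is in convex-linear bijection with $B_\M\cap\Zen(\M,V)_{s.a.}$. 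Hence it suffices to show $\Zen(\M,V)=\{0\}$.

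Next I would observe that the hypothesis passes from the $C^*$-level to the $W^*$-level: if $\A$ is (graded) $\om$-weakly asymptotically abelian, then by the routine calculation remarked on just before the corollary, $\M$ is (graded) asymptotically abelian for the modular flow. Also $\gamma\neq 1$ on $\A$ forces $V\neq 1$ on $\M$: if $V=1$ then $\Ad_V=\id$ on $\M$, so $\pi\circ\gamma=\pi$, and since $\pi$ is faithful (the GNS representation of a KMS state with $\Om$ separating is faithful) this would give $\gamma=\id_\A$, contrary to assumption. With $\M$ a factor, $V\neq 1$, and $\M$ (graded) asymptotically abelian, Lemma~\ref{lem:W*AlgCentralizer}~(d) yields $\Zen(\M,V)=\{0\}$ directly.

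Finally, feeding $\Zen(\M,V)=\{0\}$ back through the bijection $\Xi$ of Theorem~\ref{thm:TwistedKMSFunctional} gives $\mathcal{F}_\beta(\A,\alpha,\gamma,\om)=\{0\}$, and then Theorem~\ref{thm:KMSStateCrossedProduct} together with the remark that $\rho=0$ corresponds to $\bomcan$ shows that $\bomcan$ is the unique extension of $\om$ to $\bA$.

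I expect no serious obstacle here; the corollary is essentially a bookkeeping assembly of earlier results. The only point requiring a sentence of care is the two small promotions from the $C^*$-algebra to its GNS von Neumann algebra: factoriality of $\M$ from extremality of $\om$, and the implication $\gamma\neq 1\Rightarrow V\neq 1$ via faithfulness of $\pi$. Both are standard, but they are the places where the hypotheses are actually used, so I would state them explicitly rather than leave them implicit.
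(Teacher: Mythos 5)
Your overall route—reduce via Theorems~\ref{thm:KMSStateCrossedProduct} and \ref{thm:TwistedKMSFunctional} to showing $\Zen(\M,V)=\{0\}$ and then invoke Lemma~\ref{lem:W*AlgCentralizer}—is exactly the combination the paper cites, so the strategy matches. However, the specific way you discharge the hypotheses of Lemma~\ref{lem:W*AlgCentralizer}~(d) contains a genuine gap: you claim that extremality of $\om$ in $\mathcal{S}_\beta(\A,\alpha,\gamma)$ makes $\M$ a factor, citing the standard result for extremal KMS states. That result concerns extreme points of the full simplex $\mathcal{S}_\beta(\A,\alpha)$; here $\om$ is only assumed extremal in the smaller convex set of $\gamma$-invariant KMS states, and an extreme point of a subset need not be extreme in the larger set. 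Concretely, if $\om_1$ is an extremal KMS state with $\om_1\circ\gamma\neq\om_1$, then $\om=\tfrac12(\om_1+\om_1\circ\gamma)$ is extremal among the $\gamma$-invariant KMS states while $\Zen(\M)\cong\Cl\oplus\Cl$, so part~(d) is not applicable. What extremality in $\mathcal{S}_\beta(\A,\alpha,\gamma)$ does give (by decomposing $\om$ along $\gamma$-invariant central projections, which yield $\gamma$-invariant KMS states) is $\Zen(\M)^\gamma=\Cl\,1$. The argument should therefore run through parts~(a)/(b) together with~(c) of Lemma~\ref{lem:W*AlgCentralizer}: asymptotic (or graded asymptotic) abelianness gives $\Zen(\M,V)\subset\Zen(\M)$, Lemma~\ref{lem:TwistedCenterProperties}~\ref{item:inclusion} gives $\Zen(\M,V)\subset\M^\gamma$, hence $\Zen(\M,V)\subset\Zen(\M)^\gamma=\Cl\,1$, and $1\in\Zen(\M,V)$ would force $V=1$; this is precisely the situation part~(c) was designed for, and it avoids any factoriality assumption.

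A secondary point: your justification that $\gamma\neq1$ implies $V\neq1$ is also not correct as stated. That $\Om$ is separating for $\M$ gives faithfulness of $\om$ on $\M$, not faithfulness of $\pi$ on $\A$; GNS representations of KMS states can have nontrivial kernel (e.g. $\om_1\oplus 0$ on a direct sum annihilates an ideal). The implication $\gamma\neq 1\Rightarrow V\neq1$ does hold when $\pi$ is faithful (for instance when $\A$ is simple, as in all the paper's applications), and the paper itself passes tacitly from $\gamma\neq1$ to $V\neq1$, so this is a shared subtlety rather than a defect specific to your write-up; but the reason you give for it should be corrected or the hypothesis phrased in terms of $V$ (equivalently, $\gamma$ nontrivial modulo $\ker\pi$).
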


\section{KMS states on $\Zl_2$-crossed products of CAR algebras}\label{section:CAR}

\subsection{The {\rm CAR}-dynamical systems} \label{subsec:CAR_DynamicalSystem}

We now consider a particular $C^*$-algebra $\A$, namely the CAR algebra over a Hilbert space $\Hil$. Recall that $\CAR$ is the unique and simple $C^*$-algebra generated by a unit $1$ and elements $a(\varphi)$, $\varphi\in\Hil$, subject to the relations
\begin{align*}
	\Hil\ni\phi &\mapsto a^*(\phi) \, \, \text{is linear}, \\
	\acomm{a^*(\phi)}{a(\psi)} &= \scp{\psi}{\phi} \cdot 1 , \quad \phi, \psi \in \Hil, \\
	\acomm{a(\phi)}{a(\psi)} &= 0 = \acomm{ a^*(\phi)}{a^*(\psi)}, \quad \phi, \psi \in \Hil,
\end{align*}
where $\{\,\cdot\,,\,\cdot\,\}$ denotes the anticommutator \cite[Theorem~5.2.5]{BratteliRobinson:1997}. The norm on $\CAR$ satisfies $\|a(\varphi)\|=\|\varphi\|$ for all $\varphi\in\Hil$. Note that $\CAR$ is also generated by the field operators
\begin{align}\label{def:Phi}
	\Phi(\xi) &:=
	a^*(\xi)+a(\xi),\qquad\xi\in\Hil,
\end{align}
which depend real linearly on $\xi$, and satisfy
\begin{align}\label{eq:CAR}
	\{\Phi(\xi),\Phi(\xi')\} &= 2\,\Re\langle\xi,\xi'\rangle\cdot1.
\end{align}

We will here consider the case that the dynamics and grading are given by Bogoliubov automorphisms. That is, we consider a strongly continuous unitary one-parameter group $U(t)=e^{it\Ham}$ and a unitary selfadjoint grading operator $G$ on $\Hil$, and the unique dynamics and grading on $\CAR$ given by
\begin{align}
	\alpha^H_t(a(\varphi)) &= a(U(t)\varphi),
	\qquad
	\gamma_G(a(\varphi)) = a(G\varphi),\qquad \varphi\in\Hil.
\end{align}
We furthermore assume that $U(t)$ commutes with $G$, so that we obtain a graded $C^*$-dynamical system $\CARDyng$ in the sense of Definition~\ref{def:GradedDynamicalSystem}. Note that $\CARDyng$ is concretely represented on the Fermionic Fock space $\Fock$ over $\Hil$, with the dynamics and grading implemented as
\begin{align}
	\alpha^H_t(A) &= \Gamma(U(t))A\Gamma(U(t))^*
	,\qquad
	\gamma_G(A) = \Gamma(G)A\Gamma(G)^*, \qquad A \in \CAR.
\end{align}
Here $\Gamma(V)$ denotes the second quantization of a unitary $V$ on $\Hil$, namely the restriction of $\bigoplus_{n}V\tp{n}$ to $\Fock$. The grading given by $G=-1$ will be referred to as the canonical grading $\te:=\gamma_{-1}$. It coincides with $\Ad_{(-1)^N}$, where $N$ is the particle number operator of $\Fock$.

In the following, we want to apply the general results from Section~\ref{section:abstract} to $\CARDyng$. To begin with, we recall the structure of quasifree functionals on $\CAR$ and in particular the structure of the KMS states of $\CARDyn$. As a domain for potentially unbounded functionals, we define $\CAR_0\subset\CAR$ as the ${}^*$-algebra generated by the field operators $\Phi(\xi),\xi\in\Hil$.

A hermitian functional $\mu$ on $\CAR_0$ is called {\em quasifree} if $\mu(1)=1$,
\begin{align*}
	\mu(\Phi(\xi_1)\cdots\Phi(\xi_{2n-1})) &= 0,\\
	\mu(\Phi(\xi_1)\cdots\Phi(\xi_{2n})) &= \sum_{\pi\in{\rm P}_{2n}}
	{\rm sign}(\pi)\,\prod_{k=1}^n\mu(\Phi(\xi_{\pi(2k-1)})\Phi(\xi_{\pi(2k)})),
\end{align*}
for all $n\in\Nl$, $\xi_1,\ldots,\xi_n\in\Hil$, where $${\rm P}_{2n}=\{\pi\in S_{2n}\,:\,\pi(2j-1)<\pi(2j),\,\pi(2j-1)<\pi(2j+1),\;1\leq j\leq n\}$$ is the set of all partitions of $\{1,\ldots,2n\}$ into pairs (pairings). Clearly, a quasifree functional is completely determined by its two point function.

When $\mu$ is continuous, we denote its extension to the $C^*$-algebra $\CAR$ by the same symbol and call it a quasifree functional on $\CAR$. This is in particular the case when $\mu$ is positive.

\begin{proposition}
	\leavevmode
	\begin{enumerate}
		\item The set of all quasifree states $\mu$ on $\CAR$ is in bijection with the set of all anti selfadjoint operators $T=-T^*\in\B(\Hil)$ satisfying $\|T\|\leq1$. The state $\mu_T$ corresponding to $T$ has the two-point function
		\begin{align}\label{eq:covariance}
			\mu_T(\Phi(\xi)\Phi(\eta)) &= \Re\langle\xi,\eta\rangle+i\Re\langle\xi,T\eta\rangle.
		\end{align}
		\item Given any anti selfadjoint bounded operator $T=-T^*\in\B(\Hil)$, there exists a unique hermitian quasifree functional $\mu_T$ on $\CAR_0$ which satisfies \eqref{eq:covariance} and $\mu_T(1)=1$.
		\item The functional $\mu_T$ is invariant under $\gamma_G$ if and only if $[G,T]=0$.
	\end{enumerate}
\end{proposition}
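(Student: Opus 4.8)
The plan is to treat the three parts in the natural logical order: first establish the uniqueness and existence of the quasifree functional determined by a given two-point function (parts (b) and (a)), then read off the invariance criterion (part (c)). For part (b), I would start from the definition: a hermitian quasifree functional $\mu$ on $\CAR_0$ is, by the prescribed recursion over pairings, \emph{completely determined} by its value on one field (which is $0$ by the odd-vanishing rule) and on products $\Phi(\xi)\Phi(\eta)$. Given $T = -T^*$, the formula \eqref{eq:covariance} defines a candidate two-point function; one checks it is compatible with the CAR relation \eqref{eq:CAR}, i.e. that the symmetric part $\tfrac12(\mu_T(\Phi(\xi)\Phi(\eta)) + \mu_T(\Phi(\eta)\Phi(\xi))) = \Re\langle\xi,\eta\rangle$, using $\re\langle\xi,T\eta\rangle = -\re\langle T\xi,\eta\rangle = -\re\langle\eta,T\xi\rangle$ since $T$ is real-antisymmetric on the real inner product $\re\langle\cdot,\cdot\rangle$. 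Then one must verify that the recursive definition by pairings is \emph{consistent}: reordering adjacent fields $\Phi(\xi_k)\Phi(\xi_{k+1})$ using \eqref{eq:CAR} inside the $2n$-fold product, and applying the pairing sum on both sides, gives the same answer. This is the standard Wick-combinatorics consistency check; hermiticity of $\mu_T$ follows because $\overline{\mu_T(\Phi(\xi)\Phi(\eta))} = \re\langle\xi,\eta\rangle - i\re\langle\xi,T\eta\rangle = \re\langle\eta,\xi\rangle + i\re\langle\eta,T\xi\rangle = \mu_T(\Phi(\eta)\Phi(\xi))$, and more generally reality is preserved by the pairing sum together with $\Phi(\xi)^* = \Phi(\xi)$. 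Uniqueness is immediate from the recursion.

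For part (a), the additional input is positivity. The condition $\|T\|\le 1$ together with $T = -T^*$ is the standard criterion for $\mu_T$ to be a state; I would invoke the well-known correspondence (as in \cite{BratteliRobinson:1997}, the discussion of quasifree states on the CAR algebra) that $\mu_T \ge 0$ iff the two-point function defines a positive form on the one-particle space, which in the $\Phi$-parametrization amounts to $\mathbbm{1} + iT \ge 0$ as an operator on the complexification — equivalently $-\mathbbm{1}\le iT \le \mathbbm{1}$, i.e. $\|T\|\le 1$. Conversely every quasifree \emph{state} has a two-point function of the form \eqref{eq:covariance} for some bounded operator; antisymmetry of $T$ is forced by \eqref{eq:CAR} and boundedness with $\|T\|\le1$ is forced by positivity as above. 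Continuity of $\mu_T$ in the state case is automatic, and then $\mu_T$ extends to all of $\CAR$. The bijection $T \leftrightarrow \mu_T$ then follows since different $T$ give different two-point functions (because $\re\langle\xi, T\eta\rangle$ determines $T$ among antisymmetric operators).

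For part (c), the invariance $\mu_T \circ \gamma_G = \mu_T$ is equivalent, by the uniqueness from part (b) applied to the functional $\mu_T\circ\gamma_G$, to equality of two-point functions: $\mu_T(\Phi(G\xi)\Phi(G\eta)) = \mu_T(\Phi(\xi)\Phi(\eta))$ for all $\xi,\eta$. Since $G$ is unitary, $\re\langle G\xi, G\eta\rangle = \re\langle\xi,\eta\rangle$ automatically, so the condition reduces to $\re\langle G\xi, TG\eta\rangle = \re\langle\xi, T\eta\rangle = \re\langle \xi, T\eta\rangle$, i.e. $\re\langle \xi, G^*TG\,\eta\rangle = \re\langle\xi, T\eta\rangle$ for all $\xi,\eta$; since a bounded operator is determined by the real parts of its matrix elements (replace $\xi$ by $i\xi$ to recover imaginary parts), this says $G^*TG = T$, i.e. $GT = TG$, which is $[G,T]=0$. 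One should note the implicit point that $\gamma_G$ maps $\CAR_0$ to itself so the reduction to two-point functions is legitimate. The main obstacle is the consistency check in part (b) — verifying that the pairing recursion respects the anticommutation relations — which is routine Wick combinatorics but the only genuinely nontrivial computation; everything else is bookkeeping with the real inner product $\re\langle\cdot,\cdot\rangle$ and the uniqueness statement of part (b).
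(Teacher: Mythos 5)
Your proposal is correct and follows essentially the same route as the paper: part (a) is deferred to the standard literature on quasifree states, part (b) is proved by defining $\mu_T$ through the pairing recursion and checking consistency via the two facts $\mu_T(\{\Phi(\xi),\Phi(\eta)\})=2\Re\langle\xi,\eta\rangle$ and $\overline{\mu_T(\Phi(\xi)\Phi(\eta))}=\mu_T(\Phi(\eta)\Phi(\xi))$, with uniqueness immediate from the recursion, and part (c) reduces to comparing two-point functions. You merely spell out details (the Wick-consistency check and the $G^*TG=T$ computation) that the paper leaves implicit or declares clear.
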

\begin{proof}
	a) is well known (see, e.g., \cite{BalslevManuceauVerbeure:1968,DerezinskiGerard:2022}).
	
	b) Let $T$ be given and define $\mu_T:\CAR_0\to\Cl$ by \eqref{eq:covariance}, $\mu_T(1)=1$, and requiring it to be quasifree. This is a consistent definition resulting in a hermitian functional because the two-point function \eqref{eq:covariance} satisfies $\mu_T(\{\Phi(\xi),\Phi(\eta)\})=2\Re\langle\xi,\eta\rangle$ and $\overline{\mu_T(\Phi(\xi)\Phi(\eta))}=\mu_T(\Phi(\eta)\Phi(\xi))$. On the other hand, $T$ is uniquely determined by $\mu_T$.
	
	c) is clear.
\end{proof}

The following result can be found in \cite{BratteliRobinson:1997,DerezinskiGerard:2022}, see also \cite{Araki:1971} for the corresponding version on the self-dual CAR algebra.

\begin{proposition} \label{prop:ExistenceKMS}
	For every inverse temperature $\beta$ there exists a unique KMS state $\omega_{\beta}$ on $({\rm CAR}(\Hil),\alpha^H)$, namely
	\begin{align}\label{eq:QuasiFreeKMS}
		\om_\beta
		=
		\mu_{T_\beta},\qquad T_\beta = -i\,\tanh\left(\tfrac{\beta }{2}H\right).
	\end{align}
\end{proposition}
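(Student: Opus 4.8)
The statement to prove is Proposition~\ref{prop:ExistenceKMS}: for each $\beta$ there is a unique $\alpha^H$-KMS state on $\CAR$, given by the quasifree state $\mu_{T_\beta}$ with $T_\beta=-i\tanh(\tfrac\beta2 H)$. The plan is to split the argument into the verification that $\mu_{T_\beta}$ is KMS, and the uniqueness statement.

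\emph{Step 1: $T_\beta$ is an admissible covariance.} First I would check that $T_\beta=-i\tanh(\tfrac\beta2 H)$ is a bounded anti-selfadjoint operator with $\|T_\beta\|\le1$. This is immediate by functional calculus: $H$ is selfadjoint, $x\mapsto\tanh(\tfrac\beta2 x)$ is a bounded real-valued function with sup-norm $\le1$, so $\tanh(\tfrac\beta2 H)$ is bounded selfadjoint with norm $\le1$, and multiplying by $-i$ gives an anti-selfadjoint contraction. Hence by the preceding proposition there is a well-defined quasifree state $\om_\beta:=\mu_{T_\beta}$ on $\CAR$.

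\emph{Step 2: $\mu_{T_\beta}$ satisfies the KMS condition.} Since $\CAR_0$, generated by the $\Phi(\xi)$, is norm-dense and invariant under $\alpha^H$, and $\om_\beta$ is continuous, it suffices to verify the KMS identity on products of field operators, and by the quasifree (Pfaffian/Wick) structure of both sides it suffices to check it on two-point functions, i.e. to verify
\[
	\om_\beta(\Phi(\xi)\,\alpha^H_{i\beta}(\Phi(\eta)))=\om_\beta(\Phi(\eta)\Phi(\xi))
\]
for $\xi,\eta$ in a core of analytic vectors for $H$ (e.g.\ spectral subspaces of $H$ for bounded intervals), together with the analyticity/boundedness needed to pass to all of $\CAR_\alpha$. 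Here $\alpha^H_{i\beta}(\Phi(\eta))=\Phi(e^{-\beta H}\eta)$ on such vectors. Using \eqref{eq:covariance}, the left side is $\Re\langle\bar\xi, e^{-\beta H}\eta\rangle$-type terms plus $i\Re\langle\xi,T_\beta e^{-\beta H}\eta\rangle$; the right side is $\Re\langle\eta,\xi\rangle+i\Re\langle\eta,T_\beta\xi\rangle$. Reducing both to a computation in the joint spectral representation of $H$, the identity becomes the elementary scalar relation among $e^{-\beta h}$, $\cosh$, $\sinh$, $\tanh$, namely that $\tfrac{1-\tanh(\beta h/2)}{2}\cdot$ (appropriate factor) matches $e^{-\beta h}$ times the corresponding quantity; concretely $1-i T_\beta$ acts as $1+\tanh(\beta H/2)$ and one checks $e^{-\beta h}(1+\tanh(\beta h/2))=1-\tanh(\beta h/2)$. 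I would do this reduction carefully but regard it as routine. The genuinely delicate point is the extension from the nice analytic vectors to all of $\CAR_\alpha$: one must argue that the higher-order KMS identities for monomials in $\Phi$ follow from the two-point identity by the Pfaffian expansion and that all the analytic continuations in $t\mapsto\alpha^H_t$ are controlled (this is where the standard references \cite{BratteliRobinson:1997,DerezinskiGerard:2022} do the work); I would cite those rather than reprove them.

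\emph{Step 3: uniqueness.} For uniqueness I would invoke the general structure theory: the dynamical system $(\CAR,\alpha^H)$ with $\alpha^H$ a Bogoliubov (gauge-invariant, free) dynamics is known to be asymptotically abelian in a suitable sense and the KMS state at $\beta$ is unique — this is precisely \cite[Theorem~5.3.?]{BratteliRobinson:1997} / \cite{DerezinskiGerard:2022}, and also \cite{Araki:1971} in the self-dual formulation. The mechanism is that any KMS state restricted to the even CAR algebra is forced to be the quasifree state with covariance determined by the KMS condition on two-point functions (exactly the computation of Step~2 run in reverse, which pins down $T=T_\beta$), and that a KMS state is determined by its restriction to the even part together with the KMS relation; then the quasifree state is the only candidate. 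I expect the main obstacle to be formulating the uniqueness cleanly without reproducing the full Araki-type argument: the cleanest route is to note that a KMS two-point function must satisfy the same scalar identity as in Step~2, which has the unique bounded solution $T=-i\tanh(\tfrac\beta2 H)$, and that any KMS state is quasifree because its truncated $n$-point functions vanish (a standard consequence of the KMS condition plus the CAR relations for a free dynamics), so I would state this and refer to the literature for the truncation-vanishing input.
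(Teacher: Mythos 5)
The paper itself offers no proof of Proposition~\ref{prop:ExistenceKMS}: it is quoted as a known result, with the preceding sentence pointing to \cite{BratteliRobinson:1997,DerezinskiGerard:2022} and, for the self-dual formulation, \cite{Araki:1971}. So your strategy of sketching the standard argument (admissible covariance, KMS check on two-point functions, quasifree/Pfaffian reduction, uniqueness via the KMS-plus-CAR recursion) while delegating the genuinely technical inputs to exactly these references is consistent with what the authors do, and your outline is the standard route.

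There is, however, a concrete error in the one computation you do spell out: $\alpha^H_{i\beta}(\Phi(\eta))\neq\Phi(e^{-\beta H}\eta)$. Since $a(\cdot)$ is antilinear, the analytic continuation of $t\mapsto\alpha^H_t(\Phi(\eta))=a^*(e^{itH}\eta)+a(e^{itH}\eta)$ is $z\mapsto a^*(e^{izH}\eta)+a(e^{i\bar z H}\eta)$, so $\alpha^H_{i\beta}(\Phi(\eta))=a^*(e^{-\beta H}\eta)+a(e^{+\beta H}\eta)$, which is not of the form $\Phi(\cdot)$: the $a^*$- and $a$-parts carry opposite exponential weights. Taken literally, your identity fails; e.g.\ if $\xi$ is a unit vector with $H\xi=h\xi$, $h\neq0$, and $\eta=i\xi$, then $\omega_\beta(\Phi(\xi)\Phi(e^{-\beta H}\eta))=i\,e^{-\beta h}\tanh(\tfrac{\beta h}{2})$ whereas $\omega_\beta(\Phi(\eta)\Phi(\xi))=-i\tanh(\tfrac{\beta h}{2})$. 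The slip is fixable: working with $a,a^*$ separately (where $\omega_\beta(a^*(f)a(g))=\langle g,(1+e^{\beta H})^{-1}f\rangle$, etc.) the verification does reduce to your scalar identity $e^{-\beta h}\bigl(1+\tanh(\tfrac{\beta h}{2})\bigr)=1-\tanh(\tfrac{\beta h}{2})$, which is correct — but this is precisely the point the authors themselves are careful about later (``the sign of $H$ is flipped because $a$ depends antilinearly on its argument'', proof of Proposition~\ref{prop:GradedFunctional_NPointFunction}), and the same care is needed when you invoke the Pfaffian reduction for higher monomials. A smaller caveat on uniqueness: your argument pins down the one-point function from $\omega(a^*((1-e^{-\beta H})\eta))=0$, which says nothing on $\ker H$; since the proposition is stated for arbitrary $H$ (including the tracial case $H=0$), the vanishing of the odd-point functions in the presence of zero modes really does require the extra input you are deferring to \cite{BratteliRobinson:1997,DerezinskiGerard:2022,Araki:1971}.
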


Note that in the tracial case $H=0$ this describes the unique tracial state $\mu_0$ on $\CAR$.

\subsection{KMS states on the CAR system}\label{section:KMS-CAR}

We now investigate the extension problem for the state $\om_\beta$ described in Proposition~\ref{prop:ExistenceKMS} to the crossed product of $({\rm CAR}(\Hil),\alpha^H)$ by the $\Zl_2$-action $\gamma_G$.

To describe this situation, we note that as a consequence of $U(t)=e^{itH}$ commuting with $G$, the underlying Hilbert space splits as a direct sum
\begin{align}
	\Hil &= \Hil_{\rm ev}\oplus \Hil_{\rm odd},
	\\
	G &= (+1) \oplus (-1),\\
	U(t) &= U_{\rm ev}(t)\oplus U_{\rm odd}(t),
\end{align}
with two unitary one-parameter groups $U_{\rm ev}(t)=e^{it\Ham_{\rm ev}}$ on $\Hil_{\rm ev}$ and $U_{\rm odd}(t)=e^{it\Ham_{\rm odd}}$ on $\Hil_{\rm odd}$.

As mentioned before, the grading $\gamma_G$ is implemented by $\Gamma(G)$ on $\Fock$. The operator $\Gamma(G)$ lies in ${\rm CAR}(\Hil)$ if and only if $\Hil_{\rm odd}$ is finite dimensional (see \cite{Araki:1968} for a self-dual version of this statement, or the proof of Proposition~\ref{lem:TwistedKMS-NonTrace} below). As $\omega_\beta$ is the unique KMS state on $({\rm CAR}(\Hil),\alpha^H)$, the GNS representation is a factor representation. We may therefore apply Lemma \ref{lem:TwistedCenterProperties} to the inner grading $\gamma = \Ad_{\Gamma(G)}$ to conclude that for $\dim\Hil_{\rm odd}<\infty$, there exist two extremal extensions $\bom_\beta^\pm$ of $\om_\beta$ to the crossed product, given by 
\begin{align}
	\bom_\beta^\pm\cong(\om_\beta,\pm\om_\beta(\,\cdot\;\Gamma(G)))
	,\qquad \dim\Hil_{\rm odd}<\infty,
\end{align}
in the notation of Theorem~\ref{thm:KMSStateCrossedProduct}. This case does need no further discussion.

In case $\dim\Hil_{\rm odd}=\infty$, we may apply Lemma~\ref{lemma:simple} because $\CAR$ is simple and get 
\begin{align}
	{\rm CAR}(\Hil) \rtimes_{\gamma_G} \Zlt\cong C^*(\CAR,\Gamma(G)).
\end{align}
According to Theorem~\ref{thm:KMSStateCrossedProduct}, the first step to understanding the KMS states of this $C^*$-algebra is to determine the $\gamma_G$-twisted KMS functionals $\rho$ of $({\rm CAR}(\Hil),\alpha^H)$. We start with the case without zero modes, i.e. $\ker H=\{0\}$. 

Twisted KMS functionals can well be unbounded, see \cite{BuchholzGrundling:2007_2,Hillier:2015} for results motivated by supersymmetry. We are however aiming for twisted KMS functionals that are continuous because they are dominated by $\om_\beta$. We therefore restrict already here to those functionals $\rho_\beta$ that at least have bounded $n$-point functions, i.e. for any $n\in\Nl$, there exists  $c_n>0$ such that 
\begin{align}
	|\rho_\beta(\Phi(\xi_1)\cdots\Phi(\xi_n))|\leq c_n\|\xi_1\|\cdots\|\xi_n\|.
\end{align}

\begin{proposition} \label{prop:GradedFunctional_NPointFunction}
	Consider the graded $C^*$-dynamical system $\CARDyng$ with $\dim\Hil_{\rm odd}=\infty$ and $\ker H=\{0\}$. Let $\rho_\beta$ be a $\gamma_G$-twisted hermitian KMS functional with bounded $n$-point functions.
	\begin{enumerate}
		\item If $0 \in \sigma(H_{\rm odd})$, then $\rho_\beta=0$.
		\item If $0 \notin \sigma(\Ham_{\rm odd})$, then 
		\begin{align}\label{eq:rhobetaquasifree}
			{\rho_\beta}|_{\CAR_0} = \rho_\beta(1)\cdot\mu_{T_\beta^{G}},\qquad 
			T_\beta^{G}=i\left(\frac{1-Ge^{\beta H}}{1+Ge^{\beta H}}\right).
		\end{align}
		In particular, $\rho_\beta$ is uniquely determined by its value $\rho_\beta(1)$.
	\end{enumerate}
\end{proposition}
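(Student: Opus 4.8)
The plan is to exploit the fact that a $\gamma_G$-twisted hermitian KMS functional $\rho_\beta$ with bounded $n$-point functions is, by the general structure of quasifree functionals established earlier, determined by its two-point function together with the quasifree (Pfaffian) recursion — but first one must show $\rho_\beta$ is quasifree. The twisted KMS condition $\rho_\beta(a\alpha^H_{i\beta}(b))=\rho_\beta(b\gamma_G(a))$ applied to field operators, combined with the CAR relations $\{\Phi(\xi),\Phi(\eta)\}=2\re\langle\xi,\eta\rangle$, should give a linear recursion relating the $n$-point function to $(n-2)$-point functions in the same way as for ordinary quasifree states; this is the standard argument (cf.\ the analysis of quasifree KMS states in \cite{BratteliRobinson:1997}) adapted to the twist by $G$. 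So the first step is: write out the twisted KMS relation for $\rho_\beta(\Phi(\xi)\Phi(\eta_1)\cdots\Phi(\eta_{n-1}))$, move $\Phi(\xi)$ through using analyticity in $t$ (the entire elements being finite linear combinations of $\Phi$'s built from vectors in the spectral subspaces of $H$), and deduce that $\rho_\beta$ satisfies the quasifree recursion with a modified two-point function. This forces $\rho_\beta|_{\CAR_0} = \rho_\beta(1)\cdot\mu_S$ for some anti-selfadjoint $S$ once we know the two-point function, and $\mu_S$ is automatically $\gamma_G$- and $\alpha^H$-invariant as recalled after Definition~\ref{def:KMStwisted}.

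The second step is to pin down $S$ from the two-point function. Applying the twisted KMS condition to $\rho_\beta(\Phi(\xi)\alpha^H_{i\beta}(\Phi(\eta)))=\rho_\beta(\Phi(\eta)\gamma_G(\Phi(\xi)))$, i.e. $\rho_\beta(\Phi(\xi)\Phi(e^{-\beta H}\eta))=\rho_\beta(\Phi(\eta)\Phi(G\xi))$ (after diagonalizing $H$ and extending by analyticity), and writing $\rho_\beta(\Phi(\xi)\Phi(\eta))/\rho_\beta(1)=\re\langle\xi,\eta\rangle+i\,\re\langle\xi,S\eta\rangle$ by the parametrization in the Proposition preceding Proposition~\ref{prop:ExistenceKMS}, one gets an operator identity. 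Using the anticommutator to symmetrize, this should read, schematically, $(1+Ge^{\beta H})(1+iS)=2$, hence $iS = \frac{1-Ge^{\beta H}}{1+Ge^{\beta H}}$, i.e. $S = T^G_\beta = i\frac{1-Ge^{\beta H}}{1+Ge^{\beta H}}$. Here the hypothesis $0\notin\sigma(H_{\rm odd})$ (together with $\ker H=\{0\}$, so $0\notin\sigma(H)$ on all of $\Hil$) is exactly what makes $1+Ge^{\beta H}$ boundedly invertible: on $\Hil_{\rm ev}$, $Ge^{\beta H}=e^{\beta H_{\rm ev}}>0$ so $1+Ge^{\beta H}\ge 1$; on $\Hil_{\rm odd}$, $Ge^{\beta H}=-e^{\beta H_{\rm odd}}$ and $-1\notin\sigma(-e^{\beta H_{\rm odd}})$ precisely because $0\notin\sigma(H_{\rm odd})$. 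One then checks $S=-S^*$ (equivalently that the Cayley-type transform of the selfadjoint $-i\log(Ge^{\beta H})$... — more simply, $\overline{\rho_\beta(\Phi(\xi)\Phi(\eta))}=\rho_\beta(\Phi(\eta)\Phi(\xi))$ gives hermiticity, which translates into $S=-S^*$).

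For part (a), the strategy is to show that if $0\in\sigma(H_{\rm odd})$ the would-be two-point function cannot be bounded, forcing $\rho_\beta(1)=0$ and hence $\rho_\beta=0$. Concretely: run the same recursion to get the operator constraint $(1+Ge^{\beta H})(1+iS)=2\rho_\beta(1)^{-1}\cdot(\text{something})$ — or rather, derive directly that $\rho_\beta(1)(1+Ge^{\beta H})$ acts as a bounded operator whose "inverse" must implement $S$; on $\Hil_{\rm odd}$ near spectral value $0$ of $H_{\rm odd}$ the operator $1+Ge^{\beta H}=1-e^{\beta H_{\rm odd}}$ has $0$ in its spectrum, so either $\rho_\beta(1)=0$, or the two-point function blows up on the corresponding spectral subspace, contradicting boundedness of the $2$-point function. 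A clean way: test the constraint on approximate eigenvectors $\eta_n$ of $H_{\rm odd}$ with eigenvalue $\to 0$; the relation $\rho_\beta(\Phi(\xi)\Phi(e^{-\beta H}\eta_n))=\rho_\beta(\Phi(\eta_n)\Phi(G\xi))$ with $e^{-\beta H}\eta_n\to\eta_n$ and $G\eta_n=-\eta_n$ yields, after using $\{\Phi(\xi),\Phi(\eta_n)\}=2\re\langle\xi,\eta_n\rangle$, a relation that forces $\rho_\beta(1)\re\langle\xi,\eta_n\rangle\to 0$ for all $\xi$, hence $\rho_\beta(1)=0$ (choosing $\xi$ with $\langle\xi,\eta_n\rangle$ bounded away from $0$, which is possible since $\dim\Hil_{\rm odd}=\infty$ lets us pick a genuine approximate eigenvector sequence). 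Then quasifreeness gives $\rho_\beta=0$ on $\CAR_0$, and continuity (domination by $\om_\beta$) is not even needed here since we are only claiming vanishing.

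**Main obstacle.** The technical heart is the bookkeeping in Step~1: justifying that the twisted KMS relation, which a priori only holds for entire analytic $a,b$, can be pushed through to yield the quasifree recursion and the operator identity for $S$ on all of $\Hil$ — one must choose the $\xi,\eta_i$ in spectral subspaces of $H$ (where $\Phi(\xi)\in\CAR_0\cap\CAR_{\alpha^H}$ and $\alpha^H_{i\beta}(\Phi(\xi))=\Phi(e^{-\beta H}\xi)$ literally), derive the identity there, and then extend by boundedness of the $n$-point functions and density of such spectral vectors. The interplay of the hypotheses $\ker H=\{0\}$ and $0\notin\sigma(H_{\rm odd})$ with the invertibility of $1+Ge^{\beta H}$ must be handled carefully on the even part separately (where no spectral gap is assumed, but positivity of $e^{\beta H_{\rm ev}}$ saves the day). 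I expect the rest — quasifreeness via the recursion, the Cayley-transform computation of $T^G_\beta$, and the $\rho_\beta(1)=0$ argument in case (a) — to be routine once the spectral-subspace setup is in place.
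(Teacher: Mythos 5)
Your overall architecture matches the paper's (twisted KMS condition plus CAR gives a recursion; the two-point function determines everything; in case (a) approximate zero-modes of $H_{\rm odd}$ force $\rho_\beta(1)=0$), but the step you build everything on is wrong as stated: $\alpha^H_{i\beta}(\Phi(\eta))\neq\Phi(e^{-\beta H}\eta)$. Since $\Phi(\eta)=a^*(\eta)+a(\eta)$ and $a$ is \emph{antilinear} in its argument, the analytic continuation acts with opposite signs on the two parts: for $H$-analytic $\eta$ one has $\alpha^H_{i\beta}(\Phi(\eta))=a^*(e^{-\beta H}\eta)+a(e^{\beta H}\eta)$, which is not a field operator of any single vector (even for an eigenvector it is $e^{-\beta\lambda}a^*(\eta)+e^{\beta\lambda}a(\eta)$). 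Consequently your key relation $\rho_\beta(\Phi(\xi)\Phi(e^{-\beta H}\eta))=\rho_\beta(\Phi(\eta)\Phi(G\xi))$ and the operator identity $(1+Ge^{\beta H})(1+iS)=2$ are false; the correct computation must split into $a^\#$, as the paper does, giving $\rho_\beta(a(\xi)a^*(\psi))=\rho_\beta(1)\langle\xi,(1+Ge^{-\beta H})^{-1}\psi\rangle$ and $\rho_\beta(a^*(\xi)a(\psi))=\rho_\beta(1)\langle\psi,(1+Ge^{\beta H})^{-1}\xi\rangle$, whose sum yields $\rho_\beta(1)\mu_{T_\beta^{G}}$, i.e. the correct identity is $(1+Ge^{-\beta H})(1+iT_\beta^{G})=2$ with $e^{-\beta H}$, not $e^{\beta H}$. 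In your sketch the right formula for $T_\beta^{G}$ only appears because two sign slips cancel (the wrong continuation, and then solving $iS=\frac{1-Ge^{\beta H}}{1+Ge^{\beta H}}$ as $S=+i(\cdots)$ rather than $-i(\cdots)$). The same flaw enters your case (a) argument, although its corrected analogue works: test $\rho_\beta(a(\xi)a^*((1+Ge^{-\beta H})\eta_n))=\langle\xi,\eta_n\rangle\rho_\beta(1)$ on approximate zero-modes $\eta_n$ of $H_{\rm odd}$, or, as the paper does, simply observe that $T_\beta^{G}$ is unbounded precisely when $0\in\sigma(H_{\rm odd})$, so boundedness of the two-point function forces $\rho_\beta(1)=0$.

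Two further gaps. First, you never show that the one-point (hence all odd-point) functions vanish; this is where $\ker H=\{0\}$ enters (the twisted KMS condition gives $\rho_\beta(a^*((1-e^{-\beta H})\eta))=0$, and injectivity of $1-e^{-\beta H}$ together with the assumed bounds kills the one-point function). This is needed both for the quasifree form in (b) and, crucially, in (a): with $\rho_\beta(1)=0$ alone the recursion only annihilates even-point functions, while odd-point functions reduce to one-point functions, so without this step you cannot conclude $\rho_\beta=0$. For the same analyticity reason the recursion itself should be derived with $a^\#(\xi_n)$ at the end of the word (not $\Phi(\xi_n)$), on vectors in $\dom\big((1\pm Ge^{-\beta H})^{-1}\big)$, and then extended by density and boundedness; note also the sign alternation $(-1)^nG$ that this produces in the coefficients. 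Second, a minor slip: $\ker H=\{0\}$ does not imply $0\notin\sigma(H)$; on $\Hil_{\rm ev}$ invertibility of $1+e^{\pm\beta H_{\rm ev}}$ comes from positivity (as you note), on $\Hil_{\rm odd}$ the densely defined inverse of $1-e^{\mp\beta H_{\rm odd}}$ needs only injectivity, and the hypothesis $0\notin\sigma(H_{\rm odd})$ is what makes $T_\beta^{G}$ bounded in case (b).
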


\begin{proof}
	To improve readability of our formulae, we will sometimes use subscript notation $\Phi_\xi=\Phi(\xi)$, $a_\xi^*=a^*(\xi)$ etc. in this proof.
	
	We begin by calculating the one- and two-point function of $\rho_\beta$. For $\eta$ entire analytic for the one-parameter group $U(t)$ (and hence $a^*_\eta$ entire analytic for $\alpha^H$), the $\gamma_G$-twisted KMS condition requires $\rho_\beta(a^*(e^{-\beta H}\eta))=\rho_\beta(1\cdot a^*(e^{-\beta H}\eta))=\rho_\beta(a^*(\eta))$ and hence ${\rho_\beta(a^*((1-e^{-\beta H})\eta))=0}$. Since $\Ham$ is injective and $\rho_\beta$ has a bounded one-point function, this implies that $\rho_\beta(a^*(\psi))=0$ for all $\psi\in\Hil$. Taking into account that $\rho_\beta$ is hermitian, we arrive at $\rho_\beta(\Phi(\psi))=0$ for all $\psi\in\Hil$.
	
	For the two-point function, the $\gamma_G$-twisted KMS condition and $\gamma_G$-invariance of $\rho_\beta$ yield
	\begin{align*}
		\rho_\beta(a^*(\xi)a^*(Ge^{-\beta H}\eta))
		&=
		\rho_\beta(a^*(\eta)a^*(\xi))
		=
		-\rho_\beta(a^*(\xi)a^*(\eta)),\\
		\rho_\beta(a(\xi)a^*(Ge^{-\beta H}\eta))
		&=
		\rho_\beta(a^*(\eta)a(\xi))
		=
		-\rho_\beta(a(\xi)a^*(\eta))+\langle\xi,\eta\rangle\cdot\rho_\beta(1).
	\end{align*}
	
	The first equation implies $\rho_\beta(a^*(\xi)a^*((1+Ge^{-\beta H})\eta))=0$. Note that the operators 
	\begin{align}
		L_G:=(1+ Ge^{-\beta H})^{-1}
	\end{align}
	 are well-defined because $\ker H=\{0\}$, $G$ commutes with $H$, and $\sigma(G)\subset\{1,-1\}$. Hence, analogous to the one-point function, $\rho_\beta(a^*(\xi)a^*(\psi))=0$ for analytic $\xi,\psi$ with $\psi\in\dom(L_G)$. As these vectors run over dense subspaces and $\rho_\beta$ has a bounded two-point function, we conclude  $\rho_\beta(a^*(\xi)a^*(\psi))=\rho_\beta(a(\xi)a(\psi))=0$ for all $\xi,\psi\in\Hil$.

	The second equation implies $\rho_\beta(a(\xi)a^*(\psi))=\rho_\beta(1)\cdot\langle\xi,L_G\psi\rangle$ for any analytic vectors $\xi,\psi$ with $\psi\in\dom(L_G)$.  Analogously, one obtains 
	\begin{equation*}
		\rho_\beta(a^*(\xi)a(\psi))=\rho_\beta(1)\cdot\langle\psi,(1+Ge^{\beta H})^{-1}\xi\rangle.
	\end{equation*}
	Here the sign of $H$ is flipped because $a$ depends antilinearly on its argument, so that $z\mapsto a(e^{i\overline z H}\xi)$ is analytic.
	
	Collecting all information, the two-point function of $\rho_\beta$ is
	\begin{align*}
		\rho_\beta(\Phi(\xi)\Phi(\psi))
		&=
		\rho_\beta(1)\cdot\left(\langle\xi,(1+Ge^{-\beta H})^{-1}\psi\rangle+\langle\psi,(1+Ge^{\beta H})^{-1}\xi\rangle\right)
		\\
		&=\rho_\beta(1)\left(\Re\langle\xi,\psi\rangle+i\Re\left\langle\xi,i\frac{1-Ge^{\beta H}}{1+Ge^{\beta H}}\psi\right\rangle\right)
		\\
		&=\rho_\beta(1)\cdot\mu_{T_\beta^{G}}(\Phi(\xi)\Phi(\psi)).
	\end{align*}
	We note that $T_\beta^{G}$ is unbounded if and only if $0\in\sigma(H_{\rm odd})$. Since $\rho_\beta$ has a bounded two-point function, we conclude $\rho_\beta(1)=0$ in this case.

	To complete the proof, we only need to show that $\rho_\beta$ is, up to the prefactor $\rho_\beta(1)$, quasifree. This argument proceeds along the same lines as in the untwisted case: Let $n \in \Nl$ and $\xi_1,\ldots,\xi_n\in\Hil$ be entire analytic. Then
	\begin{align*}
		\rho_\beta&(\Phi_{\xi_1}\cdots\Phi_{\xi_{n-1}}a^*(Ge^{-\beta H}\xi_n))
		=
		\rho_\beta(a^*(\xi_n)\Phi_{\xi_1}\cdots\Phi_{\xi_{n-1}})
		\\
		&=
		\sum_{k=1}^{n-1}(-1)^{k+1}\langle\xi_k,\xi_{n}\rangle\,\rho_\beta(\Phi_{\xi_1}\cdots\widehat{k}\cdots\Phi_{\xi_{n-1}})
		 +(-1)^{n-1}
		\rho_\beta(\Phi_{\xi_1}\cdots\Phi_{\xi_{n-1}}a^*_{\xi_n}),
	\end{align*}
	which yields the recursion relation
	\begin{align*}
		\rho_\beta(\Phi_{\xi_1}\cdots\Phi_{\xi_{n-1}}a^*_{\xi_n})
		=
		\sum_{k=1}^{n-1}(-1)^{k+n+1}\langle\xi_k,L_{\left((-1)^{n}G\right)}\xi_{n}\rangle\,\rho_\beta(\Phi_{\xi_1}\cdots\widehat{k}\cdots\Phi_{\xi_{n-1}}),
	\end{align*}
	where we have used that $L_{-G}=(1-Ge^{\beta H})^{-1}$.
	
	Proceeding analogously for $a(\xi_n)$ instead of $a^*(\xi_n)$, one arrives at
	\begin{align*}
		\rho_\beta(\Phi_{\xi_1}\cdots\Phi_{\xi_n})
		=
		\sum_{k=1}^{n-1}(-1)^{k+n+1}
		\mu_{T_\beta^{\left((-1)^{n}G\right)}}(\Phi_{\xi_k}\Phi_{\xi_{n}})\,
		\rho_\beta(\Phi_{\xi_1}\cdots\widehat{k}\cdots\Phi_{\xi_{n-1}}),
	\end{align*}
	from which we conclude that all odd-point functions vanish due to the fact that the one-point function vanishes.
	
	For even $n$, the above recursion relation is the one underlying the quasifree structure, which finishes the proof.
\end{proof}

We next consider the tracial case, $H=0$. Recall that $\mu_0$ is the unique tracial state on $\CAR$, and that a $\gamma_G$-twisted KMS functional $\tau$ satisfies the twisted trace condition $\tau(xy)=\tau(y\gamma_G(x))$.

\begin{proposition}\label{prop:tracialcase}
	Consider the graded $C^*$-algebra $({\rm CAR}(\Hil),\gamma_G)$ with \linebreak $\dim\Hil_{\rm odd}=\infty$. Then the only $\gamma_G$-twisted tracial functional $\tau$ which is dominated by $\mu_0$ is $\tau=0$. Hence, the trace $\mu_0$ of $\CAR$ has a unique tracial extension to the crossed product ${\rm CAR}(\Hil)\rtimes_{\gamma_G}\Zl_2$, namely $\boldsymbol{\tau}^{\rm can}$.
\end{proposition}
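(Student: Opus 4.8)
The plan is to reduce the tracial case to the spectral analysis already carried out for the non-tracial case, while being careful that the $H=0$ hypothesis makes the operators $L_G = (1+G e^{-\beta H})^{-1} = (1+G)^{-1}$ ill-defined on all of $\Hil$. First I would observe that a $\gamma_G$-twisted tracial functional $\tau$ dominated by $\mu_0$ is automatically bounded, hence has bounded $n$-point functions, so the recursive argument of Proposition~\ref{prop:GradedFunctional_NPointFunction} applies verbatim up to the point where injectivity of $H$ was used. Concretely, the twisted trace condition $\tau(a^*(G\eta)) = \tau(1\cdot a^*(G\eta)) = \tau(a^*(\eta))$ forces $\tau(a^*((1-G)\eta)) = 0$ for all $\eta\in\Hil$. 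Since $1-G$ acts as $0$ on $\Hil_{\rm ev}$ and as $2$ on $\Hil_{\rm odd}$, its range is exactly $\Hil_{\rm odd}$, so boundedness of the one-point function gives $\tau(a^*(\psi)) = 0$ for all $\psi\in\Hil_{\rm odd}$, but a priori nothing on $\Hil_{\rm ev}$; together with hermiticity this yields $\tau(\Phi(\psi)) = 0$ only for $\psi\in\Hil_{\rm odd}$.

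The second step is to handle the even part, and here I expect the main obstacle to lie. The key extra input, not present in the non-tracial analysis, is that the grading $\gamma_G$ acts trivially on $\CAR(\Hil_{\rm ev})$, so on that subalgebra the twisted trace condition degenerates to the ordinary trace condition; by uniqueness of the trace on the simple algebra $\CAR(\Hil_{\rm ev})$, the restriction $\tau|_{\CAR(\Hil_{\rm ev})}$ is a scalar multiple $\tau(1)\mu_0$. So the issue is purely about the ``mixed'' moments involving both even and odd creation operators, and about the overall constant $\tau(1)$. For the mixed two-point function, I would rerun the computation that led to $\tau(a^*(\xi)a^*((1+G)\eta)) = 0$ and $\tau(a(\xi)a^*((1+G)\eta)) = \langle\xi,\eta\rangle\tau(1) - \tau(a(\xi)a^*(\eta))$: taking $\eta\in\Hil_{\rm odd}$ kills the $(1+G)$ factor and forces $\tau(a^*(\xi)a^*(\eta)) = \tau(a(\xi)a^*(\eta)) = 0$ whenever $\eta\in\Hil_{\rm odd}$; taking $\eta\in\Hil_{\rm ev}$ where $1+G = 2$ gives $\tau(a^*(\xi)a^*(\eta)) = 0$ and $\tau(a(\xi)a^*(\eta)) = \tfrac12\langle\xi,\eta\rangle\tau(1)$ for $\xi$ arbitrary. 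The only case requiring attention is $\eta\in\Hil_{\rm ev}$, $\xi\in\Hil_{\rm odd}$, which however follows from the $\Hil_{\rm odd}$-vanishing just established by swapping the roles and using hermiticity plus the CAR relations.

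The final step is to pin down $\tau(1) = 0$ using $\dim\Hil_{\rm odd} = \infty$. Having shown that $\tau|_{\CAR_0}$ is quasifree with prefactor $\tau(1)$ and covariance the operator $T$ whose nonzero block is $i(1-G)/(1+G) = 0$ on $\Hil_{\rm ev}$ and (formally) $i(1-1)/(1+1)\cdots$ — wait, more carefully: on $\Hil_{\rm odd}$ the relation $\tau(a(\xi)a^*(\eta)) = 0$ together with $\tau(a^*(\xi)a(\eta)) = \langle\eta,\xi\rangle\tau(1)$ (derived as in the non-tracial case with $H=0$) shows that the quasifree functional associated to $\tau|_{\CAR(\Hil_{\rm odd})}$ has two-point function $\tau(1)\langle\eta,\xi\rangle$ for the $a^*a$ ordering and $0$ for the $aa^*$ ordering — this is the would-be ``fully occupied'' state on $\CAR(\Hil_{\rm odd})$, i.e. $\tau(1)\cdot\mu_{T}$ with $T$ having eigenvalue $-i$ (equivalently $\mu_T$ is the Fock state for the opposite complex structure), which is a \emph{pure} quasifree state. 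But a dominated functional $\tau$ with $|\tau(a^*b)|^2 \le \mu_0(a^*a)\mu_0(b^*b)$ cannot have its restriction be a nonzero multiple of a pure state disjoint from the trace unless $\dim\Hil_{\rm odd} < \infty$: evaluating the domination inequality on suitable finite products of field operators in $\Hil_{\rm odd}$ and letting the number of factors grow, the left side stays of order $|\tau(1)|$ while the right side decays like $2^{-n}$ (the trace assigns $2^{-n}$ to a product of $n$ orthonormal projections $a^*(e_k)a(e_k)$ whereas the pure state assigns $1$). Hence $\tau(1) = 0$, so $\tau = 0$. By Theorem~\ref{thm:KMSStateCrossedProduct} the only extension of $\mu_0$ to the crossed product is the canonical one $\boldsymbol{\tau}^{\rm can}$. $\qquad\square$
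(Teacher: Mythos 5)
Your opening moves match the paper's: the restriction of $\tau$ to the fixed-point algebra ${\rm CAR}(\Hil_{\rm ev})$ is a multiple of its unique trace, and the odd one-point functions vanish. The gap comes right after, in the two places where you lean on the recursion of Proposition~\ref{prop:GradedFunctional_NPointFunction}. That recursion determines higher moments only after inverting $1\pm Ge^{-\beta H}$; with $H=0$ these are $1\pm G$, which annihilate $\Hil_{\rm odd}$ resp.\ $\Hil_{\rm ev}$, so the twisted trace condition only constrains moments whose last vector lies in $\ran(1\pm G)$ and leaves the rest undetermined. Concretely, your assertion that ``taking $\eta\in\Hil_{\rm odd}$ kills the $(1+G)$ factor and forces $\tau(a^*(\xi)a^*(\eta))=\tau(a(\xi)a^*(\eta))=0$'' is not what the relations give: for $\eta\in\Hil_{\rm odd}$ the first relation is vacuous and the second reads $0=\scp{\xi}{\eta}\,\tau(1)$, which (take $\xi=\eta$) yields $\tau(1)=0$ outright but says nothing about the two-point functions themselves. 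For the same reason the key claim that $\tau|_{\CAR_0}$ is quasifree with prefactor $\tau(1)$ is not obtained by ``rerunning'' the non-tracial recursion, and it is in fact false as a general statement about dominated twisted traces: for $\dim\Hil_{\rm odd}<\infty$ the functional $\tau=\mu_0(\,\cdot\,\Gamma(G))$ is a nonzero, hermitian, $\gamma_G$-twisted trace dominated by $\mu_0$ (Cauchy--Schwarz for $\mu_0$ and $\Gamma(G)^2=1$) with $\tau(1)=\mu_0(\Gamma(G))=0$. Since nothing in your reduction to $\tau(1)$ uses $\dim\Hil_{\rm odd}=\infty$, it would apply verbatim to this example and give a contradiction; so the reduction cannot be correct.

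Consequently the endgame does not close: your only use of $\dim\Hil_{\rm odd}=\infty$ is to force $\tau(1)=0$, but that is the cheap part (it already follows from the twisted trace condition and the CAR for a single odd mode), while the proposition genuinely needs infinite dimensionality to kill all higher \emph{even} moments. This is exactly what the paper's proof supplies: for even $n$ it picks a normalized $\xi\in\Hil_{\rm odd}$ orthogonal to $\eta_1,\dots,\eta_n$ (possible only because $\dim\Hil_{\rm odd}=\infty$) and applies the twisted trace condition to $a(\xi)^*\Phi(\eta_1)\cdots\Phi(\eta_n)a(\xi)$, which together with the CAR forces $\tau(\Phi(\eta_1)\cdots\Phi(\eta_n))=0$; odd moments are handled by a recursion down to the vanishing one-point functions. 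You would need to add an argument of this kind (or some other mechanism that uses $\dim\Hil_{\rm odd}=\infty$ on the higher moments, e.g.\ via Theorem~\ref{thm:TwistedKMSFunctional} and outerness of the grading in the trace representation) before invoking Theorem~\ref{thm:KMSStateCrossedProduct} for the uniqueness of the extension.
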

\begin{proof}
	We first note that the twisted tracial functional $\tau$ restricts to an untwisted tracial functional on the fixed point algebra ${\rm CAR}(\Hil_{\rm ev})$ of $\gamma_G$. As this $C^*$-algebra has the unique tracial state $\mu_0^{\rm ev}$ and $\mu_0^{\rm ev}\pm\tau$ are tracial and positive by the domination property, we see that the restriction of $\tau$ to ${\rm CAR}(\Hil_{\rm ev})$ is a multiple of $\mu_0^{\rm ev}$. As this state is quasifree, all $n$-point functions $\tau(\Phi(\eta_1)\cdots\Phi(\eta_n))$ with $n$ odd and $\eta_1,\ldots,\eta_n\in\Hil_{\rm ev}$ vanish.
	
	For odd $n$, we may therefore without loss of generality assume $\eta_n\in\Hil_{\rm odd}$ and consider $\tau(\Phi(\eta_1)\cdots\Phi(\eta_n))$. Similar to the previous proof, the twisted trace condition yields 
	\begin{align*}
		\tau(\Phi(\eta_1)&\cdots \Phi(\eta_n))
		=
		-
		\tau(\Phi(\eta_n)\Phi(\eta_1)\cdots\Phi(\eta_{n-1}))
		\\
		&=
		2\sum_{k=1}^{n-1} (-1)^k\Re\langle\eta_k,\eta_n\rangle\,\tau(\Phi(\eta_1)\cdots\widehat{k}\cdots\Phi(\eta_{n-1}))
		-\tau(\Phi(\eta_1)\cdots\Phi(\eta_n))
	\end{align*}
	and hence a recursion formula expressing $\tau(\Phi(\eta_1)\cdots\Phi(\eta_n))$ in terms of one point functions $\tau(\Phi(\xi))$. But for $\xi\in\Hil_{\rm ev}$, we have $\tau(\Phi(\xi))=0$ as pointed out before, and for $\xi\in\Hil_{\rm odd}$, we have $\tau(\Phi(\xi))=0$ by the $G$-invariance of $\tau$. Hence all $n$-point functions with odd $n$ vanish.
	
	For even $n$, let $\eta_1,\ldots,\eta_n\in\Hil$ and choose $\xi\in\Hil_{\rm odd}$ normalized and such that $\xi\perp{\rm span}\{\eta_1,\ldots,\eta_n\}$ (this is possible thanks to $\dim\Hil_{\rm odd}=\infty$). Then the twisted trace condition and the CAR relations imply
	\begin{align*}
		\tau(a(\xi)^*\Phi(\eta_1)&\cdots\Phi(\eta_n)a(\xi))
		=
		-\tau(a(\xi)a(\xi)^*\Phi(\eta_1)\cdots\Phi(\eta_n))
		\\
		&=-\tau(\Phi(\eta_1)\cdots\Phi(\eta_n))+\tau(a(\xi)^*\Phi(\eta_1)\cdots\Phi(\eta_n)a(\xi)).
	\end{align*}
	This yields $\tau(\Phi(\eta_1)\cdots\Phi(\eta_n))=0$ and in particular $\tau(1)=0$. Thus $\tau=0$.
	
	The claim about the unique extension of $\mu_0$ is now a direct consequence of the results of Theorem~\ref{thm:KMSStateCrossedProduct}.
\end{proof}

This proof demonstrates that the tracial case is considerably simpler than the general KMS case. In particular, the extension problem for traces is already settled at this point, whereas in the case of non-trivial twisted KMS functionals on $\CAR$, the crucial question is now whether they are dominated by the unique KMS state discussed before. In particular, we have not yet clarified under which conditions $\mu_{T_\beta^G}$ is continuous, which is a consequence of $\rho_\beta(1)\neq0$ and $\rho_\beta$ being dominated by $\om_\beta$. The product
\begin{align}\label{eq:CH}
	c_{\beta H}
	:=
	\prod_{\la\in\sigma(H_{\rm odd})}\frac{1 - e^{-\abs{\beta \lambda}}}{1 + e^{-\abs{\beta \lambda}}}
\end{align}
over the spectrum $\sigma(H_{\rm odd})$ (counting with multiplicity) of the $G$-odd part $H_{\rm odd}$ of the Hamiltonian will be relevant for this.

\begin{proposition} \label{lem:TwistedKMS-NonTrace}
	Consider the graded $C^*$-dynamical system $\CARDyng$, with $\dim\Hil_{\rm odd}=\infty$ and $\ker(H)=\{0\}$. The following are equivalent:
	\begin{enumerate}
		\item $\mathcal{F}_{\beta}(\CAR, \alpha^H, \gamma_G, \omega_\beta)\neq\{0\}$,
		\item $c_{\beta H}>0$,
		\item $\Tr_{\Hil_{\rm odd}} \left(e^{-|\beta H_{\rm odd}|}\right)<\infty$.
	\end{enumerate}
	In this case, $\mu_{T_\beta^G}$ is bounded, and 
	\begin{align}\label{eq:twistedmultiples}
		\mathcal{F}_{\beta}(\CAR, \alpha^H, \gamma_G, \omega_\beta)
		&=
		\{c\cdot\mu_{T_\beta^{G}}\,:\,c\in[-c_{\beta H},c_{\beta H}]\}.
	\end{align}
\end{proposition}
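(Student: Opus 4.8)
The plan is to establish the chain of equivalences and then the explicit description of $\mathcal{F}_{\beta}(\CAR, \alpha^H, \gamma_G, \omega_\beta)$ via the twisted center $\Zen(\M,V)$ of the GNS algebra, combined with an explicit normalization computation on Fock space. Throughout, by Proposition~\ref{prop:GradedFunctional_NPointFunction}, since $\ker H=\{0\}$ and (in the relevant regime) $0\notin\sigma(H_{\rm odd})$, every hermitian $\gamma_G$-twisted KMS functional with bounded $n$-point functions is a scalar multiple of $\mu_{T_\beta^G}$. So the content of the proposition is (i) that the functionals dominated by $\om_\beta$ are \emph{exactly} the multiples $c\,\mu_{T_\beta^G}$ with $|c|\leq c_{\beta H}$, with $c_{\beta H}=0$ encoding $\mathcal{F}_\beta=\{0\}$, and (ii) the arithmetic identifications b)$\Leftrightarrow$c).

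For the equivalence b)$\Leftrightarrow$c): the product $c_{\beta H}=\prod_{\lambda\in\sigma(H_{\rm odd})}\frac{1-e^{-|\beta\lambda|}}{1+e^{-|\beta\lambda|}}$ is a product of numbers in $[0,1)$, so it is positive iff $\sum_{\lambda}\big(1-\frac{1-e^{-|\beta\lambda|}}{1+e^{-|\beta\lambda|}}\big)=\sum_\lambda\frac{2e^{-|\beta\lambda|}}{1+e^{-|\beta\lambda|}}<\infty$, and since $\frac{2e^{-|\beta\lambda|}}{1+e^{-|\beta\lambda|}}$ is comparable to $e^{-|\beta\lambda|}$ (bounded above and below by it up to a factor of $2$), this is equivalent to $\sum_{\lambda\in\sigma(H_{\rm odd})}e^{-|\beta\lambda|}=\Tr_{\Hil_{\rm odd}}(e^{-|\beta H_{\rm odd}|})<\infty$. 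This also forces $\sigma(H_{\rm odd})$ to be discrete with the relevant multiplicities finite, so the trace notation and the product over eigenvalues make sense. This step is routine.

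The core is a)$\Leftrightarrow$b) together with \eqref{eq:twistedmultiples}. The strategy: first construct, assuming $\Tr_{\Hil_{\rm odd}}(e^{-|\beta H_{\rm odd}|})<\infty$, an explicit realization of the crossed product on Fock space à la Lemma~\ref{lemma:Gibbs}. Write $\Hil_{\rm odd}=\bigoplus_j\Hil_j$ as finite-dimensional $H$-eigenspaces; then $\Gamma(G)$ restricted to the Fock space over $\Hil_{\rm odd}$ has a one-dimensional fixed space and, crucially, the operator $e^{-\beta\,d\Gamma(H_{\rm odd})}\Gamma(G)$ is trace class on $\Fock(\Hil_{\rm odd})$ precisely under the summability condition, with normalized trace computing exactly $c_{\beta H}$ — this is the computation $\Tr_{\Fock(\Hil_{\rm odd})}\!\big(e^{-\beta d\Gamma(H_{\rm odd})}\Gamma(G)\big)/\Tr_{\Fock(\Hil_{\rm odd})}\!\big(e^{-\beta d\Gamma(H_{\rm odd})}\big)=\prod_\lambda\frac{1-e^{-\beta\lambda}}{1+e^{-\beta\lambda}}$, which after taking absolute values of the eigenvalues gives $c_{\beta H}$. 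From this one reads off that $\om_\beta(\,\cdot\;\Gamma(G))$ (interpreted in the GNS representation of $\om_\beta$, where $\Gamma(G)$ is affiliated/bounded appropriately) is a hermitian $\gamma_G$-twisted KMS functional dominated by $\om_\beta$ with $\rho_\beta(1)=0$ but $\|\rho_\beta\|$ governed by $c_{\beta H}$; more precisely, the operator $R\in\Zen(\M,V)$ implementing $\mu_{T_\beta^G}$ (via Theorem~\ref{thm:TwistedKMSFunctional}) has $\|R\|=1$ and $\langle\Om,\pi(1)R\Om\rangle$-type normalization yielding that the \emph{largest} scalar $c$ for which $c\,\mu_{T_\beta^G}$ is dominated by $\om_\beta$ equals $c_{\beta H}=\mu_{T_\beta^G}$-value one would compute, so $\mathcal{F}_\beta=[-c_{\beta H},c_{\beta H}]\cdot\mu_{T_\beta^G}$. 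Conversely, if the summability fails, $T_\beta^G$ is unbounded, so by Proposition~\ref{prop:GradedFunctional_NPointFunction} any dominated (hence bounded, hence bounded-$n$-point) twisted KMS functional must have $\rho_\beta(1)=0$ and thus be $0$; this gives $\neg$c)$\Rightarrow\neg$a).

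The main obstacle I anticipate is the precise bookkeeping in the normalization computation: identifying the operator norm of $R_{\mu_{T_\beta^G}}$ (equivalently, the extreme value of the interval) with the \emph{product} $c_{\beta H}$ rather than some other spectral quantity, and handling the limit over the increasing family of finite-dimensional eigenspaces so that the infinite-product formula is rigorous (dominated convergence / monotonicity of the finite partial traces). One must also be careful that in the non-factor-on-$\Hil_{\rm odd}$ bookkeeping the grading $\Ad_{\Gamma(G)}$ is genuinely inner on $\M$ exactly when the condition holds (matching Theorem~\ref{thm:AbstractGNS_Split} and Lemma~\ref{lem:TwistedCenterProperties}), whereas when it fails $\Gamma(G)\notin\M$ and — by the argument in the proof of Proposition~\ref{lem:TwistedKMS-NonTrace} itself, i.e.\ by showing no nonzero element of $\Zen(\M,V)$ exists — $\Ad_{\Gamma(G)}$ acts freely, so that Corollary~\ref{cor:FreelyActing} gives $\Zen(\M,V)=\{0\}$ and hence $\mathcal{F}_\beta=\{0\}$. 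Assembling these pieces, b)$\Rightarrow$a) via the explicit construction, a)$\Rightarrow$c) via Proposition~\ref{prop:GradedFunctional_NPointFunction}, c)$\Leftrightarrow$b) via the elementary product estimate, and \eqref{eq:twistedmultiples} via the normalization computation and Theorem~\ref{thm:TwistedKMSFunctional}, completes the proof.
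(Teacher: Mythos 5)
Your reduction of b)$\Leftrightarrow$c) to the elementary product--sum estimate is fine, and leaning on Proposition~\ref{prop:GradedFunctional_NPointFunction} to know that any candidate functional is a multiple of $\mu_{T_\beta^G}$ is the right starting point. But there are two genuine gaps. First, your argument for $\neg$c)$\Rightarrow\neg$a) is wrong: you claim that failure of $\Tr_{\Hil_{\rm odd}}(e^{-|\beta H_{\rm odd}|})<\infty$ forces $T_\beta^G$ to be unbounded, but $T_\beta^G$ restricted to $\Hil_{\rm odd}$ is $-i\coth(\tfrac{\beta}{2}H_{\rm odd})$, which is unbounded only when $0\in\sigma(H_{\rm odd})$. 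The summability can fail with $0\notin\sigma(H_{\rm odd})$ --- e.g.\ continuous spectrum bounded away from zero (this is exactly the Ising case $H=m\cosh\theta$ that the paper needs), or a non-summable discrete spectrum such as an eigenvalue of infinite multiplicity. In these cases $\mu_{T_\beta^G}$ is a perfectly finite quasifree functional and boundedness of $n$-point functions rules out nothing; what is needed is the paper's domination argument: testing $|\rho_\beta(A^*A)|\le\om_\beta(A^*A)$ and $|\rho_\beta(AA^*)|\le\om_\beta(AA^*)$ on $A=a(\xi_1)\cdots a(\xi_n)$ with suitably chosen $\xi_k\in\Hil_{\rm odd}$, which first shows that any infinite-dimensional spectral subspace $E^{\beta H}([0,r])\Hil_{\rm odd}$ or $E^{\beta H}([-r,0])\Hil_{\rm odd}$ forces $\rho_\beta(1)=0$ (so $\sigma(H_{\rm odd})$ must be discrete with finite multiplicities), and then yields $0<|\rho_\beta(1)|\le\prod_{k\le n}\tfrac{1-e^{-|\beta\lambda_k|}}{1+e^{-|\beta\lambda_k|}}$, i.e.\ $c_{\beta H}>0$. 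This step is entirely absent from your proposal, and your appeal to ``the argument in the proof of Proposition~\ref{lem:TwistedKMS-NonTrace} itself'' to show free action of $\Ad_{\Gamma(G)}$ is circular.

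Second, your construction for c)$\Rightarrow$a) via a Gibbs realization is not available under the stated hypotheses. The condition involves only $|H_{\rm odd}|$: the operator $e^{-\beta\,d\Gamma(H_{\rm odd})}$ has $\Tr=\prod_k(1+e^{-\beta\lambda_k})$, which diverges whenever infinitely many $\beta\lambda_k$ are negative, so $e^{-\beta d\Gamma(H_{\rm odd})}\Gamma(G)$ need not be trace class even when c) holds, and ``taking absolute values of the eigenvalues'' in the trace formula is not justified without a particle--hole (Bogoliubov) transformation; moreover $H_{\rm ev}$ may have continuous spectrum, so no Gibbs density matrix exists on the full Fock space and one would additionally need a graded tensor-product splitting. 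The paper sidesteps all of this by working inside the abstract algebra: it sets $u_n=\prod_{k\le n}\sgn(\beta\lambda_k)e^{i\pi a^*(\varphi_k)a(\varphi_k)}$ (the sign factors handle the negative modes), checks $\om_\beta(u_nu_m)\to1$, $\om_\beta(u_n)\to c_{\beta H}$ and $\|u_nau_n-\gamma_G(a)\|\to0$ on a dense $*$-subalgebra, and invokes Proposition~\ref{prop:weaklyinnergrading} to obtain $\rho_\beta(a)=\lim_n\om_\beta(au_n)$ with $\rho_\beta(1)=c_{\beta H}\neq0$. Finally, the sharp interval in \eqref{eq:twistedmultiples} does not need your normalization computation in the GNS representation (which as sketched presupposes the trace-class picture): the bound $|\rho_\beta(1)|\le c_{\beta H}$ is exactly the inequality already derived in a)$\Rightarrow$b), and it is attained by the constructed functional.
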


\begin{proof}
	a) $\Rightarrow$ b): Let $\rho_\beta\in\mathcal{F}_{\beta}(\CAR, \alpha^H, \gamma_G, \omega_\beta)\neq\{0\}$, $n\in\Nl$ and $\xi_1,\ldots,\xi_n\in\Hil_{\rm odd}$ be mutually orthogonal unit vectors such that $$\langle\xi_j,(1+ e^{-\beta H})^{-1}\xi_k\rangle=0=\langle\xi_j,(1- e^{-\beta H})^{-1}\xi_k\rangle,   \quad \forall j\neq k.$$Such vectors exist because of $\dim\Hil_{\rm odd}=\infty$. In view of the quasifree structure of $\om_\beta$ and $\rho_\beta$, the operator  $A:=a(\xi_1)\cdots a(\xi_n)$ satisfies
	\begin{align*}
		\rho_\beta(A^*A)
		&=
		\rho_\beta(1)\prod_{k=1}^n\rho_\beta(a(\xi_k)^*a(\xi_k))
		=
		\rho_\beta(1)\prod_{k=1}^n\langle \xi_k,(1-e^{\beta H})^{-1}\xi_k\rangle
		,\\
		\rho_\beta(AA^*)
		&=
		\rho_\beta(1)\prod_{k=1}^n\langle \xi_k,(1-e^{-\beta H})^{-1}\xi_k\rangle
		,\\
		\om_\beta(A^*A)
		&=
		\prod_{k=1}^n\om_\beta(a(\xi_k)^*a(\xi_k))
		=
		\prod_{k=1}^n\langle \xi_k,(1+e^{\beta H})^{-1}\xi_k\rangle
		,\\
		\om_\beta(AA^*)
		&=
		\prod_{k=1}^n\langle \xi_k,(1+e^{-\beta H})^{-1}\xi_k\rangle
		.
	\end{align*}
	As $\om_\beta$ dominates $\rho_\beta$, we have $|\rho_\beta(A^*A)|\leq\om_\beta(A^*A)$ and $|\rho_\beta(AA^*)|\leq\om_\beta(AA^*)$, i.e.
	\begin{align}\label{eq:specest}
		|\rho_\beta(1)|
		\leq
		\prod_{k=1}^n \frac{\langle \xi_k,(1+e^{\pm\beta H})^{-1}\xi_k\rangle}{|\langle \xi_k,(1-e^{\pm\beta H})^{-1}\xi_k\rangle|}
		\leq
		\prod_{k=1}^n \frac{1}{|\langle \xi_k,(1-e^{\pm\beta H})^{-1}\xi_k\rangle|}
		,
	\end{align}
	provided $\langle \xi_k,(1-e^{\pm\beta H})^{-1}\xi_k\rangle\neq0$. 
	
	These inequalities hold for both signs $\pm$ and can be used to obtain spectral information about the Hamiltonian $\beta H$. We denote its spectral projections by $E^{\beta H}$. Suppose that there exists $r>0$ such that $\Hil_{\rm odd}^{r,+}:=E^{\beta H}([0,r])\Hil_{\rm odd}$ is infinite-dimensional. Then, for any $n\in\Nl$, we may choose the vectors $\xi_1,\ldots,\xi_n\in\Hil_{\rm odd}^{r,+}$, and obtain the estimate $$|\langle\xi_k,(1-e^{-\beta H})^{-1}\xi_k\rangle|=\langle\xi_k,(1-e^{-\beta H})^{-1}\xi_k\rangle\geq(1-e^{-r})^{-1}.$$
	Inserting this into the rightmost term in \eqref{eq:specest} with the lower sign now yields $|\rho_\beta(1)|\leq(1-e^{-r})^{n}$. As this estimate holds for all $n$, we conclude $\rho_\beta(1)=0$.
	
	An analogous argument works in case $\Hil_{\rm odd}^{r,-}:=E^{\beta H}([-r,0])\Hil_{\rm odd}$ is infinite-dimensional. In this case one estimates $$|\langle\xi_k,(1-e^{\beta H})^{-1}\xi_k\rangle|=\langle\xi_k,(1-e^{\beta H})^{-1}\xi_k\rangle\geq(1-e^{-r})^{-1}.$$Inserting this into \eqref{eq:specest} with the upper sign again results in $\rho_\beta(1)=0$.
	
	As $\rho_\beta(1)=0$ implies the contradiction $\rho_\beta=0$, we see that $E^{\beta H}([-r,r])\Hil_{\rm odd}$ is finite-dimensional for all $r>0$. This argument shows already that $\sigma(H_{\rm odd})$ consists only of eigenvalues with finite multiplicity and no finite accumulation point. Let us denote the eigenvalues $(\la_k)_{k\in\Nl}$ (repeated according to multiplicity) and choose the $\xi_k$ to be normalized corresponding eigenvectors of $H_{\rm odd}$. Then the first inequality in \eqref{eq:specest} yields

	\begin{equation} \label{eq:ProdIneq}
	0<\abs{\rho_\beta(1)} \leq \prod_{k=1}^{n} \abs{\frac{1 -e^{\pm\beta \lambda_k}}{1 + e^{\pm\beta \lambda_k}}} = \prod_{k=1}^{n} \frac{1 - e^{-\abs{\beta \lambda_k}}}{1 + e^{-\abs{\beta \lambda_k}}}
	\end{equation}
	for all $n\in\Nl$. Hence the infinite product $\prod_{k=1}^{\infty} \frac{1 - e^{-\abs{\beta \lambda_k}}}{1 + e^{-\abs{\beta \lambda_k}}}$ does not converge to $0$, i.e. $c_{\beta H}>0$.
	
	b) $\Leftrightarrow$ c) By standard estimates translating infinite products into infinite sums, one obtains $c_{\beta H}>0\Leftrightarrow \Tr_{\Hil_{\rm odd}} \left(e^{-|\beta H_{\rm odd}|}\right)=\sum_{k=1}^\infty e^{-|\beta \lambda_k|}<\infty$.
	
	c) $\Rightarrow$ a): The idea of this part of the proof is to use Proposition~\ref{prop:weaklyinnergrading}. We choose an orthonormal basis $(\varphi_k)$ of $H_{\rm odd}$ with corresponding eigenvalues $\la_k$ and consider the operators $P_k := a^*(\phi_k)a(\phi_k)$. The anticommutation relations imply that the $P_k$ are mutually commuting orthogonal projections satisfying 
	\begin{equation*}
		e^{i \pi P_k} = 1 - 2P_k = i \Phi(i \phi_k)\Phi(\phi_k).
	\end{equation*}
	The unitaries implementing the weakly inner grading from Proposition~\ref{prop:weaklyinnergrading} are here taken as
	\begin{align*}
		u_n := \prod_{k=1}^n \sgn(\beta\la_k)e^{i\pi P_k}.
	\end{align*}
	They are clearly selfadjoint as well as invariant under the grading and under the dynamics. In view of the quasifree structure of $\om_\beta$, we have 
	\begin{align*}
		\om_\beta(u_n) 
		&=
		i^n\om_\beta\left(\prod_{k=1}^n\sgn(\beta\la_k)\Phi(i\varphi_k)\Phi(\varphi_k)\right)
		\\
		&=
		i^n\prod_{k=1}^n\sgn(\beta\la_k)\om_\beta\left(\Phi(i\varphi_k)\Phi(\varphi_k)\right)
		=
		\prod_{k=1}^n\tanh\frac{|\beta\la_k|}{2}.
	\end{align*}
	This product converges to the finite non-zero value $c_{\beta H}$, as verified in b) $\Leftrightarrow$ c). It is then clear that for $n<m$, we have $\om_\beta(u_nu_m)=\prod_{k=n+1}^m\tanh\frac{|\beta\la_k|}{2}\to1$ as $n,m\to\infty$, verifying another assumption of Proposition~\ref{prop:weaklyinnergrading}.
	
	It remains to show that $\Ad_{u_n}$ approximates the grading. As a suitable subalgebra, we take $\A_0$ to be the ${}^*$-algebra generated by the $\Phi(\varphi_k)$, $k\in\Nl$, and $\Phi(\xi)$, $\xi\in\Hil_{\rm ev}$ analytic for $U_{\rm ev}$. This algebra satisfies the assumptions of Proposition~\ref{prop:weaklyinnergrading}. Moreover, we have 
	\begin{align}
		u_n\Phi(\varphi_m)u_n
		&=
		\begin{cases}
			-\Phi(\varphi_m) & m\leq n\\
			+\Phi(\varphi_m) & m>n
		\end{cases}
		,
		\qquad 
		u_n\Phi(\xi)u_n = \Phi(\xi),\quad \xi\in\Hil_{\rm ev}.
	\end{align}
	This yields $\|\Ad_{u_n}(a)-\gamma_G(a)\|\to0$ for any $a\in\A_0$.
	
	We may therefore apply Proposition~\ref{prop:weaklyinnergrading} to conclude that \linebreak $\rho_\beta(a)  :=\lim_n\om_\beta(au_n)$ lies in $\mathcal{F}_{\beta}(\CAR, \alpha^H, \gamma_G, \omega_\beta)$. As $\rho_\beta(1)=c_{\beta H}\neq0$, this proves a). 
	
	By application of Proposition \ref{prop:GradedFunctional_NPointFunction}, we also get $\rho_\beta=\rho_\beta(1)\,\mu_{T_\beta^G}$. According to the estimate \eqref{eq:ProdIneq}, $c_{\beta H}$ is an upper bound for $|\rho_\beta(1)|$ for any $\rho_\beta\in \mathcal{F}_{\beta}(\CAR, \alpha^H, \gamma_G, \omega_\beta)$. Therefore this upper bound is attained, and \eqref{eq:twistedmultiples} follows.
\end{proof}

In the self-dual setting, Hillier studied boundedness questions for twisted KMS-functionals in \cite{Hillier:2015}. His Theorem A.4 appears to be related to some parts of our analysis above. As it is formulated in the context of the self-dual algebra, we however preferred to work with the direct argument given above.

\medskip 

We do not need a concrete realization of the GNS representation of $\om_\beta$ for our analysis. For completeness, let us still point out that this representation can be realized as (see  \cite{ArakiWyss:1964, Araki:1971, BaumgartelJurkeLledo:2002}) as
\begin{align*}
\Fock_\beta& = \Fock \otimes \overline{\Fock}, \quad \Om_\beta = \Om \ot \overline{\Om}, \quad V= \Gamma(G) \ot \overline{\Gamma(G)}, \\
a_\beta^*(\phi) & :=  \pi_\beta(a^*(\phi)) = a^*(\sqrt{T}\phi)\otimes \overline{1} + (-1)^N \otimes \overline{a(\sqrt{1 -T}\phi)}, \\
a_\beta(\phi) & := \pi_\beta(a(\phi)) = a(\sqrt{T}\phi) \otimes \overline{1} + (-1)^N\otimes \overline{a^*(\sqrt{1-T}\phi)},
\end{align*}
where $T = (1 + e^{- \beta \Ham})^{-1}$ and $\overline{\Fock}$ denotes the complex conjugate Hilbert space of $\Fock$. The modular conjugation of the enveloping von Neumann algebra $\M$ is given by $J = [(-1)^\frac{N(N-1)}{2}\ot \overline{(-1)^\frac{N(N-1)}{2}}]F$, where $F$ denotes the tensor flip. In the notation of the proof of Proposition~\ref{lem:TwistedKMS-NonTrace}, the product
\begin{equation*}
	R := \prod_{k=1}^\infty\sgn(\beta \lambda_k) e^{i \pi a_\beta^*(\phi_k) a_\beta(\phi_k)}
\end{equation*}
converges weakly. It is selfadjoint, unitary and $R \in \Zen(\M, V)$ if and only if $0 \notin \sigma(\Ham_{\rm odd})$ and $\Tr_{\Hil_{\rm odd}} \left(e^{-|\beta H_{\rm odd}|}\right)<\infty$ and vanishes otherwise. As the KMS state $\omega_{\beta}$ is unique, $\M$ is factor and therefore, Corollary~\ref{cor:one-dimensional} can be applied. This shows  $\Zen(\M,V) = R \cdot \Cl$.

We summarize our results on the CAR system in the following theorem. The proof follows directly from Proposition~\ref{lem:TwistedKMS-NonTrace} in connection with Theorem~\ref{thm:KMSStateCrossedProduct}.

\begin{theorem} \label{thm:CrossedProduct_CAR_KMSStates}
	The crossed product of the graded $C^*$-dynamical system \linebreak $\CARDyng$ (with $\ker(H)=\{0\}$ and $\dim\Hil_{\rm odd}=\infty$) has a unique KMS state at inverse temperature $\beta$, namely the canonical extension $\bomcanb$ of $\om_\beta$ \eqref{eq:QuasiFreeKMS}, if and only if the Gibbs type condition
	\begin{align}\label{eq:GibbsType}
		\Tr_{\Hil_{\rm odd}}(e^{-|\beta H_{\rm odd}|})<\infty
	\end{align}
	is violated. If \eqref{eq:GibbsType} holds, the crossed product has two extremal KMS states given via the bijection in Theorem~\ref{thm:KMSStateCrossedProduct} by
	\begin{align}
		\bom^{\pm}_\beta = (\bomcanb,\pm c_{\beta H}\cdot\mu_{T_\beta^G}),
	\end{align}
	with $\mu_{T_\beta^G}$ the quasifree functional \eqref{eq:rhobetaquasifree} and $c_{\beta H}>0$ the number defined in \eqref{eq:CH}.
\end{theorem}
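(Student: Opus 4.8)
The plan is to obtain the theorem as a synthesis of the abstract bijection in Theorem~\ref{thm:KMSStateCrossedProduct} and the complete description of the twisted KMS functionals in Proposition~\ref{lem:TwistedKMS-NonTrace}, once the relevant KMS state on $\CAR(\Hil)$ has been pinned down. First I would note that, by Proposition~\ref{prop:ExistenceKMS}, $({\rm CAR}(\Hil),\alpha^H)$ has a single $(\alpha^H,\beta)$-KMS state $\om_\beta=\mu_{T_\beta}$; since $\gamma_G$ commutes with $\alpha^H$, the state $\om_\beta\circ\gamma_G$ is again $(\alpha^H,\beta)$-KMS, hence equals $\om_\beta$, so $\om_\beta$ is automatically $\gamma_G$-invariant and $\mathcal{S}_\beta(\CAR,\alpha^H,\gamma_G)=\{\om_\beta\}$.

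Next I would apply Theorem~\ref{thm:KMSStateCrossedProduct} with $\A=\CAR(\Hil)$, $\alpha=\alpha^H$, $\gamma=\gamma_G$. Because the only $\gamma_G$-invariant KMS state is $\om_\beta$, the bijection there specializes to an affine bijection
\begin{align*}
	\KMS(\bA,\balpha)\ \simeq\ \{(\om_\beta,\rho)\ :\ \rho\in\mathcal{F}_\beta(\CAR,\alpha^H,\gamma_G,\om_\beta)\},\qquad \bom(a,b)=\om_\beta(a)+\rho(b),
\end{align*}
so the KMS states of the crossed product are in affine bijection with $\mathcal{F}_\beta(\CAR,\alpha^H,\gamma_G,\om_\beta)$, with $\rho=0$ corresponding to $\bomcanb=\om_\beta\circ E$. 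If \eqref{eq:GibbsType} fails, then by the equivalence (a)$\Leftrightarrow$(c) of Proposition~\ref{lem:TwistedKMS-NonTrace} the set $\mathcal{F}_\beta(\CAR,\alpha^H,\gamma_G,\om_\beta)$ is $\{0\}$, and the crossed product has the single KMS state $(\om_\beta,0)=\bomcanb$; conversely, if the crossed product has a unique KMS state, then $\mathcal{F}_\beta$ must be $\{0\}$, so \eqref{eq:GibbsType} fails. This gives the claimed equivalence.

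It remains to treat the case where \eqref{eq:GibbsType} holds. Here Proposition~\ref{lem:TwistedKMS-NonTrace} gives $c_{\beta H}>0$ and $\mathcal{F}_\beta(\CAR,\alpha^H,\gamma_G,\om_\beta)=\{c\,\mu_{T_\beta^G}:c\in[-c_{\beta H},c_{\beta H}]\}$, a nondegenerate line segment since $c_{\beta H}>0$ and $\mu_{T_\beta^G}\neq0$ (it is a normalized quasifree functional). Its two extreme points are $\pm c_{\beta H}\,\mu_{T_\beta^G}$; as the correspondence above is an affine bijection onto $\KMS(\bA,\balpha)$, it carries extreme points to extreme points, so the extremal KMS states of the crossed product are exactly $\bom^{\pm}_\beta=(\bomcanb,\pm c_{\beta H}\,\mu_{T_\beta^G})$ in the notation of Theorem~\ref{thm:KMSStateCrossedProduct}, with $\mu_{T_\beta^G}$ as in \eqref{eq:rhobetaquasifree} and $c_{\beta H}$ as in \eqref{eq:CH}. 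I do not expect a genuine obstacle here: the theorem is an assembly of results already established, and the only points needing a sentence of justification are the automatic $\gamma_G$-invariance of $\om_\beta$ and the affineness — hence extreme-point preservation — of the bijection from Theorem~\ref{thm:KMSStateCrossedProduct}.
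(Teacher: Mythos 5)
Your proposal is correct and follows essentially the same route as the paper, whose proof consists precisely of combining Theorem~\ref{thm:KMSStateCrossedProduct} with Proposition~\ref{lem:TwistedKMS-NonTrace} (together with the uniqueness of $\om_\beta$ from Proposition~\ref{prop:ExistenceKMS}). The two small points you flag --- the automatic $\gamma_G$-invariance of $\om_\beta$ and the fact that the bijection preserves convex combinations, hence extreme points --- are indeed the only details needed, and both are already noted in the paper's remarks following Theorem~\ref{thm:KMSStateCrossedProduct}.
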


Observe that in case $\beta H_{\rm odd}$ is bounded below and the grading is the canonical one, $G=-1$, \eqref{eq:GibbsType} implies the actual Gibbs condition $\Tr_{\Hil}(e^{-\beta H})<\infty$. Hence in this case, the second quantized Hamiltonian also satisfies the Gibbs condition, namely $\Tr_{\mathcal{F}_-(\Hil)}(e^{-\beta d\Gamma(H)})<\infty$ \cite[Proposition~5.2.22]{BratteliRobinson:1997}. We can therefore use the same idea as in Lemma~\ref{lemma:Gibbs} and obtain a Gibbs extension $\bomGibbsb$ of $\bom_\beta$ to the crossed product. Since
\begin{align*}
	\bomGibbsb((-1)^N)
	=
	\prod_{\la\in\sigma(H)} \frac{1 - e^{- \beta \la}}{1+ e^{- \beta \la}}
	=
	\bom^+_\beta((-1)^N)
	,
\end{align*}
this extension coincides with $\bom^+_\beta$.

\medskip

This concludes our analysis of the KMS states of the crossed product for the CAR system. We have not discussed the most general case of non-trivial kernel $\ker{H}\neq \{0\}$ which mixes the tracial case $H=0$ with the case $\ker H=\{0\}$. We expect that this can be done efficiently by splitting $\CAR$ into a graded tensor product \cite{CrismaleDuvenhageFidaleo:2021, ArakiMoriya:2003} with the factors corresponding to $H=0$ and $\ker{H}=\{0\}$, respectively.

\section{Examples from mathematical physics}\label{section:physics}

Crossed products appear in several places in mathematical quantum physics, for example in the context of chemical potential in gauge theories \cite{ArakiKastlerTakesakiHaag:1977}, or in the context of the lattice Ising model \cite{ArakiEvans:1983}. In the latter situation, one considers the CAR algebra for the lattice $\Zl$ (generated by $a^\#(\varphi_n)$, $n\in\Zl$) and the grading given by $G\varphi_n=\eps(n)\varphi_n$ with $\eps(n)=1$ for $n\geq1$ and $\eps(n)=-1$ otherwise. This is an example of a non-canonical grading $G$ as considered in the previous section.

In lattice or continuum models with positive Hamiltonian $H$ and positive inverse temperature $\beta>0$, the Gibbs type condition \eqref{eq:GibbsType} coincides with the usual Gibbs condition from statistical mechanics. By adding a potential term to the Hamiltonian with suitable growth rate at infinity, one then has many examples in which \eqref{eq:GibbsType} is satisfied, sometimes only for small enough $\beta$ (existence of maximal temperature). A detailed investigation of the KMS states of such models and their interpretation will appear elsewhere.

Here we restrict ourselves to consider another example, a relativistic quantum field theory on two-dimensional Minkowski space-time which can be described by $\Zl_2$-crossed products in a non-obvious manner. The model we are considering is often referred to as the Ising QFT. It arises from a scaling limit of the two-dimensional Ising lattice model \cite{McCoyTracyWu:1977, SatoMiwaJimbo:1978} and turns out to be a completely integrable QFT with two-particle S-matrix equal $-1$ and fits into a larger family of Fermionic models \cite{BostelmannCadamuro:2021}.

To see its relation with $\Zl_2$-crossed products, we will describe it from an operator algebraic perspective in which the connection with the Ising lattice model is no longer relevant. The Ising QFT belongs to a family of integrable QFTs that can be analyzed in the framework of algebraic quantum field theory in its vacuum representation (zero temperature). We refer to the review for detailed information about this construction \cite{Lechner:AQFT-book:2015}, and briefly explain the main points here.

The Hilbert space of the model is the Fermi Fock space $\Fock$ over a one particle space carrying the irreducible unitary positive energy representation $U$ of the proper orthochronous Poincaré group $\CP_0(2)$ in two dimensions with mass $m>0$ and spin zero, i.e. $\Hil=L^2(\Rl,d\te)$. With $p(\te):=m(\cosh\te,\sinh\te)$, the group elements $(x,\la)$ (with $x\in\Rl^2$ denoting space-time translations and $\la$ the rapidity parameter of a Lorentz boost) are represented as
\begin{align}
	(U(x,\la)\psi)(\te)=e^{ip(\te)\cdot x}\psi(\te-\la).
\end{align}
In particular, the Hamiltonian of this representation is
\begin{align}\label{eq:IsingHam}
	(H\psi)(\te)=m\cosh(\te)\cdot\psi(\te).
\end{align}
As before, we will write $\alpha^H$ to denote the dynamics given by adjoint action of the second quantization of $U((t,0),0)=e^{itH}$ on $\B(\Fock)$.

The construction of the QFT proceeds with the help of the fields $\Phi$ introduced in \eqref{def:Phi} and a localization structure. To describe it, we consider the map 
\begin{align}
 {\mathscr S}(\Rl^2) \ni f \longmapsto \hat f\in\Hil,\qquad \hat f(\te):=\tilde f(p(\te)),
\end{align}
where $\tilde f$ is the Fourier transform of $f$. For any open subset $\CO\subset\Rl^2$, one then forms the $C^*$-algebras 
\begin{align}
	\CC(\CO) := C^*(\Phi(\hat f)\,:\,f\in C_{c,\Rl}^\infty(\CO)) \subset \B(\Fock).
\end{align}
This construction provides us with a net of $C^*$-algebras transforming covariantly under the second quantization of $U$, namely $\CC(\CO_1)\subset\CC(\CO_2)$ for $\CO_1\subset\CO_2$, and $\Gamma(U(g))\CC(\CO)\Gamma(U(g^{-1}))=\CC(g\CO)$ for all $g\in\CP_0(2)$. Moreover, the Fock vacuum~$\Om$ is cyclic for $\CC(\CO)$ for all open non-empty $\CO$.

The net $\CO\mapsto\CC(\CO)$ is however not local, i.e. $\CC(\CO_1)$ and $\CC(\CO_2)$ do not commute for $\CO_1$ spacelike to $\CO_2$. This can be understood as a consequence of the Spin-Statistics Theorem, since the field~$\Phi$ is build from the CAR but $U$ has spin zero. The local content of the Ising QFT is uncovered by realizing a hidden locality property. Namely, for the particular Rindler wedge region ${\mathcal W}:=\{(x_0,x_1)\in\Rl^2\,:\,x_1>|x_0|\}$, one finds that the vacuum $\Om$ is also separating for $\CC({\mathcal W})$. Thus Tomita-Takesaki theory applies to the pair $(\CC({\mathcal W})'',\Om)$, and the modular conjugation $J_{\mathcal W}$ turns out to be given by \cite{BuchholzLechner:2004}
\begin{align*}
	J_{\mathcal W} = \Gamma(J)\,(-1)^{N(N-1)/2},
\end{align*}
where $J\xi=\overline\xi$ is pointwise complex conjugation on $\Hil=L^2(\Rl)$. Therefore the second field operator $\Phi'$, defined as
\begin{align}\label{eq:PhiPrime}
	\Phi'(\xi) := J_{\mathcal W}\Phi(J\xi)J_{\mathcal W} = (a^*(\xi)-a(\xi))\,(-1)^N,
\end{align}
generates the commutant $\CC({\mathcal W})'=\{\Phi'(\hat f)\,:\,f\in C_{c,\Rl}^\infty({\mathcal W}')\}$. Its commutation relations with $\Phi$ are given by
\begin{align}\label{eq:PhiPhiComm}
	[\Phi(\xi),\Phi'(\eta)]
	=
	2i\,\Im\langle\xi,\eta\rangle\,(-1)^N,
\end{align}
which vanishes for $\xi=\hat f$, $\eta=\hat g$ with the supports of $f$ and $g$ spacelike separated. These observations lead to a local QFT by assigning to a double cone $\CO_{x,y}:=(-{\mathcal W}+x)'\cap({\mathcal W}+y)'$ (with dashes denoting causal complements) the von Neumann algebra
\begin{align}
	\A(\CO_{xy})
	:=
	\CC'(-{\mathcal W}+x)'\cap\CC({\mathcal W}+y)'.
\end{align}
This defines a quantum field theory satisfying all axioms of quantum field theory in its vacuum representation, i.e. at temperature zero. In particular, the vacuum is cyclic and separating for each $\A(\CO_{xy})$, the proof of which relies on the split property \cite{Lechner:2005}. It is interacting with S-matrix $(-1)^{N(N-1)/2}$ and asymptotically complete \cite{Lechner:2008}.

To connect with crossed products, we consider the two global field algebras $\CC(\Rl^2)$ and $\CC'(\Rl^2)$ generated by the fields $\Phi$ and $\Phi'$, respectively, and the extended global field algebra
\begin{equation*}
	\widehat{\CC}:={C^*}(\CC(\Rl^2),\CC'(\Rl^2)),
\end{equation*}
which contains both fields $\Phi$ and $\Phi'$.

\begin{theorem}
	The field algebras of the two fields $\Phi,\Phi'$ defining the Ising QFT are $\CC(\Rl^2)=\CC'(\Rl^2)=\CAR$, and the extended field algebra is
	\begin{align}
		\widehat{\CC}=C^*(\CAR,(-1)^N)\cong{\rm CAR}(\Hil)\rtimes\Zl_2,
	\end{align}
	where the crossed product is taken w.r.t. the canonical grading $\Ad_{(-1)^N}$.

	At each inverse temperature $\beta\neq0$, the $C^*$-dynamical system $(\widehat{\CC},\alpha^H)$ consisting of the extended field algebra $\widehat\CC$ and the dynamics given by the one particle Hamiltonian $H$ \eqref{eq:IsingHam} has a unique KMS state.
\end{theorem}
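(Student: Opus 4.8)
The statement has two halves: an algebraic identification --- that $\widehat{\CC}$ is the $\Zl_2$-crossed product of ${\rm CAR}(L^2(\Rl))$ by the canonical grading, carrying the dynamics $\alpha^H$ --- and the uniqueness of the KMS state. The plan is to establish the identification by hand and then read off uniqueness from Theorem~\ref{thm:CrossedProduct_CAR_KMSStates}.

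For the identification I would first record that the real-linear span of $\{\hat f : f\in C_{c,\Rl}^\infty(\Rl^2)\}$ is dense in $\Hil = L^2(\Rl,d\te)$. This is the familiar Reeh--Schlieder-type input: a vector real-orthogonal to all translates $e^{-ip(\cdot)\cdot x}\hat f$ would force a distribution supported on the forward mass shell $p(\Rl)$ and one on the backward shell $-p(\Rl)$ to cancel, which is impossible for $m>0$ unless the vector vanishes, the $\hat f$ being restrictions of entire functions. As $\Phi$ is norm-continuous with $\|\Phi(\xi)\|=\|\xi\|$ and $a^*(\xi) = \tfrac12(\Phi(\xi) - i\Phi(i\xi))$, this gives $\CC(\Rl^2) = C^*(\Phi(\xi):\xi\in\Hil) = \CAR$, and likewise $\CC'(\Rl^2) = C^*(\Phi'(\xi):\xi\in\Hil)$ with the $\Phi'(\xi)$ satisfying the CAR relations~\eqref{eq:CAR} by~\eqref{eq:PhiPrime}. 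Next I would exhibit $(-1)^N$ inside $\widehat{\CC}$: choosing $f$ real with $\|\hat f\|=1$ and $\xi:=\hat f$, the definitions~\eqref{def:Phi}, \eqref{eq:PhiPrime} and the CAR relations give the elementary identity $\Phi(\xi)\Phi'(\xi) = (1 - 2a^*(\xi)a(\xi))(-1)^N$, hence $(-1)^N = (1 - 2a^*(\xi)a(\xi))\,\Phi(\xi)\Phi'(\xi)\in\widehat{\CC}$. Together with $\CAR=\CC(\Rl^2)\subseteq\widehat{\CC}$ and $\Phi'(\xi) = (a^*(\xi)-a(\xi))(-1)^N\in C^*(\CAR,(-1)^N)$ this yields $\widehat{\CC} = C^*(\CAR,(-1)^N)$. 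Since $\CAR\subset\B(\Fock)$ is simple and unital, $(-1)^N$ is a selfadjoint unitary with $(-1)^N\CAR(-1)^N = \gamma_{-1}(\CAR) = \CAR$, and $(-1)^N\notin\CAR$ (for the canonical grading $\Hil_{\rm odd}=\Hil$ is infinite-dimensional), Lemma~\ref{lemma:simple} gives $\widehat{\CC}\cong\CAR\rtimes_{\Ad_{(-1)^N}}\Zl_2 = \CAR\rtimes_{\gamma_{-1}}\Zl_2$. As $\Gamma(e^{itH})$ commutes with $\Gamma(-1) = (-1)^N$, the dynamics $\alpha^H$ on $\widehat{\CC}$ is the canonically extended dynamics on this crossed product, so $(\widehat{\CC},\alpha^H)$ is precisely the CAR system of Section~\ref{section:CAR} with $G = -1$, whose unique KMS state on $\CAR$ is the quasifree state $\om_\beta$ of Proposition~\ref{prop:ExistenceKMS}.

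Finally I would apply Theorem~\ref{thm:CrossedProduct_CAR_KMSStates}. The Ising one-particle Hamiltonian $H\psi(\te) = m\cosh(\te)\psi(\te)$ from~\eqref{eq:IsingHam} satisfies $H\ge m>0$, so $\ker H = \{0\}$; and for $G = -1$ one has $\Hil_{\rm odd} = \Hil = L^2(\Rl)$ infinite-dimensional with $H_{\rm odd} = H$. Thus the hypotheses of Theorem~\ref{thm:CrossedProduct_CAR_KMSStates} hold, and it remains to check that the Gibbs-type condition $\Tr_{\Hil_{\rm odd}}(e^{-|\beta H_{\rm odd}|}) < \infty$ \emph{fails} for every $\beta\ne 0$. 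But $e^{-|\beta H_{\rm odd}|}$ is multiplication by $e^{-|\beta|m\cosh\te}$ on $L^2(\Rl,d\te)$, with spectrum the interval $[0,e^{-|\beta|m}]$ and no eigenvalues, hence not even compact, let alone trace class. Therefore $\widehat{\CC}$ has exactly one KMS state at each $\beta\ne 0$, namely $\bomcanb$. (Equivalently one may use that $\alpha^H$ is norm-asymptotically abelian on $\CAR$, since $\langle\eta,e^{itH}\xi\rangle\to 0$ as $t\to\infty$ by a Riemann--Lebesgue/stationary-phase estimate for the phase $m\cosh\te$; then $\gamma_{-1}\ne\id$ and the extremality of $\om_\beta$ let Corollary~\ref{cor:asymptoticallyabelian} give $\mathcal{F}_\beta(\CAR,\alpha^H,\gamma_{-1},\om_\beta)=\{0\}$, and Theorem~\ref{thm:KMSStateCrossedProduct} yields the same conclusion.)

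The main obstacle I anticipate is the algebraic first part: making rigorous that $\Phi$ generates $\CAR$, that $\Phi'$ differs from it only through the Klein factor $(-1)^{N(N-1)/2}$ hidden in $J_{\mathcal W} = \Gamma(J)(-1)^{N(N-1)/2}$, and that $(-1)^N$ lies in $\widehat{\CC}$, all require care with the conventions for the Fermi field and the second quantization functor, together with the density statement for one-particle vectors on the mass shell. Once $\widehat{\CC}\cong\CAR\rtimes_{\gamma_{-1}}\Zl_2$ with dynamics $\alpha^H$ is secured, uniqueness of the KMS state follows immediately from Theorem~\ref{thm:CrossedProduct_CAR_KMSStates}, the only additional ingredient being the elementary fact that $M_{m\cosh\te}$ has purely continuous spectrum.
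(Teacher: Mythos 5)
Your proposal is correct and follows essentially the same route as the paper: identify $\CC(\Rl^2)=\CAR$ by density of the one-particle vectors $\hat f$, exhibit $(-1)^N$ inside $\widehat\CC$ from the interplay of $\Phi$ and $\Phi'$ (the paper reads it off the commutator \eqref{eq:PhiPhiComm}, you from the product $\Phi(\xi)\Phi'(\xi)$ — equivalent), invoke simplicity of $\CAR$ with Lemma~\ref{lemma:simple} to get the crossed product, and apply Theorem~\ref{thm:CrossedProduct_CAR_KMSStates} since $H=m\cosh\theta$ has purely continuous spectrum, so the Gibbs-type condition \eqref{eq:GibbsType} fails. Your alternative uniqueness argument via graded asymptotic abelianness and Corollary~\ref{cor:asymptoticallyabelian} is a valid extra, but not needed.
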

\begin{proof}
	The inclusion $\CC(\Rl^2)\subset\CAR$ holds by definition. Passing to complex linear test functions and limits, one notes $a^*(\hat f)+a(\hat{\overline{f}})\in\CC(\Rl^2)$ for any (complex) $f\in{\mathscr S}(\Rl^2)$. Hence one can choose $f$ in such a way that $\hat{\overline f}=0$, and obtain a dense set of vectors $\hat f\in\Hil$ in this way. This shows the opposite inclusion $\CAR\subset\CC(\Rl^2)$.

	In view of the definition of $\Phi'$, we clearly have $\CC'(\Rl^2)\cong\CC(\Rl^2)$.

	The extended global field algebra $\widehat{\CC}$ contains $\CAR$ and, in view of \eqref{eq:PhiPhiComm}, also the grading operator $(-1)^N$. Hence $\widehat\CC=C^*(\CAR,(-1)^N)$.

	As $\dim\Hil=\infty$, this grading operator is not contained in $\CAR$ \cite{Araki:1971}. Taking into account that $\CAR$ is simple, we may apply Lemma~\ref{lemma:simple} and conclude $C^*(\CAR,(-1)^N)\cong{\rm CAR}(\Hil)\rtimes \Zl_2$.

	The statement about existence and uniqueness of KMS states follows by application of Theorem~\ref{thm:CrossedProduct_CAR_KMSStates}: The Hamiltonian $H$ \eqref{eq:IsingHam} has continuous spectrum and therefore violates the Gibbs type condition \eqref{eq:GibbsType}. Hence the unique $(\alpha^H,\beta)$-KMS state of $\CAR$ has a unique extension to $\widehat\CC$.
\end{proof}

We note that all $n$-point functions of the KMS state $\bom_\beta$ of $\widehat\CC$ (in both field types) can be read off from our construction in Section~\ref{section:CAR}. It should furthermore be noted that $\widehat\CC$ contains many even local observables, because the even part of $\A(\CO_{xy})$ is generated by even polynomials in the field $\Phi$ \cite{BuchholzSummers:2007}. Nonetheless, the field algebra $\widehat\CC$ differs from the quasilocal $C^*$-algebra $\mathfrak{A}$ of the Ising QFT in its odd elements, so that the KMS state constructed here describes only parts of the Ising QFT at finite temperature $\beta^{-1}>0$. In conclusion, we note that also the quasilocal $C^*$-algebra ${\mathfrak A}$ can be shown to have KMS states by combining the results of \cite{BuchholzJunglas:1989,Lechner:2005}. We leave a more detailed investigation of their relation to a future work.

\section*{Acknowledgments}

We would like to thank Kang Li for stimulating discussions about crossed products. Support by the German Research Foundation DFG through the Heisenberg project ``Quantum Fields and Operator Algebras'' (LE 2222/3-1) is gratefully acknowledged.


\begin{thebibliography}{99}
	{}
	\bibitem[AE83]{ArakiEvans:1983}
	H. Araki and D. E. Evans. “{On a C*-algebra approach to phase transition
		in the two-dimensional Ising model}”. \emph{Comm. Math. Phys.} 91
	(1983), 489–503.
	{}
	\bibitem[AM03]{ArakiMoriya:2003}
	H. Araki and H. Moriya. “{Joint extension of states of subsystems for a
		CAR system}”. \emph{Commun. Math. Phys.} 237 (2003), 105–122.
	{}
	\bibitem[Ara+77]{ArakiKastlerTakesakiHaag:1977}
	H. Araki, D. Kastler, M. Takesaki, and R. Haag. “{Extension of KMS
		states and chemical potential}”. \emph{Comm. Math. Phys.} 53.2 (1977),
	97–134.
	{}
	\bibitem[Ara68]{Araki:1968}
	H. Araki. “{On the diagonalization of a bilinear Hamiltonian by a
		Bogoliubov transformation}”. \emph{Publ. Res. Inst. Math. Sci. Kyoto
		University, Ser. A} 4 (1968), 387–412.
	{}
	\bibitem[Ara71]{Araki:1971}
	H. Araki. “{On quasifree states of CAR and Bogoliubov automorphisms}”.
	\emph{Publ. Res. Inst. Math. Sci. Kyoto} 6 (1971), 385–442. \textsc{doi}:
	\href {https://doi.org/10.2977/prims} {\nolinkurl {10.2977/prims}}.
	{}
	\bibitem[AW64]{ArakiWyss:1964}
	H. Araki and W. Wyss. “{Representations of canonical anticommutation
		relations}”. \emph{Helv. Phys. Acta} 37 (1964), 136.
	{}
	\bibitem[BC21]{BostelmannCadamuro:2021}
	H. Bostelmann and D. Cadamuro. “{Fermionic integrable models and
		graded Borchers triples}”. \emph{arXiv preprint} (2021). \textsc{doi}: \href
	{https://doi.org/10.48550/arXiv.2112.14686} {\nolinkurl
		{10.48550/arXiv.2112.14686}}.
	{}
	\bibitem[BG07]{BuchholzGrundling:2007_2}
	D. Buchholz and H. Grundling. “{Algebraic Supersymmetry: A case
		study}”. \emph{Comm. Math. Phys.} 272 (2007), 699–750. \textsc{doi}:
	\href {https://doi.org/10.1007/s00220-006-0177-z} {\nolinkurl
		{10.1007/s00220-006-0177-z}}.
	{}
	\bibitem[BJ89]{BuchholzJunglas:1989}
	D. Buchholz and P. Junglas. “{On the Existence of Equilibrium States in
		Local Quantum Field Theory}”. \emph{Comm. Math. Phys.} 121 (1989),
	255–270.
	{}
	\bibitem[BJL02]{BaumgartelJurkeLledo:2002}
	H. Baumgärtel, M. Jurke, and F. Lledó. “{Twisted duality of the CAR
		algebra}”. \emph{J. Math. Phys.} 43 (2002), 4158–4179. \textsc{doi}: \href
	{https://doi.org/10.1063/1.1483376} {\nolinkurl {10.1063/1.1483376}}.
	{}
	\bibitem[BL04]{BuchholzLechner:2004}
	D. Buchholz and G. Lechner. “{Modular nuclearity and localization}”.
	\emph{Ann. Henri Poincaré} 5 (2004), 1065–1080. \textsc{doi}: \href
	{https://doi.org/10.1007/s00023-004-0190-8} {\nolinkurl
		{10.1007/s00023-004-0190-8}}.
	{}
	\bibitem[BL99]{BuchholzLongo:1999}
	D. Buchholz and R. Longo. “{Graded KMS functionals and the breakdown
		of supersymmetry}”. \emph{Adv. Theor. Math. Phys.} 3 (1999), 615–626.
	{}
	\bibitem[BMV68]{BalslevManuceauVerbeure:1968}
	E. Balslev, J. Manuceau, and A. Verbeure. “{Representations of
		anticommutation relations and Bogolioubov transformations}”.
	\emph{Comm. Math. Phys.} 8 (1968), 315–326.
	{}
	\bibitem[BR97]{BratteliRobinson:1997}
	O. Bratteli and D. W. Robinson. \emph{{Operator Algebras and
			Quantum Statistical Mechanics II}}. Springer, 1997.
	{}
	\bibitem[BS07]{BuchholzSummers:2007}
	D. Buchholz and S. J. Summers. “{String- and brane-localized fields in a
		strongly nonlocal model}”. \emph{J. Phys.} A40 (2007), 2147–2163.
	\textsc{doi}: \href {https://doi.org/10.1088/1751-8113} {\nolinkurl
		{10.1088/1751-8113}}.
	{}
	\bibitem[CDF21]{CrismaleDuvenhageFidaleo:2021}
	V. Crismale, R. Duvenhage, and F. Fidaleo. “{$C^*$-Fermi systems and
		detailed balance}”. \emph{Anal. Math. Phys.} 11 (2021). 44 pages.
	\textsc{doi}: \href {https://doi.org/10.1007/s13324-020-00412-0} {\nolinkurl
		{10.1007/s13324-020-00412-0}}.
	{}
	\bibitem[CT21]{ChristensenThomsen:2020}
	J. Christensen and K. Thomsen. “{KMS states on crossed products by
		abelian groups}”. \emph{Math. Scand.} 127.3 (2021). \textsc{doi}: \href
	{https://doi.org/10.48550/arXiv.2006.14443} {\nolinkurl
		{10.48550/arXiv.2006.14443}}.
	{}
	\bibitem[DG22]{DerezinskiGerard:2022}
	J. Dereziski and C. Gérard. \emph{{Mathematics of Quantization and
			Quantum Fields}}. Cambridge University Press, 2022.
	{}
	\bibitem[DKR66]{DoplicherKastlerRobinson:1966}
	S. Doplicher, D. Kastler, and D. W. Robinson. “{Covariance Algebras in
		Field Theory and Statistical Mechanics }”. \emph{Comm. Math. Phys.} 3
	(1966), 1–28.
	{}
	\bibitem[Hil15]{Hillier:2015}
	R. Hillier. “{Super-KMS functionals for graded-local conformal nets}”.
	\emph{Ann. Henri Poincaré} 16 (2015), 1899–1936.
	{}
	\bibitem[JLW89]{JaffeLesniewskiWisniowski:1989}
	A. Jaffe, A. Lesniewski, and M. Wisniowski. “{Deformations of super-KMS
		functionals}”. \emph{Comm. Math. Phys.} 121 (1989), 527–540.
	{}
	\bibitem[Kal69]{Kallman:1969}
	R. R. Kallman. “{A generalization of free action}”. \emph{Duke
		Mathematical Journal} 36.4 (1969), 781–789. \textsc{doi}: \href
	{https://doi.org/10.1215/S0012-7094-69-03693-X} {\nolinkurl
		{10.1215/S0012-7094-69-03693-X}}.
	{}
	\bibitem[Kas89]{Kastler:1989}
	D. Kastler. “{Cyclic cocycles from graded KMS functionals}”.
	\emph{Comm. Math. Phys.} 121 (1989), 345–350.
	{}
	\bibitem[Lec05]{Lechner:2005}
	G. Lechner. “{On the existence of local observables in theories with a
		factorizing S-matrix}”. \emph{J. Phys.} A38 (2005), 3045–3056. \textsc{doi}:
	\href {https://doi.org/10.1088/0305-4470} {\nolinkurl {10.1088/0305-4470}}.
	{}
	\bibitem[Lec08]{Lechner:2008}
	G. Lechner. “{Construction of Quantum Field Theories with Factorizing
		{S}-Matrices}”. \emph{Comm. Math. Phys.} 277 (2008), 821–860.
	\textsc{doi}: \href {https://doi.org/10.1007/s00220-007-0381-5} {\nolinkurl
		{10.1007/s00220-007-0381-5}}.
	{}
	\bibitem[Lec15]{Lechner:AQFT-book:2015}
	G. Lechner. “{Algebraic Constructive Quantum Field Theory: Integrable
		Models and Deformation Techniques}”. \emph{{Advances in Algebraic
			Quantum Field Theory}}. Ed. by R. Brunetti et al. Springer, 2015, 397–449.
	{}
	\bibitem[LN03]{LacaNeshveyev:2003}
	M. Laca and S. Neshveyev. \emph{{KMS states of quasi-free dynamics on
			Pimsner algebras}}. Apr. 2003.
	{}
	\bibitem[LZ24]{LiZhang:2024}
	K. Li and J. Zhang. \emph{{Tracial states on groupoid $C^*$-algebras
			and essential freeness}}. Jan. 2024. \textsc{doi}: \href
	{https://doi.org/10.48550/arXiv.2401.15546} {\nolinkurl
		{10.48550/arXiv.2401.15546}}.
	{}
	\bibitem[MTW77]{McCoyTracyWu:1977}
	B. M. McCoy, C. A. Tracy, and T. T. Wu. “{Two-Dimensional Ising Model
		as an Exactly Solvable Relativistic Quantum Field Theory: Explicit Formulas
		for n Point Functions}”. \emph{Phys. Rev. Lett.} 38 (1977), 793–796.
	\textsc{doi}: \href {https://doi.org/10.1103/PhysRevLett.38.793}
	{\nolinkurl {10.1103/PhysRevLett.38.793}}.
	{}
	\bibitem[Nes14]{Neshveyev:2014}
	S. Neshveyev. “{KMS states on the C*-algebras of non-principal
		groupoids}”. \emph{J. Operator Theory} 70.2 (2014). 12 pages; subsumes
	the earlier preprint "Traces on crossed products" arXiv:1010.0600; minor
	correction of the version published in JOT, 513–530. \textsc{doi}: \href
	{https://doi.org/10.48550/arXiv.1106.5912} {\nolinkurl
		{10.48550/arXiv.1106.5912}}.
	{}
	\bibitem[NS22]{NeshveyevStammeier:2022}
	S. Neshveyev and N. Stammeier. “{The groupoid approach to equilibrium
		states on right LCM semigroup {$C^*$}-algebras}”. \emph{J. Lond. Math.
		Soc.} 105.1 (2022), 220–250.
	{}
	\bibitem[SMJ78]{SatoMiwaJimbo:1978}
	M. Sato, T. Miwa, and M. Jimbo. “{Holonomic quantum fields I}”.
	\emph{Publ. Res. Inst. Math. Sci.} 14.1 (1978), 223–267.
	{}
	\bibitem[Tak03]{Takesaki:2003}
	M. Takesaki. \emph{{Theory of Operator Algebras II}}. Springer, 2003.
	{}
	\bibitem[Tho23]{Thomsen:2023}
	K. E. Thomsen. “{An introduction to KMS weights}”. \emph{arXiv
		preprint} (2023). \textsc{doi}: \href
	{https://doi.org/10.48550/arXiv.2204.01125} {\nolinkurl
		{10.48550/arXiv.2204.01125}}.
	{}
	\bibitem[Urs21]{Ursu:2021}
	D. Ursu. “{Characterizing traces on crossed products of noncommutative
		C*-algebras}”. \emph{Adv. Math.} 391 (2021), 107955. \textsc{doi}: \href
	{https://doi.org/10.1016/j.aim.2021.107955} {\nolinkurl
		{10.1016/j.aim.2021.107955}}.
	{}
	\bibitem[Wil07]{Williams:2007}
	D. P. Williams. \emph{{Crossed products of C*-algebras}}. Providence,
	Rhode Island: American Mathematical Society, 2007.
\end{thebibliography}
\end{document}